\newcommand{\vc}[1]{\boldsymbol{#1}}
\newcommand{\ind}{\mathds{1}}
\newcommand{\inner}[2]{\langle #1, #2\rangle}
\renewcommand{\div}[1]{\text{div}\,(#1)}
\newcommand{\ext}[1]{(#1 dS)^\vee}
\newcommand{\conj}[1]{\overline{#1}\,}
\newcommand{\supp}{\text{supp}\,}
\newcommand{\Rs}{\mathbb{R}}
\newcommand{\XU}[1]{X^{#1}_{\zeta(U,\tau)}}
\newcommand{\Xb}[1]{X^{#1}_{\zeta(1),1/\tau}}
\DeclarePairedDelimiter\abs{\lvert}{\rvert}
\DeclarePairedDelimiter\norm{\lVert}{\rVert}
\DeclarePairedDelimiter\japan{\langle}{\rangle}
\def\Xint#1{\mathchoice
{\XXint\displaystyle\textstyle{#1}}%
{\XXint\textstyle\scriptstyle{#1}}%
{\XXint\scriptstyle\scriptscriptstyle{#1}}%
{\XXint\scriptscriptstyle\scriptscriptstyle{#1}}%
\!\int}
\def\XXint#1#2#3{{\setbox0=\hbox{$#1{#2#3}{\int}$ }
\vcenter{\hbox{$#2#3$ }}\kern-.6\wd0}}
\def\dashint{\Xint-}
\theoremstyle{plain}
  \newtheorem{theorem}{Theorem}
  \newtheorem{lemma}[theorem]{Lemma}
\theoremstyle{definition}  
  \newtheorem{definition}[theorem]{Definition}
  \newtheorem{example}[theorem]{Example}
\newenvironment{customthm}[1]
  {\innercustomthm}
  {\endinnercustomthm}
\title{The Bilinear Strategy for Calder\'on's Problem}
\author[ ]{Felipe Ponce-Vanegas.}
\affil[ ]{\small BCAM - Basque Center for Applied Mathematics}
\affil[ ]{\tt fponce@bcamath.org}
\date{}
\begin{document}
\maketitle

\begin{abstract}
Electrical Impedance Imaging would suffer a serious obstruction if for two different conductivities the potential and current measured at the boundary were the same. The Calder\'on's problem is to decide whether the conductivity is indeed uniquely determined by the data at the boundary. In $\Rs^d$, for $d=5,6$, we show that uniqueness holds when the conductivity is in $W^{1+\frac{d-5}{2p}+,p}(\Omega)$, for $d\le p<\infty$. This improves on recent results of Haberman, and of Ham, Kwon and Lee. The main novelty of the proof is an extension of Tao's bilinear Theorem.  
\end{abstract}

\section{Introduction}

Electrical Impedance Imaging is a technique to reconstruct the inner structure of a body from measurements of potential and current at the boundary. At least since the 30', geophysicists have used this technique to identify different layers of earth underground \cite{SBB}. In pioneering work, Calder\'on \cite{MR590275} posed the problem of deciding whether the conductivity is uniquely determined by measurements at the boundary. Calder\'on went on to show uniqueness, roughly, when the conductivity is close to one.

The electrical potential $u$ in a bounded domain $\Omega\subset\Rs^d$ with Lipschitz boundary satisfies the differential equation 
\begin{equation}\label{eq:BVP}
\begin{aligned}
L_{\gamma}u&:=\div{\gamma\nabla u} = 0, \\
u|_{\partial\Omega} &= f,
\end{aligned}
\end{equation}
where $\gamma$ is the conductivity and $f$ the potential at the boundary. We assume that $\gamma\in L^\infty(\Omega)$ and that $\gamma\ge c>0$. If $f\in H^{1/2}(\partial\Omega)$, then a solution $u\in H^1(\Omega)$ exists. The electrical current at the boundary is $\gamma\partial_\nu u\mid_{\partial\Omega}$, where $\nu$ is the outward-pointing normal, and the operator $\Lambda_\gamma: u|_{\partial\Omega}\mapsto\gamma\partial_\nu u\mid_{\partial\Omega}$ is called the Dirichlet-to-Neumann map; we can define the map $\Lambda_\gamma$ rigorously as
\begin{equation}\label{eq:Symmetry_DN}
\inner{\Lambda_{\gamma}f}{g} := \int_{\Omega} \gamma \nabla u\cdot\nabla \bar{v},  
\end{equation}
where $u$ solves \eqref{eq:BVP} and $v\in H^1(\Omega)$ is \textit{any} extension of $g\in H^{1/2}(\partial\Omega)$; hence $\Lambda_{\gamma}:H^{1/2}(\partial\Omega)\mapsto H^{-1/2}(\partial\Omega)$. If we choose $v$ such that $L_{\gamma}v=0$, then we see that $\Lambda_{\gamma}$ is symmetric. Uniqueness fails if two different conductivities $\gamma_1$ and $\gamma_2$ satisfy $\Lambda_{\gamma_1}=\Lambda_{\gamma_2}$; this were the case, for every $f_1,f_2\in H^\frac{1}{2}(\partial\Omega)$ we would have
\begin{equation}\label{eq:Density_Conductivity}
0=\inner{(\Lambda_{\gamma_1}-\Lambda_{\gamma_2})f_1}{f_2}=\int_{\Omega} (\gamma_1-\gamma_2)\nabla u_1\cdot\nabla \conj{u}_2,
\end{equation}
where $L_{\gamma_1}u_1=0$ and $L_{\gamma_2}u_2=0$ are extensions of $f_1$ and $f_2$ respectively. Most of the proofs of uniqueness show that the collection of functions $\{\nabla u_1\cdot\nabla \conj{u}_2\}$ is dense, so $\gamma_1$ and $\gamma_2$ cannot be different. 

Kohn and Vogelius \cite{MR739921} showed that for smooth conductivities $\gamma_1$ and $\gamma_2$, uniqueness holds at the boundary to all orders, so $\partial_\nu^N\gamma_1=\partial_\nu^N\gamma_2$ at $\partial\Omega$ for every integer $N$. In particular, if the conductivities are analytic, then $\gamma_1=\gamma_2$ in $\Omega$. 

In \cite{MR873380}, Sylvester and Uhlmann introduced the method that most of the proofs follow nowadays. If $u_j$ solve the equation \eqref{eq:BVP} for $\gamma_j$, then the function $w_j:=\gamma_j^\frac{1}{2}u_i$ solves the equation $(-\Delta + q_j)w_j=0$ with $q_j=\gamma_j^{-\frac{1}{2}}\Delta\gamma_j^\frac{1}{2}$, and the relationship \eqref{eq:Density_Conductivity} is replaced by
\begin{equation}\label{eq:Density_Laplace}
\int_{\Rs^d}(q_1-q_2)w_1w_2 = 0;
\end{equation}
then, they had to prove that the collection of function $\{w_1w_2\}$ is dense. The integral is evaluated over $\Rs^d$ because the functions $\gamma_1$ and $\gamma_2$ are extended to the whole space, and are arranged so that $\gamma_1=\gamma_2=1$ outside a ball containing $\Omega$. Since $e^{\zeta\cdot x}$ is harmonic when $\zeta\in\mathbb{C}^d$ satisfies $\zeta\cdot\zeta=0$, then they used the ansatz $w_j=e^{\zeta_j\cdot x}(1+\psi_j)$, expecting that $\psi_j$ is somehow negligible for $\abs{\zeta_1},\abs{\zeta_2}\to\infty$. These solutions $w_j$ are called Complex Geometrical Optics (CGO) solutions. Sylvester and Uhlmann selected $\zeta_1$ and $\zeta_2$ such that $\zeta_1+\zeta_2=i\xi$ for $\xi\in\Rs^d$; then, on the assumed smallness of $\psi_j$ for $\abs{\zeta_1},\abs{\zeta_2}\to\infty$, equation \eqref{eq:Density_Laplace} means that $\widehat{q}_1=\widehat{q}_2$, and this implies that $\gamma_1=\gamma_2$. Their argument works well for conductivities in $C^2(\Omega)$. 

In $\Rs^2$, Astala and P\"{a}iv\"{a}rinta \cite{MR2195135} proved that uniqueness holds in $L^\infty(\Omega)$, the best possible result. In higher dimensions, Brown \cite{MR1393424} proved uniqueness for conductivities in $C^{\frac{3}{2}+}(\Omega)$, and this was improved to $W^{\frac{3}{2}, 2d+}(\Omega)$ by Brown and Torres \cite{MR2026763}. By analogy with unique continuation, it is conjectured that the lowest possible regularity is $W^{1,d}(\Omega)$. 

The function $\psi$ in the CGO solution $w=e^{\zeta\cdot x}(1+\psi)$ satisfies the equation
\begin{equation}\label{eq:Definition_Delta_zeta}
\Delta_\zeta \psi:= \Delta\psi+2\zeta\cdot\nabla\psi = q(1+\psi).
\end{equation}
Then, it is necessary to prove that a solution exists and is small. In \cite{MR3024091}, Haberman and Tataru introduced a Bourgain-type space adapted to $p_\zeta(\xi)=-\abs{\xi}^2+2i\zeta\cdot\xi$, the symbol of $\Delta_\zeta$. The space is defined as
\begin{equation*}
\dot{X}^b_{\zeta} :=\{u\mid \norm{u}_{\dot{X}^b_{\zeta}}^2:=\int_{\Rs^d} \abs{p_\zeta(\xi)}^{2b}\abs{\widehat{u}}^2\,d\xi<\infty\},
\end{equation*}
and it follows immediately that $\norm{\Delta_\zeta^{-1}}_{\dot{X}^{-\frac{1}{2}}_{\zeta}\to \dot{X}^\frac{1}{2}_{\zeta}}=1$. The dual of $\dot{X}^b_{\zeta}$ is $\dot{X}^{-b}_{\zeta}$. If we define the multiplication operator $M_q: u\mapsto qu$, then the existence of $\psi$ follows from $\norm{\Delta^{-1}_\zeta M_q}_{\dot{X}^\frac{1}{2}_{\zeta}\to \dot{X}^\frac{1}{2}_{\zeta}}\le \norm{M_q}_{\dot{X}^\frac{1}{2}_{\zeta}\to \dot{X}^{-\frac{1}{2}}_{\zeta}}\le c<1$, and the smallness of $\psi$ follows from the smallness of $\norm{q}_{\dot{X}^{-\frac{1}{2}}_{\zeta}}$. Using these spaces Haberman and Tataru proved uniqueness for Lipschitz conductivities close to one.

Caro and Rogers \cite{MR3456182} proved uniqueness for Lipschitz conductivities without further restriction. They used Carleman estimates, in the spirit of \cite{MR2299741} and \cite{MR2534094}.

After an observation in \cite{NS}, Haberman refined in \cite{MR3397029} the method of Bourgain spaces, and proved uniqueness for conductivities in $W^{1,3}(\Omega)$ for $d=3$, and $W^{1+\frac{d-4}{2p}, p}(\Omega)$ for $p\ge d$ and $d=4,5,6$. He argued as follows: for $\gamma_1$ and $\gamma_2$ he wanted to show that $\norm{M_{q_j}}_{\dot{X}^\frac{1}{2}_{\zeta_j}\to \dot{X}^{-\frac{1}{2}}_{\zeta_j}}$ and $\norm{q_j}_{\dot{X}^{-\frac{1}{2}}_{\zeta_j}}$ are small for some $\zeta_1$ and $\zeta_2$ that satisfy $\zeta_1+\zeta_2=i\xi$, so Haberman proved that there exist sequences $\{\zeta_{1,k}\}$ and $\{\zeta_{2,k}\}$ for which $\norm{M_{q_j}}_{\dot{X}^\frac{1}{2}_{\zeta_{j,k}}\to \dot{X}^{-\frac{1}{2}}_{\zeta_{j,k}}}$ and $\norm{q_j}_{\dot{X}^{-\frac{1}{2}}_{\zeta_{j,k}}}$ tend to zero as $\abs{\zeta_{1,k}},\abs{\zeta_{2,k}}\to\infty$. To find the sequences, he proved that the expected value of both norms goes to zero as $\abs{\zeta_1},\abs{\zeta_2}\to\infty$.

\begin{theorem}[Haberman \cite{MR3397029}]\label{thm:Haberman_vanishing}
Let us write $\zeta(U,\tau):=\tau(Ue_1-iUe_2)$ for $\tau\ge 1$ and $U\in O_d$ a rotation. If $\nabla\log\gamma_1$ and $\nabla\log\gamma_2$ are in $W^{\frac{d-4}{2p}, p}(\Rs^d)$ for $d\le p<\infty$, or in $L^3(\Rs^d)$ for $d=3$, then
\begin{equation*}
\frac{1}{M}\int\limits_M^{2M}\int\limits_{O_d}\norm{M_{q_j}}_{\dot{X}^\frac{1}{2}_{\zeta(U,\tau)}\to \dot{X}^{-\frac{1}{2}}_{\zeta(U,\tau)}}^p\,dUd\tau \text{ and }\frac{1}{M}\int\limits_M^{2M}\int\limits_{O_d}\norm{q_j}_{\dot{X}^{-\frac{1}{2}}_{\zeta(U,\tau)}}^2\,dUd\tau \xrightarrow{M\to\infty} 0.
\end{equation*}
\end{theorem}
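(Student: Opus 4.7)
The strategy is to express both averaged quantities as integrals involving Fourier weights derived from the symbol $p_{\zeta(U,\tau)}(\xi) = -|\xi|^2 + 2\tau(Ue_2\cdot\xi) + 2i\tau(Ue_1\cdot\xi)$, to compute the average of the weight over $O_d \times [M,2M]$, and to match the resulting Sobolev weight against the regularity assumed on $\nabla\log\gamma_j$. By density it suffices to obtain uniform control on the averages in terms of a Sobolev norm, plus vanishing on a dense subspace (e.g.\ Schwartz functions). Throughout I would use the decomposition $q_j = \tfrac{1}{2}\Delta\log\gamma_j + \tfrac{1}{4}|\nabla\log\gamma_j|^2$, which allows one to exchange a derivative between $q_j$ and $\nabla\log\gamma_j$.

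For the second quantity, Plancherel gives $\|q_j\|_{\dot X^{-1/2}_{\zeta(U,\tau)}}^2 = \int |\widehat{q_j}(\xi)|^2 |p_{\zeta(U,\tau)}(\xi)|^{-1}\,d\xi$. Fubini passes the average through the integral, and rotation-invariance of the Haar measure reduces $\int_{O_d} |p_{\zeta(U,\tau)}(\xi)|^{-1}\,dU$ to a scalar function of $|\xi|$ and $\tau$. A direct computation should produce, for $|\xi|\lesssim\tau$, a weight of order $|\xi|^{-(d-2)}\tau^{-2}$, and for $|\xi|\gtrsim\tau$ a weight of order $|\xi|^{-2}$, with the additional $\tau$-average smoothing the transition at $|\xi|\sim M$. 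Substituting back expresses the averaged norm as the sum of a low-frequency piece controlled by $M^{-2}\|q_j\|_{\dot H^{-(d-2)/2}}^2$ and a high-frequency tail that vanishes as $M\to\infty$ by dominated convergence, provided $q_j\in\dot H^{-(d-2)/2}\cap\dot H^{-1}$. The latter containment follows from the hypothesis $\nabla\log\gamma_j\in W^{(d-4)/(2p),p}$ after one integration by parts and standard Sobolev embedding.

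For the operator norm, I would dualize to $\|M_{q_j}\|_{\dot X^{1/2}_\zeta\to\dot X^{-1/2}_\zeta} = \sup_{u,v}|\!\int q_j u\bar v|/(\|u\|_{\dot X^{1/2}_\zeta}\|v\|_{\dot X^{1/2}_\zeta})$, and then integrate by parts to shift one derivative off $q_j$:
\[
\int q_j u\bar v = -\tfrac{1}{2}\int \nabla\log\gamma_j \cdot (\bar v \nabla u + u\nabla\bar v) + \tfrac{1}{4}\int |\nabla\log\gamma_j|^2\, u\bar v.
\]
H\"older in the form $L^p\cdot L^{p'}$ on the first term and $L^{p/2}\cdot L^{(p/2)'}$ on the second reduces matters to bilinear estimates $\|\bar v\nabla u\|_{L^{p'}}$ and $\|u\bar v\|_{L^{(p/2)'}}$ by $C(U,\tau)\|u\|_{\dot X^{1/2}_\zeta}\|v\|_{\dot X^{1/2}_\zeta}$, whose constants $C(U,\tau)^p$, once averaged over $O_d\times[M,2M]$, must tend to zero. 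Those averaged bilinear estimates should follow from the same weight computation combined with a Sobolev embedding that converts the fractional-order weight $|\xi|^{-(d-2)}M^{-2}$ into an $L^{p'}$ bound; the fractional shift $(d-4)/(2p)$ is precisely what compensates for the gap between the natural $L^2$-setting and the prescribed $L^p$-setting.

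The main obstacle will be the averaged bilinear estimate for products $u\bar v$ and $\bar v\nabla u$ in $L^{p'}$ in terms of the Bourgain-type $\dot X^{1/2}_\zeta$-norms. This is the Bourgain-space analogue of a bilinear restriction estimate for the characteristic surface $\{p_\zeta = 0\}$: the average over $U\in O_d$ replaces the usual transversality hypothesis and the scaling in $\tau$ is what ultimately drives the vanishing as $M\to\infty$. Tracking the $(U,\tau)$-dependence of the constants sharply enough to recover the regularity threshold $(d-4)/(2p)$ is the delicate step, and is where I expect to spend most of the effort.
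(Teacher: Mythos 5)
The theorem you are proving is not proved in this paper: it is quoted as Haberman's result and cited. However, the paper carries out a closely analogous argument (Theorem~\ref{thm:Main_Theorem} and Section~\ref{sec:Average_Theorem}), so one can compare your plan to the actual mechanism.

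Your treatment of the second quantity, $\norm{q_j}_{\dot X^{-1/2}_{\zeta}}^2$, is on the right track and matches what the paper says: after Plancherel and Fubini this is a weight computation, and indeed it only requires $\nabla\log\gamma_j\in L^d$. The paper explicitly comments that this is the easy piece.

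For the operator norm, your plan contains a genuine gap: you attach the source of decay to the wrong object. You propose to prove a bilinear estimate
\[
\norm{u\bar v}_{L^{p'}}\le C(U,\tau)\norm{u}_{\dot X^{1/2}_{\zeta}}\norm{v}_{\dot X^{1/2}_{\zeta}}
\]
with a constant $C(U,\tau)$ whose $p$-th power averages to zero as $M\to\infty$. But such a constant is, after the parabolic rescaling \eqref{eq:Dilation_rotation_u}--\eqref{eq:norm_Change_Variables}, essentially scale-invariant in $\tau$ and uniform in $U$: the Tomas--Stein bound \eqref{eq:TS_X} and its bilinear refinements hold uniformly in $\zeta$. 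There is no intrinsic decay in the restriction constant, and averaging it over $O_d\times[M,2M]$ gives nothing. The actual gain comes from the \emph{function}, not the estimate: for a fixed $f\in L^p$, the $L^p$-mass of its frequency restriction near the characteristic sphere $\Sigma_{\zeta(U,\tau)}$ is small on average over $U$, because a generic rotation moves the thin characteristic annulus off the bulk of $\widehat f$. This is exactly Haberman's Lemma 5.1 (Lemma~\ref{lemma:Average_Haberman} here): $\bigl(\int_{O_d}\norm{P^U_{\tau\lambda,\tau\nu}f}_p^p\,dU\bigr)^{1/p}\lesssim(\nu/\lambda)^{1/p}\norm{f}_p$, and the factor $(\nu/\lambda)^{1/p}$ is what eventually produces the vanishing after summing the dyadic decomposition.

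Your remark that ``the average over $U$ replaces the usual transversality hypothesis'' is also off: transversality in the bilinear estimate is a fixed geometric property of the two caps near $\Sigma_\zeta$, handled by a cap decomposition and parabolic rescaling at every fixed $(U,\tau)$ (Theorem~\ref{thm:parabolic_rescaling}); the averaging over $U,\tau$ is applied \emph{afterwards}, to the frequency projections of $f$, not to the bilinear constant. To fix the plan you would need to (i) decompose $u,v$ dyadically by distance $\mu,\nu$ to $\Sigma_\zeta$ and $f$ by frequency $\lambda$, (ii) prove bilinear bounds for each $(\mu,\nu,\lambda)$-piece with constants independent of $(U,\tau)$, and (iii) use a Lemma-5.1-type averaged projection estimate to extract the decay in $M$. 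That is the structure of the paper's Section~\ref{sec:Average_Theorem}, and it is where the $\frac{d-4}{2p}$ (here, $\frac{d-5}{2p}+$) threshold actually materializes.
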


The idea is that, when $\abs{\zeta_j}$ is large, the set of bad pairs $(\zeta_1,\zeta_2)$ for which $\norm{M_{q_j}}_{\dot{X}^\frac{1}{2}_{\zeta_{j,k}}\to \dot{X}^{-\frac{1}{2}}_{\zeta_{j,k}}}$ or $\norm{q_j}_{\dot{X}^{-\frac{1}{2}}_{\zeta_{j,k}}}$ is large has measure close to zero, then  it is possible to extract sequences such that these norms are small and such that $\zeta_1+\zeta_2=i\xi$.

The estimates of Haberman are very good, and most of the argument works well just for $\gamma\in W^{1,d}(\Omega)$. The bottle-neck is to get a strong upper bound of $\norm{M_{\partial_i f}}_{\dot{X}^\frac{1}{2}_{\zeta(U,\tau)}\to \dot{X}^{-\frac{1}{2}}_{\zeta(U,\tau)}}$, where $f\in W^{s,p}$ for some $s\ge 0$.  

In Section~\ref{sec:Outline} we proof the next theorem.
\begin{theorem}[Vanishing of the Expected Value]\label{thm:vanishing_Expectation}
Let us write $\zeta(U,\tau):=\tau(Ue_1-iUe_2)$ for $\tau\ge 1$ and $U\in O_d$ a rotation. Suppose that $f$ is a function supported in the unit ball. If $f\in W^{\frac{d-5}{2p}+, p}(\Rs^d)$ for $d\le p < \infty$, then
\begin{equation}
\frac{1}{M}\int\limits_M^{2M}\int\limits_{O_d}\norm{M_{\partial_i f}}_{\dot{X}^\frac{1}{2}_{\zeta(U,\tau)}\to \dot{X}^{-\frac{1}{2}}_{\zeta(U,\tau)}}\,dUd\tau \xrightarrow{M\to\infty} 0.
\end{equation}
\end{theorem}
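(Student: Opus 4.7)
By duality,
\begin{equation*}
\|M_{\partial_i f}\|_{\dot X^{1/2}_\zeta\to\dot X^{-1/2}_\zeta}=\sup\Big\{\,\Big|\int_{\Rs^d}(\partial_i f)\,u\,\overline{v}\,\Big|\;:\;\|u\|_{\dot X^{1/2}_\zeta}=\|v\|_{\dot X^{1/2}_\zeta}=1\,\Big\},
\end{equation*}
so it is enough to bound the trilinear form on the right at each $\zeta=\zeta(U,\tau)$ and then average in $(U,\tau)$. Passing to Fourier variables one sees that this trilinear form becomes an integral of $\hat f$ against the convolution $\widehat{u\bar v}$, weighted by a frequency factor $\xi_i$; a Littlewood--Paley decomposition of $f,u,v$ then reduces the task to controlling, piece by piece, the bilinear product $u\,\overline{v}$ when $u$ and $v$ are frequency-localised near the characteristic variety $\Sigma_\zeta:=\{p_\zeta=0\}$.

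\noindent\textbf{Bilinear estimate.} The new ingredient is an extension of Tao's bilinear restriction theorem to $\Sigma_\zeta$, which is a $(d-2)$-sphere of radius $\tau$ lying in the affine hyperplane $\{\xi\cdot Ue_1=0\}$. Decomposing a function in $\dot X^{1/2}_\zeta$ into dyadic shells $\{|p_\zeta|\sim\lambda\}$ -- geometrically, $\lambda/\tau$-neighbourhoods of $\Sigma_\zeta$ -- presents each piece as (essentially) an extension operator acting on an $L^2$ density on $\Sigma_\zeta$. When the angular supports of two such pieces are separated on $\Sigma_\zeta$, Tao's wave-packet decomposition and induction-on-scales adapt to yield bilinear $L^p$ bounds below the Tomas--Stein exponent, with sharp dependence on $\tau$ and on the shell parameters. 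This gain over the $L^2$ bilinear inequalities exploited by Haberman \cite{MR3397029} is what lowers the regularity index from $(d-4)/(2p)$ to $(d-5)/(2p)+$.

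\noindent\textbf{Averaging and summation.} Pairing the bilinear estimate against $\hat f$ produces, schematically, an inequality
\begin{equation*}
\|M_{\partial_i f}\|_{\dot X^{1/2}_\zeta\to\dot X^{-1/2}_\zeta}\lesssim \tau^{-s(d,p)}\,\Phi_\zeta(f),
\end{equation*}
where $\Phi_\zeta(f)$ is a Fourier-side tube norm of $f$ adapted to $\Sigma_\zeta$. Integrating in $\tau\in[M,2M]$ and $U\in O_d$ and applying Fubini, the rotations distribute the tube directions uniformly over $S^{d-1}$, so the averaged quantity is dominated by $\|f\|_{W^{(d-5)/(2p),p}}$. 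The extra $0+$ in the regularity index translates, after a Littlewood--Paley decomposition $f=\sum_k f_{2^k}$, into a geometric factor $2^{-\epsilon k}$ in each dyadic piece; combined with the $\tau$-decay this yields not just boundedness but actual vanishing of the $L^1$ average as $M\to\infty$. The hypothesis that $f$ is supported in the unit ball is used to handle the low-frequency contribution.

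\noindent\textbf{Main obstacle.} The delicate step is proving the bilinear inequality with the correct $\tau$-dependence on the degenerate characteristic sphere $\Sigma_\zeta$. Because $\Sigma_\zeta$ has codimension two, one is forced into a two-scale wave-packet decomposition -- tangential along $\Sigma_\zeta$ and transversal to the carrying hyperplane -- and must verify quantitative transversality between angularly separated caps while carefully tracking $\tau$ through every parabolic rescaling in Tao's induction-on-scales. Obtaining the sharp exponent rather than a merely qualitative bilinear bound is what underlies the improvement on \cite{MR3397029} and is expected to be the most technical part of the argument.
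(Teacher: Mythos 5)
Your broad sketch of the machinery --- duality reducing the operator norm to a trilinear form, dyadic decompositions near the characteristic sphere $\Sigma_\zeta$, an extension of Tao's bilinear restriction theorem with a two-scale wave-packet analysis, and averaging in $(U,\tau)$ --- matches what the paper carries out in Sections~3 and 4 to prove the averaged bound of Theorem~2.1. But the mechanism you propose for extracting the limit $\to 0$, which is the actual content of Theorem~1.2, does not work. You posit a pointwise decay $\norm{M_{\partial_i f}}_{\dot X^{1/2}_\zeta\to\dot X^{-1/2}_\zeta}\lesssim\tau^{-s(d,p)}\Phi_\zeta(f)$ with $s(d,p)>0$, but no such uniform negative power of $\tau$ is available: Theorem~2.1 yields only $\dashint_M\int_{O_d}\norm{M_{\partial_i f}}\,dU\,d\tau\lesssim\norm{f}_{W^{\frac{d-5}{2p}+,p}}$, a bound that is uniform in $M$. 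If the operator norm genuinely decayed in $\tau$, one could drop the $0+$ in the regularity index, which the argument cannot afford.

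The paper achieves vanishing via an $M$-dependent frequency split, not decay. Setting $g=P_{\le A}\partial_i f$ for a threshold $A$ to be chosen, one has $\norm{M_g}_{X^{1/2}_\zeta\to X^{-1/2}_\zeta}\le\abs{\zeta}^{-1}\norm{g}_\infty\lesssim A^2\abs{\zeta}^{-1}\norm{f}_d$; the gain $\abs{\zeta}^{-1}\sim M^{-1}$ comes from $\norm{u}_2\le\abs{\zeta}^{-1/2}\norm{u}_{X^{1/2}_\zeta}$. The high-frequency piece $P_{>A}f$ is fed into Theorem~2.1, giving $\lesssim\norm{P_{>A}f}_{W^{\frac{d-5}{2p}+,p}}$, which is a Littlewood--Paley tail and therefore tends to zero as $A\to\infty$. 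The choice $A=M^{1/4}$ makes both contributions vanish simultaneously. Finally, the hypothesis $\supp f\subset B_1$ is used not to handle low frequencies --- that is what the $L^\infty$ bound above does --- but to pass from the homogeneous space $\dot X^{\pm 1/2}_\zeta$ to the inhomogeneous $X^{\pm 1/2}_\zeta$, which is precisely what makes the $\abs{\zeta}^{-1}$ gain available.
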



The main consequence of this theorem is the next improvement on Calder\'on's problem.
\begin{theorem}\label{thm:Main_Thm_Calderon}
For $d=5,6$ suppose that $\Omega\subset\Rs^d$ is a bounded domain with Lipschitz boundary. If $\gamma_1$ and $\gamma_2$ are in $W^{1+\frac{d-5}{2p}+,p}(\Omega)\cap L^\infty$ for $d\le p<\infty$, and if $\gamma_1,\gamma_2\ge c>0$, then
\begin{equation*}
\Lambda_{\gamma_1}=\Lambda_{\gamma_2} \enspace\text{ implies }\enspace \gamma_1=\gamma_2.
\end{equation*}
\end{theorem}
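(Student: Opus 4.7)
The plan is to combine the new multiplier bound of Theorem~\ref{thm:vanishing_Expectation} with Haberman's Theorem~\ref{thm:Haberman_vanishing} inside the Sylvester--Uhlmann / Haberman--Tataru framework. First I would extend $\gamma_1$ and $\gamma_2$ to $\Rs^d$ so that they equal $1$ outside a large ball containing $\Omega$. Using a boundary-identification theorem (Kohn--Vogelius, adapted to Sobolev conductivities as in Brown--Torres), the hypothesis $\Lambda_{\gamma_1}=\Lambda_{\gamma_2}$ permits choosing the extensions to coincide outside $\Omega$. Setting $q_j := \gamma_j^{-1/2}\Delta \gamma_j^{1/2}$ and using the Liouville identity
\begin{equation*}
q_j = \tfrac{1}{2}\Delta\log\gamma_j + \tfrac{1}{4}\abs{\nabla\log\gamma_j}^2,
\end{equation*}
which is a well-defined distribution under the hypothesis on $\gamma_j$, the identity~\eqref{eq:Density_Laplace} together with the standard reduction $w_j=\gamma_j^{1/2}u_j$ reduces matters to showing that $q_1=q_2$.

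To obtain this, I would recover $\widehat{q_1-q_2}(\xi)$ via complex geometrical optics. For fixed $\xi\in\Rs^d$, construct CGO solutions $w_j = e^{\zeta_j\cdot x}(1+\psi_j)$ of $(-\Delta+q_j)w_j=0$ with $\zeta_j\cdot\zeta_j=0$ and $\zeta_1+\zeta_2 = i\xi$. By the Haberman--Tataru fixed-point argument, $\psi_j$ exists in $\dot{X}^{1/2}_{\zeta_j}$ and is small as soon as
\begin{equation*}
\norm{M_{q_j}}_{\dot{X}^{1/2}_{\zeta_j}\to \dot{X}^{-1/2}_{\zeta_j}} < c < 1 \quad\text{and}\quad \norm{q_j}_{\dot{X}^{-1/2}_{\zeta_j}} \ll 1.
\end{equation*}
To control the first norm, decompose $q_j = \sum_{i=1}^d \partial_i(\tfrac{1}{2}\partial_i\log\gamma_j) + \tfrac{1}{4}\abs{\nabla\log\gamma_j}^2$ and apply Theorem~\ref{thm:vanishing_Expectation} to each $f=\tfrac{1}{2}\partial_i\log\gamma_j \in W^{(d-5)/(2p)+,p}$; the quadratic tail, which lies in a lower-order Sobolev space by a fractional Leibniz rule, is absorbed by a standard Sobolev-multiplier estimate. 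The second norm is treated directly by Theorem~\ref{thm:Haberman_vanishing}.

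Next I would extract good sequences. The constraint $\zeta_1+\zeta_2=i\xi$ amounts to choosing $U\in O_d$ with $Ue_1\perp\xi$ and $\tau\,\xi\cdot Ue_2 = -\abs{\xi}^2/2$, a codimension-two subfamily of $O_d\times[M,2M]$. Combining Fubini with the vanishing-in-expectation results and Chebyshev's inequality on this subfamily yields pairs $(\zeta_{1,k},\zeta_{2,k})$ with $\abs{\zeta_{j,k}}\to\infty$, $\zeta_{1,k}+\zeta_{2,k}=i\xi$, and all four relevant norms tending to zero along the sequence. Substituting into the Alessandrini identity $\int_{\Rs^d}(q_1-q_2)w_1w_2\,dx=0$, expanding $w_1w_2 = e^{i\xi\cdot x}(1+\psi_1+\psi_2+\psi_1\psi_2)$, and estimating the error terms by the duality between $\dot{X}^{1/2}_{\zeta_j}$ and $\dot{X}^{-1/2}_{\zeta_j}$ gives $\widehat{q_1-q_2}(\xi)=0$ for every $\xi$, hence $q_1=q_2$. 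A final Liouville-type step using the matching boundary values then upgrades $q_1=q_2$ to $\gamma_1=\gamma_2$.

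I expect the hardest step to be showing that the expected-value estimates of Theorems~\ref{thm:Haberman_vanishing} and~\ref{thm:vanishing_Expectation} survive restriction to the codimension-two submanifold enforcing $\zeta_1+\zeta_2=i\xi$; this is where Haberman's selection argument really lives, and it requires a careful Fubini decomposition of $O_d$ together with uniform control over the dependence of the Bourgain norms on the rotation $U$. A secondary but delicate issue is placing the quadratic remainder $\abs{\nabla\log\gamma_j}^2$ in the right Bourgain-type space at the borderline regularity $s=(d-5)/(2p)+$ with $p\ge d$, which should follow from a fractional Leibniz inequality combined with Sobolev embedding, but is tight enough that any loss in $s$ would destroy the $p=d$ endpoint recorded in the theorem.
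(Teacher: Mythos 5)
Your overall strategy is the same one the paper follows: extend the conductivities, pass to $q_j = \gamma_j^{-1/2}\Delta\gamma_j^{1/2}$, build CGO solutions via the Haberman--Tataru Bourgain spaces, invoke the new vanishing-in-expectation bound of Theorem~\ref{thm:vanishing_Expectation} for the $\partial_i f$ part and Haberman's Lemma for the quadratic tail, and then run Haberman's pigeonhole selection to extract a good sequence $(\zeta_{1,k},\zeta_{2,k})$ with $\zeta_{1,k}+\zeta_{2,k}=i\xi$. That skeleton is correct, and your identification of Haberman's measure-theoretic selection argument as the place where the Fubini structure really matters is a fair reading. However, there is a genuine gap in the very first step, and it is exactly the point at which the hypotheses $d=5,6$ in this theorem differ from those of Theorem~\ref{thm:Main_Thm_Calderon_boundary}: you write ``the hypothesis $\Lambda_{\gamma_1}=\Lambda_{\gamma_2}$ permits choosing the extensions to coincide outside $\Omega$'' as if this were automatic, but the delicate point is that the common extension must \emph{remain} in $W^{1+\frac{d-5}{2p}+,p}(\Rs^d)$.

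Concretely, after using Brown's boundary determination result to get $\gamma_1=\gamma_2$ on $\partial\Omega$, one extends $\gamma_1$ arbitrarily to $W^{1+\frac{d-5}{2p}+,p}(\Rs^d)$ and then defines $\gamma_2 := \gamma_1 + \eta$, where $\eta := (\gamma_2-\gamma_1)\ind_\Omega$. For $\eta$ to lie in $W^{1+\frac{d-5}{2p}+,p}(\Rs^d)$, i.e.\ for the zero-extension to preserve the Sobolev class, one needs the excess regularity above $W^{1,p}$ to be at most $1/p$; that is, one needs $\frac{d-5}{2p}+ \le \frac{1}{p}$, which forces $d\le 6$. This is precisely the restriction in the statement of this theorem. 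For $d\ge 7$ the zero-extension fails at this regularity unless one also has matching normal derivatives, which is why the paper introduces the separate hypothesis $\partial_\nu\gamma_1=\partial_\nu\gamma_2$ at $\partial\Omega$ in Theorem~\ref{thm:Main_Thm_Calderon_boundary}. Your proposal, as written, would apparently ``prove'' Theorem~\ref{thm:Main_Thm_Calderon_boundary} without that extra hypothesis, which is a sign the extension step has been glossed over. You should insert the trace/extension lemma and show where $d=5,6$ is used; conversely, your worry about the quadratic term $|\nabla\log\gamma_j|^2$ is a non-issue here, since $\gamma_j\in W^{1,p}\cap L^\infty$ with $p\ge d$ on a bounded domain already gives $\nabla\log\gamma_j\in L^d$, which is all that Haberman's estimate for $M_{|f|^2}$ requires.
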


We write $\gamma\in W^{1+\frac{d-5}{2p}+,p}(\Omega)\cap L^\infty$ to emphasize that $\gamma\in L^\infty$, but it follows from Sobolev embedding for domains with Lipschitz boundaries. We note that Theorem~\ref{thm:vanishing_Expectation} holds for $d\ge 3$, and the restriction $d=5,6$ in Theorem~\ref{thm:Main_Thm_Calderon} seems technical; in fact, we can state the following consequence of the vanishing of the expected value.

\begin{theorem}\label{thm:Main_Thm_Calderon_boundary}
For $d\ge 7$ suppose that $\Omega\subset\Rs^d$ is a bounded domain with Lipschitz boundary. If $\gamma_1$ and $\gamma_2$ are in $W^{1+\frac{d-5}{2p}+,p}(\Omega)\cap L^\infty$ for $d\le p<\infty$, if $\partial_\nu\gamma_1=\partial_\nu\gamma_2$ at $\partial\Omega$, and if $\gamma_1,\gamma_2\ge c>0$, then
\begin{equation*}
\Lambda_{\gamma_1}=\Lambda_{\gamma_2} \enspace\text{ implies }\enspace \gamma_1=\gamma_2.
\end{equation*}
\end{theorem}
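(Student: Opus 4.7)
The plan is to follow the Sylvester--Uhlmann/Haberman CGO strategy used for Theorem~\ref{thm:Main_Thm_Calderon}, adding a single new step at the beginning that exploits the extra hypothesis $\partial_\nu\gamma_1=\partial_\nu\gamma_2$. First I would extend $\gamma_1,\gamma_2$ from $\Omega$ to $\Rs^d$ so that each extension $\tilde\gamma_j$ lies in $W^{1+\frac{d-5}{2p}+,p}(\Rs^d)$, both equal $1$ outside a ball containing $\Omega$, and $\tilde\gamma_1=\tilde\gamma_2$ outside $\Omega$. Kohn--Vogelius boundary determination \cite{MR739921} forces $\gamma_1=\gamma_2$ on $\partial\Omega$; combined with the assumption $\partial_\nu\gamma_1=\partial_\nu\gamma_2$, the traces of $\gamma_j$ and of $\partial_\nu\gamma_j$ match on $\partial\Omega$. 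For $d\ge 7$ and $p\ge d$ the fractional index satisfies $\frac{d-5}{2p}+{}>\frac{1}{p}$, which is exactly the compatibility needed for the zero-extension of $\gamma_1-\gamma_2$ across $\partial\Omega$ (the glue used to match the two extensions) to preserve $W^{1+\frac{d-5}{2p}+,p}$ regularity. This is the only place in the argument where the restriction $d\ge 7$ (as opposed to $d\ge 3$) enters.

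With the $\tilde\gamma_j$ in hand I would set $q_j:=\tilde\gamma_j^{-1/2}\Delta\tilde\gamma_j^{1/2}=\tfrac{1}{2}\Delta\log\tilde\gamma_j+\tfrac{1}{4}|\nabla\log\tilde\gamma_j|^2$, so that $q_1-q_2$ is compactly supported, and use \eqref{eq:Density_Laplace} to reduce $\Lambda_{\gamma_1}=\Lambda_{\gamma_2}$ to the vanishing of $\int_{\Rs^d}(q_1-q_2)w_1w_2$ for every pair of CGO solutions. I would then pick $w_j=e^{\zeta_j\cdot x}(1+\psi_j)$ with $\zeta_j=\zeta(U_{j,k},\tau_{j,k})$ arranged so that $\zeta_1+\zeta_2=i\xi$ for any fixed $\xi\in\Rs^d$, and construct $\psi_j\in\dot{X}^{1/2}_{\zeta_j}$ as in Haberman--Tataru by a contraction argument. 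Both the existence/smallness of $\psi_j$ and the control of the error term reduce to the smallness, along a chosen sequence of parameters, of $\norm{M_{q_j}}_{\dot{X}^{1/2}_{\zeta_j}\to \dot{X}^{-1/2}_{\zeta_j}}$ and of $\norm{q_j}_{\dot{X}^{-1/2}_{\zeta_j}}$.

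The divergence piece $\tfrac12\Delta\log\tilde\gamma_j=\sum_i\partial_i f_{j,i}$, with $f_{j,i}=\tfrac12\partial_i\log\tilde\gamma_j\in W^{\frac{d-5}{2p}+,p}(\Rs^d)$ compactly supported (rescale to the unit ball), falls directly under the hypothesis of Theorem~\ref{thm:vanishing_Expectation}; the quadratic piece $\tfrac14|\nabla\log\tilde\gamma_j|^2$ sits in a better space by Sobolev multiplication in the range $d\le p<\infty$ and is absorbed as a lower-order error; finally $\norm{q_j}_{\dot{X}^{-1/2}_{\zeta_j}}$ has vanishing expectation by Theorem~\ref{thm:Haberman_vanishing}. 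A Chebyshev/pigeonhole argument over the averaging set $O_d\times[M,2M]$, as $M\to\infty$, then extracts sequences $\{\zeta_{1,k}\},\{\zeta_{2,k}\}$ with $\zeta_{1,k}+\zeta_{2,k}=i\xi$ along which all four norms tend to $0$ simultaneously. Passing to the limit in $\int(q_1-q_2)w_{1,k}w_{2,k}=0$ yields $\widehat{q_1-q_2}(\xi)=0$; since $\xi$ is arbitrary, $q_1=q_2$, and one concludes $\gamma_1=\gamma_2$ by the standard ODE argument from the identity $\Delta\gamma_j^{1/2}=q_j\gamma_j^{1/2}$ together with $\gamma_1=\gamma_2$ on $\partial\Omega$.

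The hard part will be the extension step in the first paragraph: producing Sobolev extensions of $\gamma_1$ and $\gamma_2$ on a Lipschitz domain that coincide outside $\Omega$ while retaining full fractional regularity $W^{1+\frac{d-5}{2p}+,p}$. This is what the hypothesis $\partial_\nu\gamma_1=\partial_\nu\gamma_2$ buys: without matching normal derivatives, a jump of $\partial_\nu\gamma$ across $\partial\Omega$ would degrade the gluing to $W^{1,p}$ and block the application of Theorem~\ref{thm:vanishing_Expectation}, which requires the extra $\frac{d-5}{2p}+$ derivatives of $\partial_i\log\tilde\gamma_j$ globally on $\Rs^d$.
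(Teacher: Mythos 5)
Your approach is essentially the same as the paper's: reduce by the Sylvester--Uhlmann substitution to a Schr\"odinger equation, build CGO solutions via the Haberman--Tataru spaces, and extract good parameters from the averaged estimate (Theorem~\ref{thm:vanishing_Expectation} for the divergence piece, Haberman's estimate for the quadratic piece and for $\|q_j\|_{\dot X^{-1/2}_\zeta}$). You also correctly identify that the \emph{only} place where $d\ge 7$ bites is the extension step, where the fractional index $\frac{d-5}{2p}+$ crosses the critical value $\frac{1}{p}$ for zero-extension across a Lipschitz boundary, and that the extra hypothesis $\partial_\nu\gamma_1=\partial_\nu\gamma_2$ is what makes the gluing in $W^{1+\frac{d-5}{2p}+,p}$ possible. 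That matches the explanation given in Section~\ref{sec:Outline}.

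One correction of substance: to get $\gamma_1=\gamma_2$ on $\partial\Omega$ you invoke Kohn--Vogelius \cite{MR739921}, but that result requires smooth conductivities and cannot be used at the regularity $W^{1+\frac{d-5}{2p}+,p}$. The paper instead uses Brown's low-regularity boundary determination theorem \cite{MR1881563}, which applies precisely in this range. Since boundary determination of $\gamma$ is the hinge on which the gluing argument turns (together with the \emph{assumed} matching of the normal derivatives), you should swap Kohn--Vogelius for Brown's theorem; otherwise the argument doesn't get started. Aside from that and a loose phrasing (the quadratic piece $\tfrac14|\nabla\log\gamma_j|^2$ is handled by Haberman's $L^d$ estimate, not ``Sobolev multiplication''; the final step from $q_1=q_2$ to $\gamma_1=\gamma_2$ is a Schr\"odinger uniqueness argument rather than an ODE one), the proposal follows the paper's route.
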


By the trace theorem the normal derivative $\partial_\nu \gamma$ is well-defined.  The proof of Theorem~\ref{thm:Main_Thm_Calderon} and Theorem~\ref{thm:Main_Thm_Calderon_boundary} has been already summarized in this introduction, and we provide some more details in Section~\ref{sec:Outline}. We refer the reader to the literature to reconstruct the whole argument, in particular to Haberman \cite{MR3397029} and to Ham, Kwon and Lee \cite{HKL}.

\subsection{Restriction Theory}

Ham, Kwon and Lee \cite{HKL} applied deep estimates from restriction theory to improve on Harberman results, and we will follow most of their arguments. We give here a brief introduction to restriction theory and the way it comes in Calder\'on's problem; a detailed exposition of restriction theory can be found in \cite[part IV]{MR3617376}.

We control the norm $\norm{M_{\partial_i f}}_{\dot{X}^\frac{1}{2}_{\zeta(U,\tau)}\to \dot{X}^{-\frac{1}{2}}_{\zeta(U,\tau)}}$ by duality, so we need an upper bound of
\begin{equation}
\inner{(\partial_i f)u}{v}=\int_{\Rs^d}(\partial_i f)u\conj{v}\,dx \quad\text{for } u,v\in \dot{X}^\frac{1}{2}_{\zeta(U,\tau)}.
\end{equation}
The contribution coming from frequencies close the null set of $p_\zeta(\xi)=-\abs{\xi}^2+2i\zeta\cdot\xi$, which we call the characteristic set $\Sigma_\zeta$, is the hardest part we have to deal with. 

The characteristic set $\Sigma_\zeta$ is a $(d-2)$-sphere, and we have to control the duality pairing when the Fourier transform of $u$ and $v$ is concentrated close to $\Sigma_\zeta$. This is just the setting for which restriction theory has been developed; a few classical examples of applications are \cite{MR0320624, MR894584, MR1616917, MR3548534}.

In restriction theory, we seek to prove the best possible bounds $\norm{\widehat{f}\mid_S}_p\le C\norm{f}_q$, where $S$ is a manifold or just a set. One of earliest and most important result is due to Tomas \cite{MR358216} and Stein (unpublished); for the proof see \textit{e.g.} \cite[chp. 9]{MR1232192}.

\begin{theorem}(Tomas-Stein Inequality)
Suppose that $S\subset \Rs^n$ is a compact surface with non-vanishing curvature. If $f\in L^{p}(\Rs^n)$ for $1\le p\le 2\frac{n+1}{n+3}$, then
\begin{equation}
\norm{\widehat{f}\mid_S}_2\le C\norm{f}_p.
\end{equation}
\end{theorem}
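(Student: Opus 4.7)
The plan is to prove the convolution form of the inequality by a dyadic decomposition followed by complex interpolation. First, by duality the restriction estimate $\norm{\widehat{f}|_S}_{L^2(d\sigma)}\lesssim\norm{f}_p$ is equivalent to the extension estimate $\norm{\widehat{g\,d\sigma}}_{p'}\lesssim\norm{g}_{L^2(d\sigma)}$, and a standard $TT^*$ computation reduces both to the convolution bound $\norm{f*K}_{p'}\lesssim\norm{f}_p$, where $K:=\widehat{d\sigma}$. Thus it suffices to control convolution with $K$. Because $S$ has non-vanishing Gaussian curvature, classical stationary phase gives the pointwise decay $\abs{K(\xi)}\lesssim(1+\abs{\xi})^{-(n-1)/2}$.

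Next I would take a smooth dyadic partition $1=\sum_{j\ge 0}\varphi_j$ with $\varphi_j$ supported in $\abs{\xi}\sim 2^j$ for $j\ge 1$, and set $K_j:=\varphi_j K$. The stationary-phase estimate yields $\norm{K_j}_\infty\lesssim 2^{-j(n-1)/2}$, so Young's inequality gives the endpoint $\norm{f*K_j}_\infty\lesssim 2^{-j(n-1)/2}\norm{f}_1$. For the other endpoint, $\widehat{K_j}=\widehat{\varphi_j}*d\sigma$ is essentially supported in an $O(2^{-j})$-tubular neighbourhood of $S$; a direct calculation exploiting the $(n-1)$-dimensional nature of $S$ gives $\norm{\widehat{K_j}}_\infty\lesssim 2^j$, which by Plancherel translates into $\norm{f*K_j}_2\lesssim 2^j\norm{f}_2$.

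Riesz--Thorin interpolation between the endpoints $(1,\infty)$ and $(2,2)$ at parameter $\theta\in[0,1]$ with $1/p=1-\theta/2$ then produces $\norm{f*K_j}_{p'}\lesssim 2^{j\alpha(p)}\norm{f}_p$, where $\alpha(p)=\theta-(1-\theta)(n-1)/2$. A short computation shows $\alpha(p)<0$ precisely when $\theta<(n-1)/(n+1)$, i.e., $p<2(n+1)/(n+3)$, so for such $p$ the geometric sum over $j$ converges and the theorem follows in the open range; the dyadic piece $j=0$ only adds a bounded convolution kernel.

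The main obstacle is the endpoint $p=2(n+1)/(n+3)$, where $\alpha(p)=0$ and the dyadic sum diverges logarithmically. I would handle this in the classical way, embedding $Tf:=f*K$ in an analytic family $T_zf:=f*K^z$ obtained by analytically continuing the surface amplitude with a complex exponent (for the sphere, the Riesz-like family $(1-\abs{\xi}^2)_+^z$; for a general curved $S$, a local analogue built from the defining function of $S$). On one vertical line $T_z$ acts on $L^1\to L^\infty$ with improved decay, while on another it acts on $L^2\to L^2$ with only admissible growth, both with at most exponential dependence on $\operatorname{Im} z$; Stein's complex interpolation theorem then closes the estimate at the critical exponent. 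Verifying these uniform bounds on the strip, together with the required growth control in $\operatorname{Im} z$, is the technically delicate part.
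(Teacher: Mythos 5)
The paper does not prove this theorem: it states Tomas--Stein as classical background and points to the literature (``for the proof see \emph{e.g.} [chp.\ 9] of the cited reference''), so there is no internal argument to compare against. Your proof is the standard one and is essentially correct: the $TT^*$ reduction to convolution with $K=\widehat{d\sigma}$, the stationary-phase decay $\abs{K(\xi)}\lesssim\japan{\xi}^{-(n-1)/2}$, the dyadic split $K=\sum_j K_j$ with the two endpoint bounds $\norm{K_j}_\infty\lesssim 2^{-j(n-1)/2}$ (hence $L^1\to L^\infty$) and $\norm{\widehat{K_j}}_\infty\lesssim 2^j$ (hence $L^2\to L^2$ by Plancherel), Riesz--Thorin at parameter $\theta$ with exponent $\alpha(p)=\theta-(1-\theta)(n-1)/2$, and the observation that $\alpha(p)<0$ exactly when $p<2\frac{n+1}{n+3}$ are all correct, and this fully proves the statement in the open range. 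The only incomplete step is the one you flag yourself: the theorem as stated in the paper includes the endpoint $p=2\frac{n+1}{n+3}$, and your proposal only sketches Stein's analytic-family argument without verifying the uniform strip bounds or the construction of $K^z$ for a general curved $S$. That sketch is the right idea (it is precisely how Stein closed the endpoint), but as written it is an outline, not a proof; carrying it out, you would need to exhibit the analytic continuation of the amplitude, check the $L^1\to L^\infty$ decay gain on one line and the $L^2\to L^2$ admissible growth on the other, and control the constants in $\operatorname{Im} z$ so Stein interpolation applies. None of this casts doubt on the approach, which is exactly the classical route the paper is implicitly invoking.
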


The dual operator is called the extension operator, and it is the Fourier transform of a measure $fdS$ supported on the set $S$. The function $\ext{f}$ is the prototype of a function with frequencies highly concentrated close to $S$. In the dual side, the Tomas-Stein inequality is
\begin{equation}
\norm{\ext{f}}_{L^{p'}(\Rs^n)}\le C\norm{f}_{L^2(S)} \quad\text{for }\, 2\frac{n+1}{n-1}\le p'\le\infty.
\end{equation}

Since the earliest days of restriction theory, a kind of stability of bilinear estimates was exploited; for example, the bound $\norm{\ext{f}}_{L^4(\Rs^2)}\le C\norm{f}_2$ is false, but the bound $\norm{(f_1dS_1)^\vee(f_2dS_2)^\vee}_{L^2(\Rs^2)}\le C\norm{f_1}_2\norm{f_2}_2$ is true, whenever the lines $S_1$ and $S_2$ are transversal; curvature is not required. This stability of bilinear estimates was clarified and refined by Tao, Vargas and Vega \cite{MR1625056}.

If we are to expect some improvement of a bilinear estimate, we have to require a separation condition on the surfaces $S_1$ and $S_2$ involved. For example, if $\norm{(f_1dS_1)^\vee(f_2dS_2)^\vee}_{L^2(\Rs^2)}\le C\norm{f_1}_2\norm{f_2}_2$ were true in any case, then just setting $S_1=S_2$ would provide a linear estimate, a false one in this case. One of the key outcomes of \cite{MR1625056} is a general strategy to get linear bounds from bilinear bounds, and we will follow this strategy in Section~\ref{sec:bilinear_Strategy}.

If we are to use the bilinear strategy, we need strong bilinear upper bounds. For some time, the bilinear analogue of the Tomas-Stein inequality in $\Rs^n$, for $n\ge 3$, was known as Klainerman-Machedon conjecture. Wolff made the first big progress, proving the conjecture when the surfaces are subsets of the cone \cite{MR1836285}. Subsequently, Tao refined the method and proved the conjecture when the surfaces are subsets of a surface with positive curvature \cite{MR2033842}. Vargas \cite{MR2106972} and Lee \cite{MR2218987} proved the conjecture when the surfaces are subsets of the hyperboloid, dealing with unusual obstructions.

Since we are interested in the sphere, we need to prove the bilinear theorem for this case. To avoid antipodal points in the bilinear inequality, we restrict ourselves to the surface
\begin{equation}\label{eq:truncated_Sphere}
S:= \{(\xi',\xi_n)\mid \xi_n = 1-\sqrt{1-\abs{\xi'}^2}\text{ and } \abs{\xi'}<\frac{1}{\sqrt{2}}+\frac{1}{10}\}
\end{equation}
Following \cite{MR1625056}, we define also surfaces of elliptic type.

\begin{definition}(Surfaces of Elliptic Type)
A surface $S$ is of $\varepsilon$-elliptic type if:
\begin{itemize}
\item The surface is the graph of a $C^\infty$ function $\Phi:B_1\subset\Rs^{n-1}\to\Rs$.
\item $\Phi(0)=0$ and $\nabla\Phi(0)=0$.
\item The eigenvalues of $D^2\Phi(x)$ lie in $[1-\varepsilon,1+\varepsilon]$ for every $x\in B_1$.
\end{itemize}
\end{definition}

For every $\varepsilon>0$ and for every point in a surface with positive curvature, we can find a sufficiently small neighborhood $U$ so that $U$ is of $\varepsilon$-elliptic type, up to a linear transformation.

We prove in Section~\ref{sec:Bilinear_Estimates} the next extension of Tao's bilinear theorem.

\begin{theorem}[Bilinear Theorem]\label{thm:Bilinear}
Suppose that $S_1,S_2\subset \Rs^n$ are two open subsets of a surface of elliptic type or the hemisphere in \eqref{eq:truncated_Sphere}, and suppose that their diameter is $\lesssim 1$ and they lie at distance $\sim 1$ of each other. If $f_\mu$ and $g_\nu$ are functions with Fourier transforms supported in a $\mu$-neighborhood of $S_1$ and a $\nu$-neighborhood of $S_2$ respectively, for $\mu\le\nu< \mu^\frac{1}{2}<1$, then for every $\delta>0$ it holds that
\begin{equation}\label{eq:Bilinear_Comparable}
\norm{f_\mu g_\nu}_{p'} \le C_\delta\mu^{\frac{n}{2p}-\delta}\nu^{\frac{1}{p}-\delta}\norm{f_\mu}_2\norm{g_\nu}_2, \quad\text{for }1\le p'\le \frac{n}{n-1}. 
\end{equation}
For surfaces of $\varepsilon$-elliptic type, the constant $C_\delta$ may depend on $\varepsilon$ and on the semi-norms $\norm{\partial^N\Phi}_\infty$. The inequalities are best possible in $\mu$ and $\nu$, up to $\delta$-losses.
\end{theorem}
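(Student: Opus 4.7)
The plan is to reduce Theorem~\ref{thm:Bilinear} to Tao's bilinear theorem on the unit scale through parabolic decomposition, almost-orthogonality, and parabolic rescaling. First I would use interpolation to isolate the endpoint: at $p'=1$ both exponents $n/(2p)$ and $1/p$ vanish, and the claim becomes Cauchy--Schwarz $\|f_\mu g_\nu\|_1\le \|f_\mu\|_2\|g_\nu\|_2$, so by complex interpolation the whole range $1\le p'\le n/(n-1)$ follows from the endpoint
\begin{equation*}
\|f_\mu g_\nu\|_{n/(n-1)} \le C_\delta\,\mu^{1/2-\delta}\nu^{1/n-\delta}\|f_\mu\|_2\|g_\nu\|_2.
\end{equation*}

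Next I would perform a parabolic decomposition adapted to the two thicknesses: cover $S_1$ by caps $\tau$ of radius $\mu^{1/2}$ and $S_2$ by caps $\theta$ of radius $\nu^{1/2}$, and write $f_\mu=\sum_\tau f_\tau$, $g_\nu=\sum_\theta g_\theta$, where each $f_\tau$ has Fourier support in a plate of dimensions $(\mu^{1/2})^{n-1}\times\mu$ tangent to $S_1$, and similarly for $g_\theta$. Since $S_1$ and $S_2$ are transversal at the unit scale while the plates are thin, the Fourier supports of the products $f_\tau g_\theta$ have bounded overlap, giving the almost-orthogonality
\begin{equation*}
\|f_\mu g_\nu\|_2^2 \le C\sum_{\tau,\theta}\|f_\tau g_\theta\|_2^2.
\end{equation*}
Together with a standard bilinear summation argument for $p'\le 2$, this reduces matters to a single-pair estimate.

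For the single pair I would apply the parabolic rescaling adapted to the larger scale $\nu^{1/2}$, namely $\xi'\mapsto \nu^{-1/2}\xi'$, $\xi_n\mapsto\nu^{-1}\xi_n$. Under this map the cap $\theta$ becomes a unit cap of unit Fourier thickness, while $\tau$ becomes a cap of radius $(\mu/\nu)^{1/2}\le 1$ and Fourier thickness $\mu/\nu\le 1$. Tao's bilinear theorem for elliptic surfaces then applies to the rescaled configuration at any $p'>(n+2)/n$; an $\epsilon$-removal argument pushes the bound to the endpoint $p'=n/(n-1)$ at the cost of the $\delta$-loss, and undoing the rescaling produces the Jacobian factors $\mu^{n/(2p)-\delta}$ and $\nu^{1/p-\delta}$ predicted by scaling.

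The main obstacle is this rescaling step: Tao's theorem is symmetric in the two thicknesses while our setting has $\mu\ne\nu$, and in the hemisphere case~\eqref{eq:truncated_Sphere} one must additionally ensure transversality does not degenerate near antipodal points---the reason the surface is truncated to $|\xi'|<1/\sqrt 2+1/10$. Handling this, together with the precise bilinear summation at the endpoint of Tao's open range, is the technical heart of the argument. Sharpness in $\mu$ and $\nu$ would be verified by testing against Knapp-type examples concentrated on a single parabolic cap, which show that the claimed exponents cannot be improved beyond the $\delta$-loss.
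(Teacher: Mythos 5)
Your plan fails at its central step, and it is in fact the very approach the paper explicitly rules out: at the end of the introduction to restriction theory the paper notes that averaging over translations of the surface and applying Tao's bilinear theorem does not reach the optimal bound \eqref{eq:Bilinear_Comparable} except when $\mu=\nu$. The difficulty is not technical bookkeeping but a genuine loss. After your parabolic rescaling $\xi'\mapsto\nu^{-1/2}\xi'$, $\xi_n\mapsto\nu^{-1}\xi_n$ the cap $\theta$ has Fourier thickness $\nu/\nu=1$, so at that point $g_\theta$ carries no localization to the surface at all, and any further application of a bilinear restriction inequality can only reproduce the trivial unit-thickness bound for the $g$-side. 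Concretely, treating the entire configuration at common thickness $\nu$ and invoking Tao gives $\norm{f_\mu g_\nu}_{n/(n-1)}\lesssim\nu^{1/2+1/n}\norm{f_\mu}_2\norm{g_\nu}_2$, while the theorem claims $\mu^{1/2}\nu^{1/n}$; since $\mu\le\nu<1$, the target bound is strictly stronger whenever $\mu<\nu$, so no rescaling to a symmetric configuration can close the gap. Your $\epsilon$-removal remark is also off: for $n\ge 3$ one has $n/(n-1) < (n+2)/n$, so $p'=n/(n-1)$ lies outside Tao's restriction range for the extension operator; the reason the theorem holds there is the $\mu,\nu$ gain rather than any continuation of Tao's endpoint, and $\epsilon$-removal does not change the exponent $p'$.

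What the paper actually does is re-execute the entire Tao machinery in an asymmetric setting rather than reduce to it by scaling. After localizing to balls of side $\mu^{-1}$ and writing $f_\mu$ as an average of extensions $\ext{f}$, the problem becomes $\norm{\ext{f}g_\nu}_{L^{p'}(B_R)}$ with $R=\mu^{-1}$. The wave packets of $\ext{f}$ are standard $R^{1/2}\times\cdots\times R^{1/2}\times R$ tubes, but those of $g_\nu$ are $\nu^{-1}\times R^{1/2}\times\cdots\times R^{1/2}$ slabs of length $\nu^{-1}\le R$; these two families have different anisotropies, which is exactly what a symmetric rescaling cannot reconcile. Tao's induction on scales is then run with a tube-ball relation for both families, the key $L^2$ decoupling is obtained from a Radon-transform identity (Lemma~\ref{lemma:L2_Bound}) rather than cap almost-orthogonality, and the resulting quantity $\nu(q,\mu_2,\lambda_1)$ is controlled by a Kakeya-type combinatorial estimate that exploits the transversality of the two tube families through a single cube. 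Your almost-orthogonality claim ``the Fourier supports of $f_\tau g_\theta$ have bounded overlap'' also needs justification: with caps at scales $\mu^{1/2}$ and $\nu^{1/2}$ the Minkowski sums are not trivially finitely overlapping and in any case the paper does not use orthogonality of cap products, it uses the Radon-transform $L^2$ bound. If you wish to prove the theorem rather than a strictly weaker statement, you must adapt the wave packet and Kakeya steps to the two-scale tube geometry directly, as in Sections~\ref{sec:WP}--\ref{sec:Kakeya}.
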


Unexpected phenomena appear: when $\mu$ is much smaller than $\nu$, \textit{i.e.} when $\mu^\frac{1}{2}\le \nu$, then bilinearity does not play any role; moreover, the curvature of the support of $g_\nu$ is of no importance, and the bounds that Tomas-Stein yield cannot be improved. If we try to get bilinear bounds for $f_\mu$ and $g_\nu$ by averaging over translations of the surface and then applying Tao's bilinear theorem, we do not reach the optimal result \eqref{eq:Bilinear_Comparable}, except when $\mu=\nu$.


The reader can consult the symbols and notations we use at the end of the article.

\subsection*{Acknowledgments}

I wish to thank Pedro Caro for his continuous support and stimulating conversations. This research is supported by the Basque Government through the BERC 2018-2021 program, and by the Spanish State Research Agency through BCAM Severo Ochoa excellence accreditation SEV-2017-0718 and through projects ERCEA Advanced Grant 2014 669689 - HADE and PGC2018-094528-B-I00.

\section{Outline of the Proof}\label{sec:Outline}

The proof that Theorem~\ref{thm:vanishing_Expectation} implies Theorem~\ref{thm:Main_Thm_Calderon} is long, and many steps are already well described in the literature. We refer the reader to \cite{MR3397029, HKL} for details.

First, we extend carefully $\gamma_1$ and $\gamma_2$ to the whole space. By the definition of $W^{s,p}(\Omega)$, we can extend $\gamma_1$ to a function in $W^{s,p}(\Rs^d)$. Since $\gamma_j\in W^{1+\frac{d-5}{2p}+,p}(\Omega)$, then by a theorem of Brown in \cite{MR1881563} we have that $\gamma_1=\gamma_2$ at $\partial\Omega$ if $\Lambda_{\gamma_1}=\Lambda_{\gamma_2}$. Now we define the function
\begin{equation*}
\eta :=
\begin{cases}
\gamma_2-\gamma_1 &\text{if } \Omega \\
0 &\text{if } \Omega^c.
\end{cases}
\end{equation*}
Since $\eta$ is zero at $\partial\Omega$ and $\frac{d-5}{2p}+\le \frac{1}{p}$, then $\eta\in W^{1+\frac{d-5}{2p}+,p}(\Rs^d)$ (see \cite[Theorem 1]{MR884984}); this explains the condition $d\le 6$ in Theorem~\ref{thm:Main_Thm_Calderon}. We can thus define the extension $\gamma_2:=\gamma_1+\eta\in W^{1+\frac{d-5}{2p}+,p}(\Rs^d)$. Finally, we arrange the extensions so that $\gamma_1=\gamma_2=1$ outside a ball containing $\Omega$. For $d\ge 7$ we are in the case $\frac{d-5}{2p}+> \frac{1}{p}$, and we need additionally the condition $\partial_\nu\gamma_1=\partial_\nu\gamma_2$ at $\partial\Omega$ to be able to extend the conductivities. This is the condition that we included in Theorem~\ref{thm:Main_Thm_Calderon_boundary}.



 
For all $w_1,w_2\in H^1_{\text{loc}}(\Rs^d)$ that solve  $(-\Delta + q_j)w_j=0$ with $q_j=\gamma_j^{-\frac{1}{2}}\Delta\gamma_j^\frac{1}{2}$, we want to show that the collection of functions $\{w_1w_2\}$ is dense, which implies that $\gamma_1=\gamma_2$; see \cite{MR2026763} for a rigorous justification. Notice that $q_j$ is compactly supported.

For $\zeta_j\cdot \zeta_j=0$, the function $w_j=e^{\zeta_j\cdot x}(1+\psi_j)$ is a CGO solution. The function $\psi_j\in H^1_\text{loc}(\Rs^d)$ has to satisfy the equation 
\begin{equation}\label{eq:psi_equation}
(-\Delta_\zeta + q_j)\psi_j = -q_j.
\end{equation}
If we choose $\zeta_1$ and $\zeta_2$ such that $\zeta_1+\zeta_2=i\xi$ and replace in \eqref{eq:Density_Laplace}, then we get
\begin{multline}\label{eq:FT_q1-q2}
\int_{\Rs^d}(q_1-q_2)e^{i\xi\cdot x} = \int e^{i\xi\cdot x}\psi_2q_2-\int e^{i\xi\cdot x}\psi_1q_1+ \\
+\int e^{i\xi\cdot x}\psi_1 \Delta_\zeta\psi_2 - \int e^{i\xi\cdot x}\psi_2\Delta_\zeta\psi_1 .
\end{multline}
We expect that the functions $\psi_j$ are negligible, so if we ignore them, we would get that $\widehat{q}_1(\xi)=\widehat{q}_2(\xi)$ for every $\xi\in \Rs^d$, which implies $\gamma_1=\gamma_2$.


The space $H^1_\text{loc}(\Rs^d)$ does not seem to be the best suited space to solve \eqref{eq:psi_equation}. Following Haberman and Tataru \cite{MR3024091}, we use the spaces $\dot{X}^b_\zeta$ and $X^b_\zeta$. Since the inclusion $\dot{X}^\frac{1}{2}_{\zeta}\subset H^1_\text{loc}(\Rs^d)$ holds true, then we have
\begin{equation*}
(-\Delta_\zeta+q):\dot{X}^\frac{1}{2}_{\zeta}\to \dot{X}^{-\frac{1}{2}}_{\zeta}.
\end{equation*}
The goal is to find a pair of sequences $\{\zeta_{1,k}\}$ and $\{\zeta_{2,k}\}$ that satisfy the following conditions:
\begin{itemize}
\item $\zeta_{1,k}+\zeta_{2,k}=i\xi$ and $\abs{\zeta_{j,k}}\to\infty$ as $k\to\infty$. 
\item There exist solutions $\psi_{j,k}\in \dot{X}^\frac{1}{2}_{\zeta_{j,k}}$ of the equation \eqref{eq:psi_equation}.
\item $\norm{\psi_{j,k}}_{\dot{X}^\frac{1}{2}_{\zeta_{j,k}}}\to 0$ as $k\to\infty$.
\end{itemize} 

To solve \eqref{eq:psi_equation} we write $(I-\Delta_\zeta^{-1}q)\psi = \Delta_\zeta^{-1}q$. To invert the operator $(I-\Delta_\zeta^{-1}M_q)$, where $M_q:u\mapsto qu$, it suffices to prove that $\norm{M_q}_{\dot{X}^\frac{1}{2}_{\zeta}\to \dot{X}^{-\frac{1}{2}}_{\zeta}}\le c<1$. We also have the upper bound
\begin{equation*}
\norm{\psi}_{\dot{X}^\frac{1}{2}_\zeta} \le \norm{(I-\Delta_\zeta^{-1}M_q)^{-1}}_{\dot{X}^\frac{1}{2}_\zeta\to \dot{X}^\frac{1}{2}_\zeta}\norm{q}_{\dot{X}^{-\frac{1}{2}}_\zeta}\le \frac{1}{1-c}\norm{q}_{\dot{X}^{-\frac{1}{2}}_\zeta}.
\end{equation*}
Then, we can rewrite the goal as: to find a pair of sequences $\{\zeta_{1,k}\}$ and $\{\zeta_{2,k}\}$ that satisfy the following conditions:
\begin{itemize}
\item $\zeta_{1,k}+\zeta_{2,k}=i\xi$ and $\abs{\zeta_{j,k}}\to\infty$ as $k\to\infty$.
\item $\norm{M_{q_j}}_{\dot{X}^\frac{1}{2}_{\zeta_{j,k}}\to \dot{X}^{-\frac{1}{2}}_{\zeta_{j,k}}}\le c<1$ for sufficiently large $k$.
\item $\norm{q_j}_{\dot{X}^{-\frac{1}{2}}_{\zeta_{j,k}}}\to 0$ as $k\to\infty$.
\end{itemize}

To find the sequences $\{\zeta_{1,k}\}$ and $\{\zeta_{2,k}\}$, Haberman proved that the expected value of $\norm{M_{q_j}}_{\dot{X}^\frac{1}{2}_{\zeta}\to \dot{X}^{-\frac{1}{2}}_{\zeta}}$ and $\norm{q_j}_{\dot{X}^{-\frac{1}{2}}_{\zeta}}$ over $\abs{\zeta}\sim M\ge 1$ is small; see Theorem~\ref{thm:Haberman_vanishing}. The reader can see in \cite[sec. 7]{MR3397029} how to find the sequences from the vanishing of the expected value. 

To prove the vanishing of the expected value of $\norm{q_j}_{\dot{X}^{-\frac{1}{2}}_{\zeta}}$, it suffices to assume that $\nabla\log\gamma_j\in L^d(\Rs^d)$, so we will not turn our attention to it.

To control $\norm{M_{q_j}}_{\dot{X}^\frac{1}{2}_{\zeta}\to \dot{X}^{-\frac{1}{2}}_{\zeta}}$ we write $q =\frac{1}{2}\Delta\log\gamma +\frac{1}{4}\abs{\nabla\log\gamma}^2 = \frac{1}{2}\div{\vc{f}}+\frac{1}{2}\abs{\vc{f}}^2$, where the components of $\vc{f}=(f^1,\ldots,f^n)$ are in $W^{s-1,p}(\Rs^d)$. We can divide $M_q$ into terms $M_{\partial_if}$ and $M_{\abs{f}^2}$. Haberman proved that the expected value of $\norm{M_{\abs{f}^2}}_{\dot{X}^\frac{1}{2}_{\zeta}\to \dot{X}^{-\frac{1}{2}}_{\zeta}}$ goes to zero if $f\in L^d(\Rs^d)$, so we are left with  $\norm{M_{\partial_i f}}_{\dot{X}^\frac{1}{2}_{\zeta}\to \dot{X}^{-\frac{1}{2}}_{\zeta}}$.

The estimates for $\norm{M_{\partial_i f}}_{{\dot{X}^\frac{1}{2}_{\zeta}}\to \dot{X}^{-\frac{1}{2}}_{\zeta}}$ are not strong enough to get the vanishing in the limit for $f\in L^d$. To prove Theorem~\ref{thm:vanishing_Expectation}, we assume the following theorem, which we will prove in the next section.

\begin{theorem}\label{thm:Main_Theorem}
Suppose that $f$ is supported in the unit ball. If $f\in W^{\frac{d-5}{2p}+, p}(\Rs^d)$ for $d\le p < \infty$, then
\begin{equation}
\dashint_M\int_{O_d} \norm{M_{\partial_i f}}_{\XU{\frac{1}{2}}\to \XU{-\frac{1}{2}}}\,dUd\tau\le C\norm{f}_{\frac{d-5}{2p}+, p}.
\end{equation}
\end{theorem}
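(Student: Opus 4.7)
By duality, the operator norm equals the best constant $C$ in the trilinear inequality
\[
\abs*{\int_{\Rs^d} (\partial_i f)\,u\,\bar v\,dx} \le C\,\norm{u}_{\dot X^{1/2}_\zeta}\norm{v}_{\dot X^{1/2}_\zeta}.
\]
The plan is to combine a dyadic decomposition of $u,v$ adapted to the characteristic $(d-2)$-sphere $\Sigma_\zeta$ of radius $\tau$ with a Littlewood--Paley decomposition of $f$. I would write $u=\sum_\mu u_\mu$ and $v=\sum_\nu v_\nu$ with $\widehat{u_\mu},\widehat{v_\nu}$ supported in dyadic $\mu$- and $\nu$-annular neighborhoods of $\Sigma_\zeta$; since $\abs{p_\zeta(\xi)}\sim\tau\cdot\mathrm{dist}(\xi,\Sigma_\zeta)$ close to $\Sigma_\zeta$, this yields the key identity $\norm{u_\mu}_{\dot X^{1/2}_\zeta}^2\sim\tau\mu\,\norm{u_\mu}_2^2$. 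Also decompose $f=\sum_\lambda P_\lambda f$; frequency-support considerations for $\widehat{u_\mu}*\overline{\widehat{v_\nu}}$ meeting $\mathrm{supp}\,\widehat{P_\lambda f}$ leave only finitely many admissible $\lambda$ per $(\mu,\nu)$.

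For each admissible triple $(\lambda,\mu,\nu)$, I would cover $\Sigma_\zeta$ by caps of diameter $\sim(\nu\tau)^{1/2}$, select well-separated pairs, and apply Theorem~\ref{thm:Bilinear} after rescaling $\Sigma_\zeta$ to a unit sphere. In the bilinear regime $\mu\le\nu\le(\mu\tau)^{1/2}$ this produces, upon undoing the rescaling,
\[
\norm{u_\mu v_\nu}_{p'}\lesssim_\delta \tau^{\alpha}\,\mu^{\frac{d}{2p}-\delta}\,\nu^{\frac{1}{p}-\delta}\,\norm{u_\mu}_2\,\norm{v_\nu}_2,
\]
with $\alpha$ fixed by dimensional analysis; in the complementary regime $\nu>(\mu\tau)^{1/2}$, where the theorem offers no gain, I fall back on a per-cap Tomas--Stein estimate. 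Pairing via H\"older with $\norm{\partial_i P_\lambda f}_p\le\lambda\norm{P_\lambda f}_p$ and converting the $L^2$-norms of $u_\mu,v_\nu$ into $\dot X^{1/2}_\zeta$-norms via the identity above yields, per triple, a bound that sums geometrically in $\mu,\nu$ (the $\delta$-room of Theorem~\ref{thm:Bilinear} absorbs the logarithmic summation losses). The total contribution of $P_\lambda f$ is then bounded by $\lambda^{\frac{d-5}{2p}+}\norm{P_\lambda f}_p$ times $\norm{u}_{\dot X^{1/2}_\zeta}\norm{v}_{\dot X^{1/2}_\zeta}$, and an $\ell^p$-summation in $\lambda$ recovers $\norm{f}_{W^{(d-5)/(2p)+,p}}$.

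The averaging over $U\in O_d$ and $\tau\in[M,2M]$ is then straightforward: the per-$(U,\tau)$ constants depend on $U$ only through the geometry of $\Sigma_{\zeta(U,\tau)}$, which is rotation-invariant, and on $\tau$ only through $[M,2M]$-bounded factors that can be absorbed into the dyadic sum; the $L^p$-norm of $P_\lambda f$ does not depend on $(U,\tau)$ at all, so the average passes through trivially.

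The main obstacle is the non-bilinear regime $\nu>(\mu\tau)^{1/2}$: there the curvature of $\Sigma_\zeta$ on the $\nu$-scale decouples from the $\mu$-scale and Theorem~\ref{thm:Bilinear} offers no improvement over Tomas--Stein. Balancing this unfavorable range against the bilinear range is exactly what fixes the sharp regularity exponent, and the $\frac{1}{2p}$ improvement over Haberman's threshold $\frac{d-4}{2p}$ is precisely the gain of the bilinear theorem over Tomas--Stein at the balanced scale $\mu\sim\nu$.
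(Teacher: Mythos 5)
Your structural outline — duality to a trilinear pairing, parabolic rescaling so that $\Sigma_\zeta$ becomes the unit $(d-2)$-sphere, a dyadic decomposition of $u$ and $v$ into annular shells $u_\mu,v_\nu$ and of $f$ into Littlewood--Paley pieces $P_\lambda f$, then the bilinear theorem in the regime $\mu\le\nu<\mu^{1/2}$ with a Tomas--Stein fallback otherwise — does match the paper's approach. However, there is a genuine gap in your treatment of the average over $(U,\tau)$. You assert that the $L^p$-norm of $P_\lambda f$ is independent of $(U,\tau)$ and hence the average passes through trivially. This conflates the scaled projection $P_{\lambda,\nu}$ (acting on $f_{\tau U}$, after the change of variables) with the corresponding projection on the original $f$: undoing the scaling turns it into $P^U_{\tau\lambda,\tau\nu}$, which projects to the slab $\abs{\xi}\sim\tau\lambda$, $\abs{\inner{Ue_1}{\xi}}\le 2\tau\nu$, and this very much depends on $U$ and $\tau$. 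The entire point of the averaging step is that $\bigl(\int_{O_d}\norm{P^U_{\tau\lambda,\tau\nu}f}_p^p\,dU\bigr)^{1/p}\le C(\nu/\lambda)^{1/p}\norm{f}_p$ (Haberman's Lemma~5.1), and it is exactly this extra gain of $(\nu/\lambda)^{1/p}$ that makes the sums over $\mu,\nu,\lambda$ close at the threshold $\frac{d-5}{2p}$. A pointwise-in-$(U,\tau)$ estimate with no averaging gain would leave you with divergent sums (or, at best, Haberman's weaker exponent). Moreover, for the antipodal cap contributions the projections are the finer $P^U_{\tau\lambda,\tau\nu,k,k'}$, and the corresponding averaging gain $(\nu/\lambda)^{1/p}\rho^{2/p}$ is Ham--Kwon--Lee's Lemma~4.3; your sketch does not distinguish antipodal from neighboring caps at all and so misses this part entirely.

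A secondary but also substantive omission: you cover $\Sigma_\zeta$ by caps at a single scale $\sim(\nu\tau)^{1/2}$, but the argument actually needs a Whitney-type decomposition at all intermediate scales $\nu^{1/2}\lesssim\rho\lesssim1$ (after rescaling), with transversality at each scale and parabolic rescaling to reduce to the separation $\sim1$ case of Theorem~\ref{thm:Bilinear}; the transversal terms at scale $\rho$ only pair with $P_\lambda f$ when $\lambda\sim\rho$ (neighboring) or $\lambda\sim1$ (antipodal), and this frequency-support bookkeeping is what organizes the final summation. Finally, you also skip the non-characteristic frequency contribution ($Q_h$ projections), which has to be handled separately with \eqref{eq:Bound_X}--\eqref{eq:TS_X}; it is not absorbed into the cap decomposition.
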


\begin{proof}[Proof of Theorem~\ref{thm:vanishing_Expectation}]
Since $f$ is compactly supported, then $\norm{M_{\partial_i f}}_{\dot{X}^\frac{1}{2}_\zeta\to \dot{X}^{-\frac{1}{2}}_\zeta}\lesssim \norm{M_{\partial_i f}}_{X^\frac{1}{2}_\zeta\to X^{-\frac{1}{2}}_\zeta}$; see \cite[Lemma 2.2(3-4)]{MR3024091}. We estimate $M_g$ by duality as
\begin{equation*}
\abs{\inner{gu}{v}}\le \norm{g}_\infty\norm{u}_2\norm{v}_2\le \frac{1}{\abs{\zeta}}\norm{g}_\infty\norm{u}_{X^\frac{1}{2}_\zeta}\norm{v}_{X^\frac{1}{2}_\zeta}.
\end{equation*}
For some $A\le 1$ to be fixed later, we define $g=P_{\le A}\partial_i f$, where $P_{\le A}$ is the projection to frequencies $\lesssim A$. By Young inequality for convolutions we get
\begin{equation*}
\norm{M_g}_{X^\frac{1}{2}_\zeta\to X^{-\frac{1}{2}}_\zeta}\le \frac{1}{\abs{\zeta}}\norm{g}_\infty \lesssim \frac{A^2}{\abs{\zeta}}\norm{f}_d.
\end{equation*}
The expected value is thus bounded as
\begin{align*}
\dashint\limits_M\int_{O_d} \norm{M_{\partial_i f}}_{\XU{\frac{1}{2}}\to \XU{-\frac{1}{2}}}\,dUd\tau &\lesssim  \frac{A^2}{M}\norm{f}_d+\dashint\limits_M\int_{O_d} \norm{M_{P_{>A}\partial_i f}}_{\XU{\frac{1}{2}}\to \XU{-\frac{1}{2}}}\,dUd\tau \\
&\lesssim \frac{A^2}{M}\norm{f}_d+\norm{P_{>A}f}_{\frac{d-5}{2p}+, p}.
\end{align*}
If we choose $A=M^\frac{1}{4}$ and let $M\to\infty$, then we get the vanishing.
\end{proof}

\section{Estimates for the Expected Value}\label{sec:Average_Theorem}

In this and the next section, we use duality to get an upper bound of $\norm{M_{\partial_j f}}_{X^\frac{1}{2}_{\zeta(U,\tau)}\to X^{-\frac{1}{2}}_{\zeta(U,\tau)}}$ in terms of $f$, $U$ and $\tau$. We want to get an upper bound
\begin{equation}\label{eq:duality_norm_multiplication}
\abs{\inner{(\partial_j f)u}{v}} = \abs{\int_{\Rs^d}(\partial_j f)u\bar{v}\,dx}\le A(U,\tau,f)\norm{u}_{\XU{\frac{1}{2}}}\norm{v}_{\XU{\frac{1}{2}}},
\end{equation}
with a constant $A(U,\tau,f)$ depending on some quantity related to $\norm{f}_{W^{s,p}}$ for $s=\frac{d-5}{2p}+$ and $d\le p<\infty$.

The characteristic set $\Sigma_\zeta$ of $p_\zeta(\xi)=-\abs{\xi}^2+2i\zeta\cdot\xi$, the symbol of $\Delta_\zeta$, is a $(d-2)$-sphere in the hyperplane $\{\xi\mid\inner{Ue_1}{\xi}=0\}$, with center $\tau Ue_2$ and radius $\tau\ge 1$. If $d(\xi,\Sigma)$ denotes the distance from $\xi$ to $\Sigma_\zeta$, then
\begin{equation*}
\abs{p_\zeta(\xi)}\sim
\begin{cases}
\tau d(\xi,\Sigma_\zeta), &\text{for } d(\xi,\Sigma_\zeta)\le\frac{1}{10}\tau,
\\
\tau^2 + \abs{\xi}^2, &\text{for } d(\xi,\Sigma_\zeta)>\frac{1}{10}\tau
\end{cases}
\end{equation*}
We break up the frequencies accordingly into characteristics and non-characteristics, and define the corresponding projections as
\begin{align*}
(Q_lf)^\wedge(\xi) &:=\zeta(\tau^{-1}d(\xi,\Sigma_\zeta))\widehat{f}(\xi) \\
(Q_hf)^\wedge(\xi) &:=(1-\zeta(\tau^{-1}d(\xi,\Sigma_\zeta)))\widehat{f}(\xi),
\end{align*}
where $\zeta\in C_c^\infty(\Rs)$ is supported inside $(-\frac{1}{10}, \frac{1}{10})$. It follows that
\begin{gather}
\norm{Q_h u}_2\le \tau^{-1}\norm{u}_{\XU{\frac{1}{2}}}\label{eq:Bound_X}\\
\norm{\partial_j Q_h u}_2\le \norm{u}_{\XU{\frac{1}{2}}}\label{eq:Bound_derivative_X}.
\end{gather}

In Lemma 3.3 of \cite{MR3397029} Haberman proved, using Tomas-Stein inequality, that
\begin{equation}\label{eq:TS_X}
\norm{u}_\frac{2d}{d-2}\lesssim \norm{u}_{\XU{\frac{1}{2}}}.
\end{equation}
With the help of inequalities \eqref{eq:Bound_X}, \eqref{eq:Bound_derivative_X} and \eqref{eq:TS_X}, we can control in \eqref{eq:duality_norm_multiplication} all the terms involving non-characteristic frequencies. In fact, 
\begin{multline*}
\inner{(\partial_j f)u}{v} = \inner{(\partial_jf)Q_hu}{Q_hv}+\inner{(\partial_jf)Q_hu}{Q_lv}+ \\ 
+\inner{(\partial_jf)Q_lu}{Q_hv}+\inner{(\partial_jf)Q_lu}{Q_lv}.
\end{multline*}
For the first term at the right, after integration by parts, we have
\begin{align}
\abs{\inner{(\partial_j f)Q_hu}{Q_hv}}&\le \norm{f}_d(\norm{\partial_j Q_hu}_2\norm{Q_hv}_\frac{2d}{d-2}+ \notag\\
&\hspace{4cm}+\norm{Q_hu}_\frac{2d}{d-2}\norm{\partial_j Q_hv}_2) \notag\\
&\lesssim \norm{f}_d\norm{u}_{\XU{\frac{1}{2}}}\norm{v}_{\XU{\frac{1}{2}}}.\label{eq:Q_hQ_h}
\end{align}
For the mixed terms we have
\begin{align}
\abs{\inner{(\partial_jf)Q_hu}{Q_lv}}&\le \norm{f}_d(\norm{\partial_jQ_hu}_2\norm{Q_l v}_\frac{2d}{d-2}+ \notag\\
&\hspace{4cm}+\norm{Q_hu}_2\norm{\partial_jQ_lv}_\frac{2d}{d-2}) \notag\\
&\lesssim \norm{f}_d\norm{u}_{\XU{\frac{1}{2}}}\norm{v}_{\XU{\frac{1}{2}}},\label{eq:Q_lQ_h}
\end{align}
where we used the localization of $Q_lv$ to frequencies $\le 5\tau$, so that $\norm{\partial_j Q_lv}_\frac{2d}{d-2}\lesssim \tau\norm{Q_lv}_\frac{2d}{d-2}$; this follows from Young inequality. We are left then with the characteristic frequencies.

We assume that the support of the Fourier transform of $u$ and $v$ lie in a $\frac{1}{10}$-neighborhood of $\Sigma_\zeta$. We define the transformation 
\begin{equation}\label{eq:Dilation_rotation_u}
u_{\tau U}(x):=\tau^{-d}u(\tau^{-1}Ux),
\end{equation}
so that the frequencies of $u_{\tau U}$ are supported in a $\frac{1}{10}$-neighborhood of the $S^{d-2}$ sphere centered at $e_2$ in the hyperplane normal to $e_1$. The Fourier transform of $u_{\tau U}$ is $\widehat{u}_{\tau U}(\xi) = \widehat{u}(\tau U \xi)$, and the $X^{b}_{\zeta(U,\tau)}$-norm scales as
\begin{equation}\label{eq:norm_Change_Variables}
\norm{u}_{X^b_{\zeta(U,\tau)}} = \tau^{\frac{d}{2}+2b}\norm{u_{\tau U}}_{\Xb{b}}.
\end{equation}
We change variables in the pairing \eqref{eq:duality_norm_multiplication} to get
\begin{align}
\inner{(\partial_jf)u}{v} &= \tau^{-d}\int (\partial_jf)(\tau^{-1} Ux)u(\tau^{-1} Ux)\bar{v}(\tau^{-1} Ux)\,dx \notag\\
&= \tau^{2d+1}\int_{B_{\tau}}(\partial_jf_{\tau U})u_{\tau U} \bar{v}_{\tau U}\,dx \notag\\
&= \tau^{2d+1}\inner{(\partial_{Ue_j}f_{\tau U})u_{\tau U}}{v_{\tau U}}, \label{eq:pairing_Change_Variables}
\end{align}
where we used the identity
\begin{equation*}
(\partial_j f)(\tau^{-1} Ux) = \int \xi_j \widehat{f}(\xi) e^{i (\tau^{-1}Ux)\cdot \xi}\,d\xi = \tau^{d+1}(\partial_{Ue_j} f_{\tau U})(x).
\end{equation*}
Therefore, we assume that the characteristic sphere $S^{d-2}$ lies in the normal plane to $e_1$, has radius 1 and is centered at $e_2$. We assume also that the function $f$ is supported in $B_\tau(0)$.

We apply the Hardy-Littlewood decomposition to $f=\sum_{\tau^{-1}\le \lambda}P_\lambda f$, and decompose $u$ and $v$ into dyadic projections $u_\mu$ and $v_\nu$, where $(u_\mu)^\wedge = \zeta(\mu^{-1}d(\xi,\Sigma_\zeta))\widehat{u}$ and $\zeta\in C_c^\infty(\Rs)$ is supported in $(\frac{1}{2},2)$. Then, the pairing \eqref{eq:duality_norm_multiplication} gets into
\begin{align}
\inner{(\partial_w f)u}{v} &=\sum_{\tau^{-1}\le \lambda,\mu,\nu \lesssim 1} \inner{(\partial_wP_{\lambda, \sup(\mu,\nu)}f)u_\mu}{v_\nu} \notag\\
&=\sum_{\substack{\tau^{-1}\le \lambda \lesssim 1 \\ \tau^{-1}\le \mu\le \nu\lesssim 1}} \inner{(\partial_w P_{\lambda, \nu}f)u_\mu}{v_\nu}+\sum_{\substack{\tau^{-1}\le \lambda \lesssim 1 \\ \tau^{-1}\le \mu> \nu\lesssim 1}}\cdots, \label{eq:Decomposition_Distance}
\end{align}
where $\partial_w$ is the derivative in some direction $w$, and $P_{\lambda, \sup(\mu,\nu)}$ is the projection to frequencies $|\xi|\sim \lambda$ and $|\xi_1|\lesssim \sup(\mu,\nu)$. By symmetry, we can assume that $\mu\le\nu$.

We use Toma-Stein to control the low frequency terms, $\lambda\lesssim \nu^\frac{1}{2}$, and the terms with very different characteristic regions, $\mu^\frac{1}{2}\le\nu$.

\begin{theorem}\label{thm:Bilinear_TS}
If $f_\mu$ and $g_\nu$ are functions in $\Rs^n$, and their Fourier transform are supported in a $\mu$- and $\nu$-neighborhood of $S^{n-1}$ respectively, where $\mu\le\nu$, then
\begin{equation}\label{eq:Bilinear_TS}
\norm{f_\mu g_\nu}_{p'}\lesssim \mu^{\frac{n+1}{2p}}\norm{f_\mu}_2\norm{g_\nu}_2, \qquad \text{for } 1\le p'\le \frac{n+1}{n}. 
\end{equation}
\end{theorem}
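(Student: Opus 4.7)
The plan is to recognize that the claimed inequality only involves $\mu$ and not $\nu$, which suggests that $g_\nu$ plays no structural role beyond being an $L^2$ function: only its $L^2$ norm is needed, and its Fourier support is used only to give meaning to a dyadic decomposition. Concretely, I would apply H\"older's inequality with exponents satisfying $1/p' = 1/p_1 + 1/2$, giving
\begin{equation*}
\norm{f_\mu g_\nu}_{p'} \le \norm{f_\mu}_{p_1}\norm{g_\nu}_2,
\end{equation*}
and then the task reduces to the \emph{linear} estimate $\norm{f_\mu}_{p_1}\lesssim \mu^{\frac{n+1}{2p}}\norm{f_\mu}_2$. For $p'=1$ we take $p_1=2$ (the estimate is then trivial), and for $p'=\frac{n+1}{n}$ we take $p_1=\frac{2(n+1)}{n-1}$, which is precisely the Tomas--Stein dual endpoint.

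The heart of the proof is the endpoint case. I would decompose the $\mu$-thickening of $S^{n-1}$ into spheres of radius $r\in[1-\mu,1+\mu]$. Writing $F_r(\omega)=\widehat{f_\mu}(r\omega)$ and using polar coordinates,
\begin{equation*}
f_\mu(x) = \int_{|r-1|\le \mu} (F_r\,dS)^{\vee}(x)\,r^{n-1}\,dr.
\end{equation*}
By Minkowski's inequality and the Tomas--Stein inequality applied at the dual endpoint to each sphere of radius $r\sim 1$ (with constants uniform in $r$, by scaling),
\begin{equation*}
\norm{f_\mu}_{\frac{2(n+1)}{n-1}} \lesssim \int_{|r-1|\le \mu}\norm{F_r}_{L^2(S^{n-1})}\,dr.
\end{equation*}
Cauchy--Schwarz in $r$ then yields a factor of $\mu^{1/2}$, and Plancherel in polar coordinates identifies the remaining $L^2$ norm of $F_r$ in $r$ and $\omega$ with $\norm{f_\mu}_2$. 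This gives the endpoint estimate $\norm{f_\mu}_{\frac{2(n+1)}{n-1}}\lesssim \mu^{1/2}\norm{f_\mu}_2$.

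The range $1\le p'\le \frac{n+1}{n}$ and the exponent $\frac{n+1}{2p}$ are then recovered by Riesz--Thorin interpolation between the trivial bound $\norm{f_\mu}_2 \le \norm{f_\mu}_2$ (at $p_1=2$, no $\mu$-gain) and the endpoint bound just proved (at $p_1=\frac{2(n+1)}{n-1}$, gain $\mu^{1/2}$). A short computation shows that the interpolation parameter $\theta$ corresponding to $1/p_1 = (1-\theta)/2 + \theta(n-1)/(2(n+1))$ gives gain $\mu^{\theta/2} = \mu^{(n+1)(1-1/p')/2} = \mu^{\frac{n+1}{2p}}$, matching the statement exactly.

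The only real obstacle is the uniformity of the Tomas--Stein constant across the spheres of radius $r\in[1-\mu,1+\mu]$; this is standard because the curvature and the $L^2$--affine measure scale trivially in $r$, so the constant can be made independent of $r$ for $r$ in any compact subset of $(0,\infty)$. Everything else is H\"older, Minkowski, Cauchy--Schwarz, Plancherel, and interpolation, none of which should require any delicate estimate beyond the classical Tomas--Stein inequality stated in the introduction.
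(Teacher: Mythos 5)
Your proof is correct and follows exactly the paper's argument: H\"older to reduce to the linear estimate $\norm{f_\mu}_{2p'/(2-p')}\lesssim \mu^{\frac{n+1}{2p}}\norm{f_\mu}_2$, then the $\mu^{1/2}$ endpoint gain at $p_1 = \tfrac{2(n+1)}{n-1}$ via radial slicing, Minkowski, Tomas--Stein on each sphere, and Cauchy--Schwarz, followed by interpolation against the trivial $L^2$ bound. The only thing you add beyond the paper is the (standard) remark on uniformity of the Tomas--Stein constant in $r\in[1-\mu,1+\mu]$, which the paper leaves implicit.
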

\begin{proof}
We use H\"{o}lder to get
\begin{equation}\label{eq:proof_TS_bilinear}
\norm{f_\mu g_\nu}_{p'} \le \norm{f_\mu}_{2p'/(2-p')}\norm{g_\nu}_2.
\end{equation}
Since $1\le p'\le \frac{n+1}{n}$ , then $2\le 2p'/(2-p')\le 2\frac{n+1}{n-1}$, and the latter is the Tomas-Stein exponent. To bound the term $\norm{f_\mu}_r$, for $r=\frac{2p'}{2-p'}$, we interpolate between $p'=2$ and $p'=2\frac{n+1}{n-1}$. 

The point $p'=2$ is immediate. For $p'=2\frac{n+1}{n-1}$, we write $\widehat{f}_\mu$ as an average over spheres
\begin{equation*}
f_\mu(x)=\int r^{n-1}\int_{S^{n-1}} \widehat{f}_\mu(r\theta)e(\inner{rx}{\theta})\,d\theta dr := \int r^{n-1}\ext{f_\mu^r}(rx)\,dr
\end{equation*}
We apply Minkowski, Tomas-Stein and Cauchy-Schwarz to find $\norm{f_\mu}_{2\frac{n+1}{n-1}}\le C\mu^\frac{1}{2}\norm{f_\mu}_2$; this leads to
\begin{equation*}
\norm{f_\mu}_r\lesssim \mu^{\frac{n+1}{2}(\frac{1}{2}-\frac{1}{r})}\norm{f_\mu}_2, \qquad \text{for } 2\le r\le 2\frac{n+1}{n-1}.
\end{equation*}
We replace it in \eqref{eq:proof_TS_bilinear} to get
\begin{equation*}
\norm{f_\mu g_\nu}_{p'}\lesssim  \mu^{\frac{n+1}{2p}}\norm{f_\mu}_2\norm{g_\nu}_2,
\end{equation*}
which is what we wanted.
\end{proof}

By H\"{o}lder, we can bound each term in \eqref{eq:Decomposition_Distance} as
\begin{equation}\label{eq:holder_Single_Term}
\abs{\inner{(\partial_wP_{\lambda, \nu}f)u_\mu}{v_\nu}}\le \lambda\norm{P_{\lambda, \nu}f}_p\norm{u_\mu v_\nu}_{p'}.
\end{equation}
To bound the bilinear term, we begin by writing it as
\begin{equation}\label{eq:raw_bilinear_1}
\int |u_\mu v_\nu|^{p'}\,dx = \iint |u_\mu(x_1,\tilde{x}) v_\nu(x_1,\tilde{x})|^{p'}\,d\tilde{x}dx_1.
\end{equation}
We fix $x_1$ as a parameter and define the function $u_\mu^{x_1}(\tilde{x})= u_\mu(x_1,\tilde{x})$; its Fourier transform is the term in parentheses in the formula
\begin{equation*}
u_\mu(x_1,\tilde{x})=\int\Big(\widehat{u}_\mu(x)e^{ix_1\cdot\xi_1}\Big)e^{i\tilde{x}\cdot\tilde{\xi}}\,d\tilde{\xi} = \int \widehat{u}_\mu^{x_1}(\tilde{\xi})e^{i\tilde{x}\cdot\tilde{\xi}}\,d\tilde{\xi}.
\end{equation*}
The support of $\widehat{u}_\mu^{x_1}$ lies in a $\mu$-neighborhood of the sphere $S^{d-2}\subset\Rs^{d-1}$. Hence, we can apply Theorem~\ref{thm:Bilinear_TS} with $n=d-1$ to the inner integral at the right of \eqref{eq:raw_bilinear_1} to get
\begin{equation}\label{eq:Bilinear_with_x_1}
\int |u_\mu v_\nu|^{p'}\,dx \le \mu^{p'\frac{d}{2p}}\int \norm{u_\mu(x_1,\cdot)}_2^{p'}\norm{v_\nu(x_1,\cdot)}_2^{p'}\,dx_1.
\end{equation}
Since $\widehat{u}_\mu$ is supported in the $\mu$-neighborhood of the hyperplane normal to $e_1$, then we can use the formula $u_\mu = u_\mu*_1\phi_\mu$, where $\phi_\mu(x) = \mu\phi(\mu x)$ and $\phi:\Rs\mapsto\Rs_+$ is a smooth function whose Fourier transform equals one in a $\mu$-neighborhood of the origin. Hence, by Minkowski we have
\begin{align*}
\norm{u_\mu(x_1,\cdot)}_2 &= \Big(\int \Big|\int u_\mu(x_1-y_1,\tilde{x})\phi_\mu(y_1)\,dy_1\Big|^2\,d\tilde{x}\Big)^{1/2} \\
&\le \int\norm{u_\mu(x_1-y_1,\cdot)}_2\phi_\mu(y_1)\,dy_1\\
&=(\norm{u_\mu^{z_1}}_{L^2_{\tilde{x}}}*_1\phi_\mu)(x_1).
\end{align*}
This fact and the next lemma allow us to bound the integral at the right of \eqref{eq:Bilinear_with_x_1}.

\begin{lemma}\label{lemma:axis_x1}
Let $a$ and $b$ be two functions in the real line, then
\begin{equation}
\norm{(a*\phi_\mu)b}_{p'}\le C\mu^\frac{1}{p}\norm{a}_2\norm{b}_2, \qquad \text{for } 1\le p'\le 2.
\end{equation}
The inequality is best possible in $\mu$.
\end{lemma}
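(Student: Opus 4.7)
The plan is to derive the estimate from Hölder's inequality followed by Young's convolution inequality, and then confirm sharpness in $\mu$ by a direct extremizer. The argument is one-dimensional and the scaling $\mu^{1/p}$ will enter only through the $L^{p'}$-mass of $\phi_\mu$.

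First, I would apply Hölder with the pair $(r,2)$ defined by $1/p' = 1/r + 1/2$, which forces $r\in[2,\infty]$ for $p'\in[1,2]$, to obtain $\norm{(a*\phi_\mu)b}_{p'} \le \norm{a*\phi_\mu}_r\norm{b}_2$. Next, Young's convolution inequality with the triple $(2,s;r)$ satisfying $1/2 + 1/s = 1 + 1/r$ forces $s = p'$, and gives $\norm{a*\phi_\mu}_r \le \norm{a}_2\norm{\phi_\mu}_{p'}$. Since $\phi_\mu(x) = \mu\phi(\mu x)$, a change of variables shows $\norm{\phi_\mu}_{p'} = \mu^{1-1/p'}\norm{\phi}_{p'} = \mu^{1/p}\norm{\phi}_{p'}$, so concatenating the two displayed bounds yields the claim with $C = \norm{\phi}_{p'}$.

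For sharpness, I would test with $a = b = \chi_{[0,1/\mu]}$, so that $\norm{a}_2 = \norm{b}_2 = \mu^{-1/2}$. Since $\phi_\mu$ has $L^1$-mass $\norm{\phi}_1$ and is essentially supported in an interval of width $1/\mu$, the convolution $a*\phi_\mu$ is of size $\sim 1$ on an interior sub-interval of $[0,1/\mu]$ of comparable length; hence $\norm{(a*\phi_\mu)b}_{p'} \sim \mu^{-1/p'}$, which matches $\mu^{1/p}\norm{a}_2\norm{b}_2 = \mu^{1/p-1} = \mu^{-1/p'}$ up to constants. Thus the $\mu$-dependence cannot be improved.

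The proof is essentially routine; the only thing to monitor is the bookkeeping of the Hölder and Young exponents, whose endpoints are $r = 2,\,s = 1$ at $p' = 1$ and $r = \infty,\,s = 2$ at $p' = 2$, both safely within the admissible ranges throughout $p'\in[1,2]$. I expect no genuine obstacle beyond this, since the gain $\mu^{1/p}$ is simply the rescaling of a rank-one bump of height $\mu$.
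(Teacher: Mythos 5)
Your proof is correct and follows the same route as the paper: Hölder to split off $\norm{b}_2$, then Young with exponent $p'$ to reduce to $\norm{\phi_\mu}_{p'}=\mu^{1/p}\norm{\phi}_{p'}$, and the same interval-of-length-$\mu^{-1}$ example for sharpness. No substantive difference.
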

\begin{proof}
We use H\"{o}lder and Young inequalities to get
\begin{equation*}
\norm{(a*\phi_\mu)b}_{p'}\le \norm{a*\phi_\mu}_{2p'/(2-p')}\norm{b}_2\le \norm{\phi_\mu}_{p'}\norm{a}_2\norm{b}_2,
\end{equation*}
where $\norm{\phi_\mu}_{p'} = \mu^{\frac{1}{p}}\norm{\phi_1}_{p'}$. The example $a=b=\ind_{(-\mu^{-1},\mu^{-1})}$ shows that the constant $\mu^\frac{1}{p}$ is best possible.
\end{proof}

With the aid of Lemma~\ref{lemma:axis_x1} and $\norm{u_\mu}_2\lesssim\mu^{-\frac{1}{2}}\norm{u}_{\Xb{\frac{1}{2}}}$, we continue  \eqref{eq:Bilinear_with_x_1} as
\begin{equation}\label{eq:Total_Bilinear_TS}
\norm{u_\mu v_\nu}_{p'}  \le \mu^\frac{d+2}{2p} \norm{u_\mu}_2\norm{v_\nu}_2\lesssim \mu^{\frac{d+2}{2p}-\frac{1}{2}}\nu^{-\frac{1}{2}}\norm{u}_{\Xb{\frac{1}{2}}}\norm{v}_{\Xb{\frac{1}{2}}}.
\end{equation}
Furthermore, when we are restricted to low frequencies $\lambda\lesssim \nu^\frac{1}{2}$, we can use this bound and \eqref{eq:holder_Single_Term} in the pairing \eqref{eq:Decomposition_Distance} to get for $p=d$
\begin{align}
\abs{\inner{(\partial_wf)u}{v}} &\lesssim \Big(\sum_{\substack{\tau^{-1}\le \lambda \lesssim \nu^\frac{1}{2} \\ \tau^{-1}\le \mu\le \nu\lesssim 1}}\lambda \mu^\frac{1}{d}\nu^{-\frac{1}{2}}\norm{P_\lambda f}_d \Big)\norm{u}_{\Xb{\frac{1}{2}}}\norm{v}_{\Xb{\frac{1}{2}}}+\cdots \notag\\
&\lesssim \Big(\sum_{\tau^{-1}\le \lambda \lesssim \nu^\frac{1}{2}}\lambda \nu^{\frac{1}{d}-\frac{1}{2}}\norm{P_\lambda f}_d \Big)\norm{u}_{\Xb{\frac{1}{2}}}\norm{v}_{\Xb{\frac{1}{2}}} \notag\\
&\lesssim\Big(\sum_{\tau^{-1}\le \lambda \lesssim 1}\lambda^\frac{2}{d}\norm{P_\lambda f}_d \Big)\norm{u}_{\Xb{\frac{1}{2}}}\norm{v}_{\Xb{\frac{1}{2}}} \notag\\
&\lesssim\Big(\sum_{\tau^{-1}\le \lambda \lesssim 1}\norm{P_\lambda f}_d^d \Big)^\frac{1}{d}\norm{u}_{\Xb{\frac{1}{2}}}\norm{v}_{\Xb{\frac{1}{2}}} \notag\\
&\lesssim \norm{f}_d\norm{u}_{\Xb{\frac{1}{2}}}\norm{v}_{\Xb{\frac{1}{2}}}+\cdots.\label{eq:low_frequencies_dual}
\end{align}
On the other hand, when the characteristic frequencies are very different, \textit{i.e.} $\mu^\frac{1}{2}\le\nu$, again by \eqref{eq:Total_Bilinear_TS} and \eqref{eq:holder_Single_Term} in the pairing \eqref{eq:Decomposition_Distance}, we get
\begin{equation}\label{eq:differente_characteristics}
\abs{\inner{(\partial_wf)u}{v}} \le \Big(\sum_{\substack{\nu^\frac{1}{2}\lesssim \lambda \lesssim 1 \\ \tau^{-1}\le \mu\le \nu^2\lesssim 1}}\lambda \mu^{\frac{d+2}{2p}-\frac{1}{2}}\nu^{-\frac{1}{2}}\norm{P_{\lambda,\nu} f}_p \Big)\norm{u}_{\Xb{\frac{1}{2}}}\norm{v}_{\Xb{\frac{1}{2}}} +\cdots.
\end{equation}
We are left thus with the case of high frequencies $\lambda\gtrsim \nu^\frac{1}{2}$, and similar characteristic frequencies $\mu \le\nu\le\mu^\frac{1}{2}$.

\subsection{Bilinear Strategy}\label{sec:bilinear_Strategy}

In this section we assume that $\mu\le\nu<\mu^\frac{1}{2}$, so that the bilinear inequality in Theorem~\ref{thm:Bilinear} give us a small improvement over Tomas-Stein inequality. To pass from bilinear to linear inequalities, we follow the strategy in \cite{MR1625056}.

We decompose the supports of $\widehat{u}_\mu$ and $\widehat{v}_\nu$ into caps of radius $\rho_0\ll 1$ and width $\mu$ and $\nu$ respectively; we number these caps and refer to them by their number $k$. If the vectors normal to two caps $k$ and $k'$ make an angle $\gtrsim \rho_0$, then we call them transversal and denote it by $k\sim k'$; otherwise the caps are not transversal, $k\nsim k'$. For transversal caps we use the bilinear theorem for the sphere. Then, we can write the bilinear term as
\begin{equation*}
u_\mu \conj{v}_\nu = \sum_{k,k'}u_{\mu,k}\bar{v}_{\nu,k'} = \sum_{k\sim k'}u_{\mu,k}\bar{v}_{\nu,k'} + \sum_{k\nsim k'}u_{\mu,k}\bar{v}_{\nu,k'}.
\end{equation*}
Since we cannot apply the bilinear theorem to non-transversal caps, we decompose them again into caps of radius $\rho_1= \frac{1}{2}\rho_0$. If the vectors normal to two caps $k$ and $k'$ make an angle $\sim \rho_1$, then we call them transversal and denote it again by $k\sim k'$; otherwise the caps are not transversal, $k\nsim k'$. For transversal caps we use the bilinear theorem for surfaces of elliptic type, whenever $\rho_0$ is sufficiently small.  We continue this process until $\rho \sim \nu^\frac{1}{2}$, and write
\begin{multline}\label{eq:Total_Bilinear}
\inner{(\partial_wf)u}{v}=\sum_{\substack{\nu^\frac{1}{2}\lesssim \lambda \lesssim 1\\ \mu\le\nu<\mu^\frac{1}{2}}}\Big[ \sum_{\substack{\nu^{1/2}<\rho\lesssim 1 \\ k\sim k'}}\inner{(\partial_wP_{\lambda,\nu}f)u_{\mu,k}^\rho}{v_{\nu,k'}^\rho}+ \\
+\sum_{k\nsim k'}\inner{(\partial_wP_{\lambda,\nu}f)u_{\mu,k}^{\rho^*}}{v_{\nu,k'}^{\rho^*}}\Big],
\end{multline}
where the sum over non-transversal terms is at scale $\rho^*\sim\nu^\frac{1}{2}$. 

\begin{figure}[t]
\centering
\includegraphics[scale=0.9]{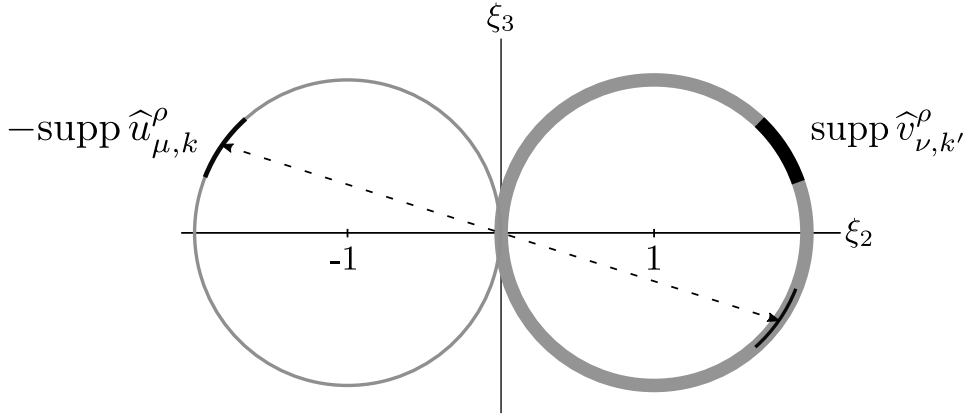}
\caption{The support of two transversal caps is depicted.}\label{fig:supports}
\end{figure}

The support of the inverse Fourier transform of $u_{\mu,k}^\rho \conj{v}_{\nu,k'}^\rho$ has some special properties, and they define when the pairing $\inner{(\partial_wP_{\lambda,\nu}f)u_{\mu,k}^\rho}{v_{\nu,k'}^\rho}$ either vanishes or not. Recall that the support of a convolution $\widecheck{u}_{\mu,k}^\rho*\conj{\widehat{v}}_{\nu,k'}^\rho$ lies in the Minkowski sum of the sets $\supp \widecheck{u}_{\mu,k}^\rho=-\supp \widehat{u}_{\mu,k}^\rho$ and $\supp\widehat{v}_{\nu,k'}^\rho$; see Figure~\ref{fig:supports}. The reader will find easier to evaluate the Minkowski sum of $\supp \widecheck{u}_{\mu,k}^\rho+e_2$ and $\supp\widehat{v}_{\nu,k'}^\rho-e_2$.

We define the support of the caps as 
\begin{equation}
\begin{split}
U_{\mu,k}^\rho &:=\supp \widehat{u}_{\mu,k}^\rho \\
V_{\nu,k}^\rho &:=\supp\widehat{v}_{\nu,k'}^\rho,
\end{split}
\end{equation}
These sets lie in the $\mu$- and $\nu$-neighborhood of a $(d-2)$-sphere. When the caps have radius $\rho_0\ll 1$ and are transversal, then we have that
\begin{equation*}
-U_{\mu,k}^{\rho_0}+V_{\nu,k'}^{\rho_0}\subset \{(\xi_1,\tilde{\xi})\mid\frac{\rho_0}{2}\le \abs{\tilde{\xi}}\le 2-\frac{\rho_0^2}{2},\enspace \abs{\xi_1}\le 2\nu \};
\end{equation*}
Hence, all the terms $\inner{(\partial_w P_{\lambda,\nu}f)u_{\mu,k}^{\rho_0}}{v_{\nu,k'}^{\rho_0}}$ vanish for $\lambda\le c\rho_0$.

When the caps have radius $\rho<\rho_0$, we have to distinguish between neighboring and antipodal caps. Neighboring caps lie in the same ball of radius $2\rho_0$, and antipodal caps lies in different balls of radius $2\rho_0$. We refer to neighboring and antipodal, transversal caps as $k\sim_n k'$ and $k\sim_a k'$ respectively.

If two caps of radius $\nu^\frac{1}{2}\le\rho<\rho_0\ll 1$ are neighboring and transversal, then for the Minkwoski sum we get
\begin{equation*}
-U_{\mu,k}^\rho+V_{\nu,k'}^\rho\subset \{(\xi_1,\tilde{\xi})\mid\abs{\tilde{\xi}}\sim \rho,\enspace \abs{\xi_1}\le 2\nu\}.
\end{equation*}
Hence, only the terms $\inner{(\partial_wP_{\lambda,\nu}f)u_{\mu,k}^\rho}{v_{\nu,k'}^\rho}$ with $\lambda\sim\rho$ survive. When the caps are non-transversal, the Minkowski sum lies in $\{\abs{\tilde{\xi}}\le c\nu^\frac{1}{2}\}$, but we already considered the low frequency terms $\lambda\lesssim \nu^\frac{1}{2}$ in the previous section, so $\inner{(\partial_wP_{\lambda,\nu}f)u_{\mu,k}^{\rho^*}}{v_{\nu,k'}^{\rho^*}}$ always vanishes.

\begin{figure}[t]
\centering
\includegraphics[scale=0.9]{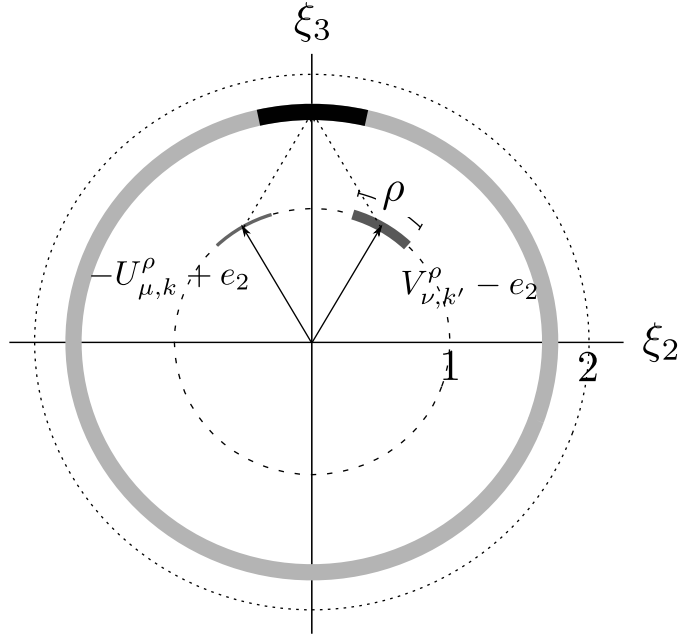}
\caption{The Minkowski sum of two antipodal caps at scale $\rho$.}\label{fig:antipodal}
\end{figure}

If two caps of radius $\nu^\frac{1}{2}\le\rho<\rho_0\ll 1$ are antipodal and transversal, then for the Minkwoski sum we get
\begin{equation*}
-U_{\mu,k}^\rho+V_{\nu,k'}^\rho\subset \{(\xi_1,\tilde{\xi})\mid 2-\abs{\tilde{\xi}}\sim \rho^2,\enspace \abs{\xi_1}\le 2\nu\}.
\end{equation*}
But now we need more detailed information about the support; see Figure~\ref{fig:antipodal}. We can see that $-U_{\mu,k}^\rho+V_{\nu,k'}^\rho$ is a cap of radius $\sim\rho$ in a $\rho^2$-neighborhood of the sphere with radius $2-\rho^2$ and with center at zero; we call it $S_\rho$. Moreover, the caps $-U_{\mu,k}^\rho+V_{\nu,k'}^\rho$ are almost disjoint. In fact, let $x$ be a point in $S_\rho$, $c_k$ be the center of $U_{\mu,k}^\rho$ and $c_{k'}$ be the center of $V_{\nu,k'}^\rho$; if $x$ and $-c_k+e_2$ make an angle $\gtrsim \rho$, then the sum $-c_k+c_k'$ necessarily lies away from $x$. For non-transversal caps, $-U_{\mu,k}^{\rho^*}+V_{\nu,k}^{\rho^*}$ lies in a $\nu$-neighborhood of $S^{d-2}$.  The only terms $\inner{(\partial_wP_{\lambda,\nu}f)u_{\mu,k}^\rho}{v_{\nu,k'}^\rho}$ that survive are for $\lambda\sim 1$.


We will follow the same argument as in the previous section to bound the terms $\inner{(\partial_wP_{\lambda,\nu}f)u_{\mu,k}^\rho}{v_{\nu,k'}^\rho}$; however, the bilinear theorem only holds for well separated caps. To remedy this situation, we use parabolic rescaling.

\begin{theorem}\label{thm:parabolic_rescaling}
Let $f_{\mu,k}$  and $g_{\nu,k'}$ be two functions with Fourier transform supported in a $\mu$- and $\nu$-neighborhood of $S^{n-1}$. If the caps $k$ and $k'$ are transversal at scale $\rho$, then for $1\le p'\le \frac{n+1}{n}$ it holds that
\begin{equation}\label{eq:Parabolic_rescaling}
\begin{gathered}
\norm{f_{\mu,k} g_{\nu,k'}}_{p'}\le C_\varepsilon\rho^{-\frac{1}{p}}\mu^{\frac{n}{2p}-\varepsilon}\nu^{\frac{1}{p}-\varepsilon} \norm{f_{\mu,k}}_2\norm{g_{\nu,k'}}_2 \quad\text{for } \rho> \nu\mu^{-\frac{1}{2}},\\
\norm{f_{\mu,k} g_{\nu,k'}}_{p'} \le C\mu^\frac{n+1}{2}\norm{f_{\mu,k}}_2\norm{g_{\nu,k'}}_2 \quad\text{for } \nu^\frac{1}{2}\le\rho\le \nu\mu^{-\frac{1}{2}}.
\end{gathered}
\end{equation}
\end{theorem}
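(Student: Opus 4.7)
The strategy is to reduce to the scale-$1$ bilinear estimate of Theorem~\ref{thm:Bilinear} (for the first regime) or to the direct Tomas--Stein bilinear bound of Theorem~\ref{thm:Bilinear_TS} (for the second) via parabolic rescaling. By rotating $\Rs^n$ we may assume that both caps lie in a common $O(\rho)$-neighborhood of the ``north pole'' of $S^{n-1}$, where the surface is locally the graph of a function $\Phi$ with $\Phi(0)=0$, $\nabla\Phi(0)=0$, $D^2\Phi(0)=I$. If the caps are antipodal rather than neighboring, we first replace $g_{\nu,k'}$ by its complex conjugate $\overline{g_{\nu,k'}}$; this reflects its Fourier support through the origin, places both caps in a common $O(\rho)$-neighborhood, and preserves every $L^2$ and $L^{p'}$ norm in the statement.

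Introduce the parabolic rescaling $T(\eta',\eta_n):=(\rho\eta',\rho^2\eta_n)$ and define $F,G$ by $\widehat{F}(\eta):=\widehat{f_{\mu,k}}(T\eta)$ and $\widehat{G}(\eta):=\widehat{g_{\nu,k'}}(T\eta)$. Setting $\widetilde\Phi(\eta'):=\rho^{-2}\Phi(\rho\eta')$ one checks that $\widetilde\Phi(\eta')=\tfrac12|\eta'|^2+O(\rho^2)$ and $D^2\widetilde\Phi(\eta')=D^2\Phi(\rho\eta')$, so the rescaled surface $\widetilde S$ is of elliptic type with constants controlled by those of the original sphere. The supports of $\widehat F$ and $\widehat G$ lie in $\widetilde\mu$- and $\widetilde\nu$-neighborhoods of $\widetilde S$, where $\widetilde\mu:=\mu/\rho^2$ and $\widetilde\nu:=\nu/\rho^2$, and the rescaled caps have radius $\sim 1$ and normals separated by angle $\sim 1$. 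A direct change of variables gives
\[
\|F\|_2=\rho^{-(n+1)/2}\|f_{\mu,k}\|_2,\qquad \|FG\|_{p'}=\rho^{-2(n+1)+(n+1)/p'}\|f_{\mu,k}g_{\nu,k'}\|_{p'},
\]
with the analogous identity for $\|G\|_2$.

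In the first regime, $\rho>\nu\mu^{-1/2}$ is equivalent to $\widetilde\nu<\widetilde\mu^{1/2}$, so Theorem~\ref{thm:Bilinear} applies to $F,G$ on $\widetilde S$. Substituting the scaling identities and using $(n+1)-(n+1)/p'=(n+1)/p$, the accumulated $\rho$-exponent simplifies to $-1/p+4\delta$; since $\rho\le 1$, the factor $\rho^{4\delta}$ is bounded by $1$ and relabelling $\varepsilon=\delta$ yields the first claimed inequality. In the second regime, $\rho\le\nu\mu^{-1/2}$ forces $\widetilde\nu\ge\widetilde\mu^{1/2}$, so the bilinear theorem provides no improvement on Tomas--Stein; here we bypass the rescaling and apply Theorem~\ref{thm:Bilinear_TS} directly to $f_{\mu,k}$ and $g_{\nu,k'}$ (no transversality hypothesis is needed). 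The main subtlety I expect is verifying that the reduction to the ``neighboring'' case via complex conjugation is legitimate for antipodal caps and that the elliptic-type constants of $\widetilde S$ are indeed uniform in $\rho$ and in the cap location; once this is done, the remainder of the argument is the routine bookkeeping of $\rho$-powers and the absorption of $\delta$-losses recorded above.
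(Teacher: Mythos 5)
Your proposal is correct and follows essentially the same route as the paper's proof: parabolic rescaling $(\xi',\xi_n)\mapsto(\rho\xi',\rho^2\xi_n)$ to normalize the caps to unit separation, application of Theorem~\ref{thm:Bilinear} (for surfaces of elliptic type, with uniform semi-norm control coming from $\varphi_\rho(\xi')=\rho^{-2}\varphi(\rho\xi')$) in the regime $\rho>\nu\mu^{-1/2}$, and Tomas--Stein in the complementary regime, followed by the same $\rho$-power bookkeeping via $\|F\|_2=\rho^{-(n+1)/2}\|f_{\mu,k}\|_2$ and $\|f_{\mu,k}g_{\nu,k'}\|_{p'}=\rho^{2(n+1)-(n+1)/p'}\|FG\|_{p'}$. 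Two minor observations: (i) your explicit reduction of the antipodal case by replacing $g_{\nu,k'}$ with $\overline{g_{\nu,k'}}$ is a genuine clarification that the paper leaves implicit in its ``without loss of generality'' step, since its graph parametrization only covers a neighborhood of one pole; (ii) your derivation yields $\mu^{\frac{n+1}{2p}}$ in the second regime, which matches the restated bound for $K^\rho_{\mu,\nu}$ later in the section and exposes a typo in the theorem's displayed statement, where the exponent is printed as $\frac{n+1}{2}$. Applying Tomas--Stein before or after rescaling gives the same result (the $\rho$-factors cancel since $\frac{1}{p}+\frac{1}{p'}=1$), so your choice to bypass the rescaling there is equivalent to the paper's.
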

\begin{proof}
Without loss of generality, we assume that both caps lie in the hypersurface given by the graph of
\begin{equation*}
\varphi(\xi')=1-\sqrt{1-|\xi'|^2}=\frac{1}{2}|\xi'|^2+O(|\xi'|^4),
\end{equation*}
where $\xi = (\xi',\xi_n)\in\Rs^n$; we assume also that the center of the caps are symmetrically placed in the axis $\xi_1$. Since the caps are at distance $\sim \rho$ of each other, after applying the scaling $\xi\mapsto (\rho^{-1}\bar{\xi}, \rho^{-2}\xi_d)$, the support of the new functions $\widehat{F}(\xi):= \widehat{f}_{\mu,k}(\rho\xi', \rho^2\xi_n)$  and $\widehat{G}_{\nu,k'}:=  \widehat{g}_{\nu,k}(\rho\xi', \rho^2\xi_n)$ lie at distance $\sim 1$ of each other, and the hypersurface transforms accordingly to the graph of
\begin{equation*}
\varphi_\rho(\xi'):=\rho^{-2}\varphi(\rho \xi')=\rho^{-2}-\sqrt{\rho^{-4}-|\rho^{-1}\bar\xi|^2}=\frac{1}{2}|\xi'|^2+O(\rho^2_0|\xi'|^4).
\end{equation*}
If $\rho\le\rho_0$ is sufficiently small, then the semi-norms $\norm{\partial^N\varphi_\rho}_\infty$ are uniformly bounded, and the bilinear theorem holds uniformly. The rescaled functions $F$ and $G$ are
\begin{gather*}
F(x) = \rho^{-n-1}f_{\mu,k}(\rho^{-1}x', \rho^{-2}x_n) \\
G(x) = \rho^{-n-1}g_{\nu,k}(\rho^{-1}x', \rho^{-2}x_n).
\end{gather*}
Since the Fourier transforms of $F$ and $G$ are supported now in sets of width $\rho^{-2}\mu$ and $\rho^{-2}\nu$ respectively, then we should apply the bilinear theorem whenever $\rho^{-2}\nu<(\rho^{-2}\mu)^\frac{1}{2}$, and Tomas-Stein otherwise.

If $\rho> \nu\mu^{-\frac{1}{2}}$, then we apply the bilinear theorem to $F$ and $G$ to find
\begin{align*}
\norm{f_{\mu,k} g_{\nu,k'}}_{p'} &= \rho^{2(n+1)-\frac{n+1}{p'}}\norm{FG}_{p'}\\
&\le C_\varepsilon\rho^{2(n+1)-\frac{n+1}{p'}-\frac{n+2}{p}}\mu^{\frac{n}{2p}-\varepsilon}\nu^{\frac{1}{p}-\varepsilon}\norm{F}_2\norm{G}_2 \\
&= C_\varepsilon\rho^{-\frac{1}{p}}\mu^{\frac{n}{2p}-\varepsilon}\nu^{\frac{1}{p}-\varepsilon}\norm{f_{\mu,k}}_2\norm{g_{\nu,k'}}_2;
\end{align*}
if we use Tomas-Stein instead, we get the result for $\rho\le \nu\mu^{-\frac{1}{2}}$
\end{proof}

If we define the quantity
\begin{equation}\label{eq:K_mu_nu}
K^\rho_{\mu,\nu}(p'):=\sup_{\substack{\norm{f_{\mu,k}}_2=1 \\\norm{g_{\nu,k'}}_2=1}}\norm{f_{\mu,k} g_{\nu,k'}}_{p'},
\end{equation}
where the supremum runs over functions $f_{\mu,k}$ and $g_{\nu,k'}$ with Fourier transform supported in caps at scale $\rho$, then we can restate Theorem~\ref{thm:parabolic_rescaling} as
\begin{equation*}
K_{\mu,\nu}^\rho(p')\le 
\begin{cases}
C_\varepsilon\rho^{-\frac{1}{p}}\mu^{\frac{n}{2p}-\varepsilon}\nu^{\frac{1}{p}-\varepsilon} & \text{for } \rho> \nu\mu^{-\frac{1}{2}}\\
C\mu^\frac{n+1}{2p} & \text{for } \nu^\frac{1}{2}\le \rho\le \nu\mu^{-\frac{1}{2}}
\end{cases}
\end{equation*}

By Lemma~\ref{lemma:axis_x1} and Theorem~\ref{thm:parabolic_rescaling}, for $n=d-1$, we get
\begin{align*}
\sum_{k\sim k'}\norm{u_{\mu,k}^\rho v_{\nu,k'}^\rho}_{p'} &\lesssim \mu^\frac{1}{p}K_{\mu,\nu}^\rho \sum_{k\sim k'}\norm{u_{\mu,k}}_2\norm{v_{\nu,k'}}_2 \\
&\lesssim \mu^\frac{1}{p}K_{\mu,\nu}^\rho\norm{u_\mu}_2\norm{v_\nu}_2\\
&\lesssim \mu^{\frac{1}{p}-\frac{1}{2}}\nu^{-\frac{1}{2}}K_{\mu,\nu}^\rho\norm{u}_{\Xb{\frac{1}{2}}}\norm{v}_{\Xb{\frac{1}{2}}}.
\end{align*}
Now, let us consider only neighboring caps at scale $\rho_0$.  By the decomposition in \eqref{eq:Total_Bilinear} we get
\begin{align}
\abs{\inner{(\partial_wf)u}{v}}&\le\sum_{\substack{\nu^\frac{1}{2}\lesssim \lambda \\ \mu\le\nu<\mu^\frac{1}{2}}}\sum_{\substack{\nu^{1/2}\le\rho\sim\lambda \\ k\sim_n k'}}\abs{\inner{(\partial_wP_{\lambda,\nu}f)u_{\mu,k}^\rho}{v_{\nu,k'}^\rho}}+\cdots \notag\\
&\lesssim \sum_{\substack{\nu^\frac{1}{2}\lesssim \lambda \\ \mu\le\nu<\mu^\frac{1}{2}}}\lambda\mu^{\frac{1}{p}-\frac{1}{2}}\nu^{-\frac{1}{2}} \norm{P_{\lambda,\nu}f}_p\sum_{\nu^{1/2}\le\rho\sim\lambda}K_{\mu,\nu}^\rho\norm{u}_{\Xb{\frac{1}{2}}}\norm{v}_{\Xb{\frac{1}{2}}} \notag\\
&\lesssim_\varepsilon \Big(\sum_{\substack{\nu^\frac{1}{2}\lesssim \lambda\le \nu\mu^{-\frac{1}{2}} \\ \mu\le\nu<\mu^\frac{1}{2}}}\lambda\mu^{\frac{d+2}{2p}-\frac{1}{2}}\nu^{-\frac{1}{2}} \norm{P_{\lambda,\nu}f}_p + \notag\\
&\quad\sum_{\substack{\nu\mu^{-\frac{1}{2}}\le \lambda\lesssim 1 \\ \mu\le\nu<\mu^\frac{1}{2}}}\lambda^{1-\frac{1}{p}}\mu^{\frac{d+1}{2p}-\frac{1}{2}}\nu^{\frac{1}{p}-\frac{1}{2}-\varepsilon}\norm{P_{\lambda,\nu}f}_p\Big)\norm{u}_{\Xb{\frac{1}{2}}}\norm{v}_{\Xb{\frac{1}{2}}}. \label{eq:neighboring_caps_estimate}
\end{align}
The operator $P_{\lambda,\nu}$ is the projection to the frequencies $|\xi|\sim \lambda$ and $|\xi_1|\lesssim \nu$. 

When the caps $k$ and $k'$ are antipodal, we have to refine the projection $P_{\lambda,\nu}$, so we project also to the support of $\widecheck{u}_{\mu,k}^\rho*\conj{\widehat{v}}_{\nu,k'}^\rho$ and denote this projection as $P_{\lambda,\nu, k,k'}$. We argue as above to get
\begin{align}
\abs{\inner{(\partial_wf)u}{v}}&\le\sum_{\substack{\lambda\sim 1 \\ \mu\le\nu<\mu^\frac{1}{2}}}\Big(\sum_{\substack{\nu^{1/2}<\rho \\ k\sim_a k'}}\abs{\inner{(\partial_w P_{\lambda,\nu}f)u_{\mu,k}^\rho}{v_{\nu,k'}^\rho}}+\abs{\inner{(\partial_w P_{\lambda,\nu}f)u_{\mu,k}^{\rho^*}}{v_{\nu,k'}^{\rho^*}}}\Big) \notag\\
&\lesssim \sum_{\substack{\lambda\sim 1 \\ \mu\le\nu<\mu^\frac{1}{2}}}\lambda\mu^{\frac{1}{p}-\frac{1}{2}}\nu^{-\frac{1}{2}} \sum_{\nu^{1/2}\le\rho}K_{\mu,\nu}^\rho\sup_{k,k'}\norm{P_{\lambda,\nu,k,k'}f}_p\norm{u}_{\Xb{\frac{1}{2}}}\norm{v}_{\Xb{\frac{1}{2}}}+\cdots. \label{eq:antipodal_caps_estimate}
\end{align}

We have already bounded all the contributions, and we can say that for some functional $A'(f)$ we got an upper bound
\begin{equation*}
\abs{\inner{(\partial_wf)u}{v}}\le (\norm{f}_d+A'(f))\norm{u}_{\Xb{\frac{1}{2}}}\norm{v}_{\Xb{\frac{1}{2}}}
\end{equation*}
If we return to the original variables, and replace $u$ and $v$ by $u_{\tau U}$ and $v_{\tau U}$, and $w$ by $Ue_j$, then by \eqref{eq:Dilation_rotation_u}, \eqref{eq:norm_Change_Variables} and \eqref{eq:pairing_Change_Variables} we get
\begin{align*}
\inner{(\partial_j f)u}{v} &= \tau^{2d+1}\inner{(\partial_w f_{\tau U})u_{\tau U}}{v_{\tau U}} \\
&\lesssim \tau^{2d+1}(\norm{f_{\tau U}}_d+A'(f_{\tau U}))\norm{u_{\tau U}}_{\Xb{1/2}}\norm{v_{\tau U}}_{\Xb{1/2}} \\
&= (\norm{f}_d+\tau^{d-1}A'(f_{\tau U}))\norm{u}_{\dot{X}^{1/2}_{\zeta(\tau, U)}}\norm{v}_{\dot{X}^{1/2}_{\zeta(\tau, U)}}.
\end{align*}
If $m_{\lambda,\nu, k,k'}$ is the multiplier of $P_{\lambda,\nu, k,k'}$, then
\begin{align*}
(P_{\lambda,\nu, k,k'}f_{\tau U})(x) &= (m_{\lambda,\nu, k,k'}(\cdot)\widehat{f}(\tau U\cdot))\,\widecheck{}(x) \\
&= \tau^{-d}(P^{U}_{\tau\lambda,\tau\nu, k,k'}f)(\tau^{-1} U x),
\end{align*}
where the multiplier of $P^U$ is $m(U^{-1}\xi)$. Hence, 
\begin{equation*}
\norm{P_{\lambda,\nu, k,k'}f_{\tau U}}_p = \tau^{-\frac{d}{p'}}\norm{P_{\tau\lambda,\tau\nu, k,k'}^Uf}_p
\end{equation*}
We collect all the estimates \eqref{eq:Q_hQ_h}, \eqref{eq:Q_lQ_h}, \eqref{eq:low_frequencies_dual}, \eqref{eq:differente_characteristics}, \eqref{eq:neighboring_caps_estimate} and \eqref{eq:antipodal_caps_estimate} to conclude this section with the following theorem.
\begin{theorem}\label{thm:Grand_Bound_multiplication}
For $d\le p\le \infty$, the norm of the operator $M_{\partial_jf}:u\in X^\frac{1}{2}_{\zeta(U,\tau)}\mapsto (\partial_jf)u \in X^{-\frac{1}{2}}_{\zeta(U,\tau)}$ has the upper bound
\begin{equation}\label{eq:Multiplication_Norm_Bound}
\norm{M_{\partial_jf}}_{X^{1/2}_{\zeta(\tau, U)}\mapsto X^{-1/2}_{\zeta(\tau, U)}}\lesssim_\varepsilon \norm{f}_d+\tau^{\frac{d}{p}-1}A(\tau, U, f),
\end{equation}
where
\begin{multline}
A(\tau, U, f) := \sum_{\substack{\nu^\frac{1}{2}\lesssim \lambda\lesssim 1 \\ \tau^{-1}\le\mu\le \nu}}Q(\lambda,\mu,\nu)\norm{P_{\tau\lambda,\tau\nu}^Uf}_p + \\
+\sum_{\substack{\lambda\sim 1 \\ \mu\le\nu<\mu^\frac{1}{2}}}\lambda\mu^{\frac{1}{p}-\frac{1}{2}}\nu^{-\frac{1}{2}} \sum_{\nu^{1/2}\le\rho}K_{\mu,\nu}^\rho\sup_{k,k'}\norm{P_{\tau\lambda,\tau\nu,k,k'}^Uf}_p.
\end{multline}
The constant $K_{\mu,\nu}^\rho$ is defined in \eqref{eq:K_mu_nu}, and
\begin{equation*}
Q(\lambda,\mu,\nu) := 
\begin{cases}
\lambda^{1-\frac{1}{p}}\mu^{\frac{d+1}{2p}-\frac{1}{2}}\nu^{\frac{1}{p}-\frac{1}{2}-\varepsilon} &\text{for }\lambda>\nu\mu^{-\frac{1}{2}} \text{ and } \nu\le\mu^\frac{1}{2} \\
\lambda\mu^{\frac{d+2}{2p}-\frac{1}{2}}\nu^{-\frac{1}{2}} &\text{otherwise}. 
\end{cases}
\end{equation*}
\end{theorem}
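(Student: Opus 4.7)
The plan is to assemble the six estimates already established in the section, with careful tracking of how the rescaling $u \mapsto u_{\tau U}$ interacts with the $L^p$ norms of frequency projections of $f$. By duality, it suffices to bound the pairing $\inner{(\partial_j f) u}{v}$ uniformly over unit vectors in $\dot{X}^{1/2}_{\zeta(U,\tau)}$; since $f$ is compactly supported one can work with the inhomogeneous spaces $X^{1/2}_{\zeta(U,\tau)}$. The first move is to split $u = Q_l u + Q_h u$ and similarly for $v$ according to whether the frequencies lie within $\tau/10$ of the characteristic sphere $\Sigma_\zeta$. The three terms involving at least one non-characteristic factor are dispatched directly by \eqref{eq:Q_hQ_h} and \eqref{eq:Q_lQ_h}, contributing only a $\norm{f}_d$ term on the right of \eqref{eq:Multiplication_Norm_Bound}.

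Next, the remaining term $\inner{(\partial_j f)\, Q_l u}{Q_l v}$ is pushed through the change of variables \eqref{eq:Dilation_rotation_u}--\eqref{eq:pairing_Change_Variables}, reducing to the unit sphere $S^{d-2} \subset e_1^\perp$ centered at $e_2$. The rescaled pairing is then dyadically decomposed as in \eqref{eq:Decomposition_Distance} in the scales $\lambda$ (frequency of $f$) and $\mu \le \nu$ (distance of the factors to $\Sigma$), and sorted into three regimes: low frequencies $\lambda \lesssim \nu^{1/2}$, handled by the Tomas--Stein bilinear computation \eqref{eq:low_frequencies_dual}, which yields a $\norm{f_{\tau U}}_d$ that absorbs into the $\norm{f}_d$ term after undoing the scaling; disparate characteristic scales $\nu \ge \mu^{1/2}$ with $\lambda \gtrsim \nu^{1/2}$, producing the ``otherwise'' branch of $Q(\lambda,\mu,\nu)$ via \eqref{eq:differente_characteristics}; and the balanced bilinear regime $\mu \le \nu < \mu^{1/2}$, treated separately for neighboring caps \eqref{eq:neighboring_caps_estimate} and antipodal caps \eqref{eq:antipodal_caps_estimate}, each combining Theorem~\ref{thm:parabolic_rescaling} with Lemma~\ref{lemma:axis_x1}.

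The final step is to undo the rescaling and express the bound in terms of the original $f$. By \eqref{eq:norm_Change_Variables} each $X^{1/2}$ norm gains $\tau^{d/2+1}$, by \eqref{eq:pairing_Change_Variables} the integral carries a factor $\tau^{2d+1}$, and the identity $\norm{P_{\lambda,\nu,k,k'}f_{\tau U}}_p = \tau^{-d/p'}\norm{P^U_{\tau\lambda,\tau\nu,k,k'}f}_p$ converts the $L^p$ norms of rescaled projections into projections of $f$ itself at scales $\tau\lambda, \tau\nu$. Multiplying the three contributions yields exactly $\tau^{2d+1}\cdot \tau^{-d-2}\cdot \tau^{-d/p'} = \tau^{d/p - 1}$, matching the stated prefactor of $A(\tau,U,f)$.

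The main obstacle is not any single estimate but the bookkeeping: tracking which regime produces which branch of $Q(\lambda,\mu,\nu)$, and, in the antipodal case, using the Minkowski-sum analysis of Figure~\ref{fig:antipodal} to justify replacing $\norm{P_{\lambda,\nu}f}_p$ by $\sup_{k,k'}\norm{P_{\lambda,\nu,k,k'}f}_p$ without reintroducing a costly sum over cap pairs. One must also verify at each stage that the Cauchy--Schwarz step $\sum_{k\sim k'}\norm{u_{\mu,k}}_2\norm{v_{\nu,k'}}_2 \lesssim \norm{u_\mu}_2\norm{v_\nu}_2$ loses only an overlap constant, which is where the choice of transversality thresholds $\rho_j = 2^{-j}\rho_0$ pays off.
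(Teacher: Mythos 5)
Your proposal is correct and matches the paper's argument step for step: the $Q_l/Q_h$ splitting, the rescaling to the normalized sphere, the $\lambda,\mu,\nu$ dyadic sort into the three regimes, the cap decompositions for neighboring and antipodal pairs, and the exponent bookkeeping $\tau^{2d+1}\cdot\tau^{-d-2}\cdot\tau^{-d/p'}=\tau^{d/p-1}$. This is essentially a faithful recapitulation of the paper's collection of estimates \eqref{eq:Q_hQ_h}, \eqref{eq:Q_lQ_h}, \eqref{eq:low_frequencies_dual}, \eqref{eq:differente_characteristics}, \eqref{eq:neighboring_caps_estimate}, \eqref{eq:antipodal_caps_estimate} followed by the change of variables.
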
 

\subsection{End of the Proof}

In this section we average the norm $\norm{M_{\partial_jf}}_{\dot{X}^{1/2}_{\zeta(\tau, U)}\mapsto \dot{X}^{-1/2}_{\zeta(\tau, U)}}$ over $\tau$ and $U$. We follow the method of Haberman \cite{MR3397029} and of Ham, Kwon and Lee \cite{HKL}.

By Theorem~\ref{thm:Grand_Bound_multiplication} we have
\begin{multline}
\dashint\limits_M\int_{O_d} \norm{m_{\partial_i f}}_{X^{1/2}_{\zeta(\tau, U)}\mapsto X^{-1/2}_{\zeta(\tau, U)}}\,dUd\tau \lesssim_\varepsilon \norm{f}_d + \\ 
+M^{\frac{d}{p}-1}\sum_{\substack{\nu^\frac{1}{2}\lesssim \lambda\lesssim 1 \\ M^{-1}\le\mu\le \nu}}Q(\lambda,\mu,\nu)\dashint\limits_M\int_{O_d}\norm{P_{\tau\lambda,\tau\nu}^U f}_p\,dUd\tau + \\
+M^{\frac{d}{p}-1}\sum_{\substack{\lambda\sim 1 \\ \mu\le\nu<\mu^\frac{1}{2}}}\lambda\mu^{\frac{1}{p}-\frac{1}{2}}\nu^{-\frac{1}{2}} \sum_{\nu^{1/2}\le\rho}K_{\mu,\nu}^\rho\dashint\limits_M\int_{O_d}\sup_{k,k'}\norm{P_{\tau\lambda,\tau\nu,k,k'}^U f}_p \,dUd\tau. \label{eq:average_multiplication}
\end{multline}
The first average at the right has been already bounded by Haberman.
\begin{lemma}(Haberman, Lemma 5.1 in \cite{MR3397029})\label{lemma:Average_Haberman}
Let $P_{\tau\lambda,\tau\nu}^U$ be the projection to frequencies $\abs{\xi}\sim\tau\lambda$ and to frequencies $\abs{\inner{Ue_1}{\xi}}\le 2\tau\nu$. If $f\in L^p(\Rs^d)$, then
\begin{equation}
\Big(\int_{O_d}\norm{P_{\tau\lambda,\tau\nu}^U f}_p^p\,dU\Big)^\frac{1}{p} \le C\Big(\frac{\nu}{\lambda}\Big)^\frac{1}{p}\norm{f}_p \qquad\text{for } 2\le p \le\infty.
\end{equation}
\end{lemma}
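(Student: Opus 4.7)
The plan is to realize this as a linear operator bound and apply Riesz--Thorin interpolation. Define $T\colon L^p(\Rs^d)\to L^p(O_d\times\Rs^d)$ by $(Tf)(U,x):=(P^U_{\tau\lambda,\tau\nu}f)(x)$, using Haar measure on $O_d$ and Lebesgue measure on $\Rs^d$. The claimed inequality is exactly $\norm{Tf}_p\lesssim (\nu/\lambda)^{1/p}\norm{f}_p$, so I will prove it at the endpoints $p=2$ and $p=\infty$ and interpolate with $\theta=2/p$. Throughout I realize the multiplier as a product $m_U(\xi):=\phi(\abs{\xi}/(\tau\lambda))\psi(\inner{Ue_1}{\xi}/(\tau\nu))$ for smooth bumps $\phi,\psi$ adapted to the two cutoffs.

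For the $L^2$ endpoint I apply Plancherel and Fubini:
\begin{equation*}
\int_{O_d}\norm{T_U f}_2^2\,dU=\int_{\Rs^d}\abs{\widehat{f}(\xi)}^2\phi(\abs{\xi}/(\tau\lambda))^2\int_{O_d}\psi(\inner{Ue_1}{\xi}/(\tau\nu))^2\,dU\,d\xi.
\end{equation*}
Since $Ue_1$ is uniformly distributed on $S^{d-1}$ under Haar measure, for fixed $\xi$ with $\abs{\xi}\sim\tau\lambda$ the inner integral reduces to the surface measure of the spherical slab $\{\omega\in S^{d-1}:\abs{\inner{\omega}{\xi/\abs{\xi}}}\lesssim\nu/\lambda\}$, which is $\lesssim\nu/\lambda$ whenever $\nu\le\lambda$ (otherwise it is $\lesssim 1$, which trivially matches the claim). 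This gives $\norm{T}_{L^2\to L^2}\lesssim(\nu/\lambda)^{1/2}$.

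For the $L^\infty$ endpoint I show that the convolution kernel $\check m_U$ is uniformly in $L^1(\Rs^d)$. The product structure of the symbol yields $\check m_U=K^{\mathrm{ann}}_{\tau\lambda}\ast K^{\mathrm{slab}}_{\tau\nu,U}$, where $K^{\mathrm{ann}}$ is a standard Littlewood--Paley kernel with $\norm{K^{\mathrm{ann}}}_{L^1}\lesssim 1$, and $K^{\mathrm{slab}}$ is the tensor product of a one-dimensional bump of total variation $\lesssim 1$ with a Dirac mass in the $(d-1)$ directions orthogonal to $Ue_1$. The convolution of an $L^1$ function with a finite Borel measure is in $L^1$ with norm bounded by the product, so $\norm{\check m_U}_{L^1(\Rs^d)}\lesssim 1$ uniformly in $(U,\tau,\lambda,\nu)$. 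Young's inequality then yields $\norm{T_U f}_\infty\lesssim\norm{f}_\infty$ uniformly, hence $\norm{T}_{L^\infty\to L^\infty}\lesssim 1$. This is the step I expect to be the main obstacle: the slab multiplier alone is not $L^\infty$-bounded in $d$ dimensions because its kernel is a singular measure, so one must crucially exploit the accompanying $L^1$ annular kernel.

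Finally, Riesz--Thorin applied to the linear operator $T$ with $1/p=\theta/2$ produces
\begin{equation*}
\norm{Tf}_{L^p(O_d\times\Rs^d)}\lesssim(\nu/\lambda)^{\theta/2}\norm{f}_p=(\nu/\lambda)^{1/p}\norm{f}_p,
\end{equation*}
which is the desired inequality.
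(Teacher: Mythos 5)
Your proof is correct and follows the same strategy that the paper itself sketches for the companion Lemma~\ref{lemma:Average_HKL} (and that Haberman uses): realize the statement as a vector-valued $L^p(\Rs^d)\to L^p(O_d\times\Rs^d)$ bound, obtain the $p=2$ endpoint via Plancherel--Fubini together with the fact that the Haar-push-forward of the slab condition is a spherical band of measure $\sim\nu/\lambda$, obtain the $p=\infty$ endpoint from a uniform $L^1$ (or finite-measure) kernel bound, and interpolate. The paper does not re-prove this particular lemma but cites Haberman; your argument matches the cited method.

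One tangential claim you make is wrong, though it does not affect the validity of the proof: the smooth slab multiplier \emph{alone} is in fact bounded on $L^\infty(\Rs^d)$. Its convolution kernel $(\tau\nu)\check{\psi}((\tau\nu)x_1)\otimes\delta_{x'=0}$ is a \emph{finite} singular Borel measure, and convolution against any finite Borel measure is bounded on $L^\infty$ with norm equal to the total variation, here $\norm{\check\psi}_{L^1(\Rs)}$. Singularity of the kernel measure is not the obstruction you suggest; the annular factor is not needed for the $L^\infty$ endpoint, only for the $L^2$ gain $(\nu/\lambda)^{1/2}$ (where it pins $\abs{\xi}$ to $\sim\tau\lambda$ so the slab becomes a band of the right angular width).
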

The second average at the right of \eqref{eq:average_multiplication} has been already bounded by Ham, Kwon and Lee.
\begin{lemma}(Ham, Kwon and Lee, Lemma 4.3 in \cite{HKL})\label{lemma:Average_HKL}
Let $k$ and $k'$ denote all transversal, antipodal caps at scale $\rho$, or all non-transversal, antipodal caps at scale $\sim\nu^\frac{1}{2}$. If $P_{\tau\lambda,\tau\nu,k,k'}^U$ is the projection to frequencies $\abs{\xi}\sim\tau\lambda$, $\abs{\inner{Ue_1}{\xi}}\le 2\tau\nu$ and $\xi\in \supp \widecheck{u}_{\mu,k}^\rho*\conj{\widehat{v}}_{\nu,k'}^\rho$, then 
\begin{equation}\label{eq:Multiplication_Antipodal}
\Big(\dashint_M\int_{O_d}\sup_{k,k'}\norm{P_{\tau\lambda,\tau\nu,k,k'}^U f}_p^p\,dUd\tau\Big)^\frac{1}{p} \le C\Big(\frac{\nu}{\lambda}\Big)^\frac{1}{p}\rho^{\frac{2}{p}}\norm{f}_p \qquad\text{for } 2\le p \le\infty.
\end{equation}
\end{lemma}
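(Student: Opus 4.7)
The plan is to prove Lemma~\ref{lemma:Average_HKL} by Riesz--Thorin interpolation of the vector-valued operator
\[
T: f\longmapsto \bigl(P^{U}_{\tau\lambda,\tau\nu,k,k'} f\bigr)_{U,\tau,k,k'},
\]
regarded as a map $L^{p}(\Rs^{d})\to L^{p}(O_{d}\times [M,2M]\times\{(k,k')\}\times\Rs^{d})$. Since all terms are non-negative, $\sup_{k,k'}|a_{k,k'}|^{p}\le \sum_{k,k'}|a_{k,k'}|^{p}$, so replacing the supremum in \eqref{eq:Multiplication_Antipodal} by the $\ell^{p}$ sum only strengthens the statement. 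The $p=\infty$ endpoint is immediate: each $P^{U}_{\tau\lambda,\tau\nu,k,k'}$ is a Fourier multiplier with an $L^{1}$ kernel uniformly in the parameters, so $\norm{Tf}_{L^{\infty}}\le C\norm{f}_{\infty}$.

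The heart of the argument is the $L^{2}$ endpoint. By Plancherel and Fubini,
\[
\dashint_{M}\int_{O_{d}}\sum_{k,k'}\norm{P^{U}_{\tau\lambda,\tau\nu,k,k'}f}_{2}^{2}\,dU\,d\tau = \int_{\Rs^{d}}\abs{\widehat{f}(\xi)}^{2}\,\mathcal{M}(\xi)\,d\xi,
\]
where $\mathcal{M}(\xi):=\dashint_{M}\int_{O_{d}}\sum_{k,k'}\ind_{E^{U}_{\tau,k,k'}}(\xi)\,dU\,d\tau$ and $E^{U}_{\tau,k,k'}$ is the frequency support of $P^{U}_{\tau\lambda,\tau\nu,k,k'}$. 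By the almost disjointness noted after Figure~\ref{fig:antipodal}, the inner sum is bounded by the indicator of the union $E^{U}_{\tau}$. In the antipodal, transversal case this union lies in the annular slab
\[
\bigl\{\,\xi\in\Rs^{d}\,:\,\abs{\tilde\xi}\in[2\tau-c\tau\rho^{2},\,2\tau+c\tau\rho^{2}],\ \abs{\inner{Ue_{1}}{\xi}}\le 2\tau\nu\,\bigr\},
\]
and the non-transversal, scale-$\nu^{1/2}$ case reduces to the same form since $\rho^{2}\sim \nu$. For each fixed $\xi$, the radial constraint pins down $\tau=\abs{\tilde\xi}/2+O(\abs{\tilde\xi}\rho^{2})$, producing a window of relative length $\lesssim \rho^{2}$ in $[M,2M]$; the slab constraint forces $Ue_{1}$ into an $O(\nu/\lambda)$-neighbourhood of the hyperplane $\xi^{\perp}$, whose Haar measure in $O_{d}$ is $\lesssim \nu/\lambda$. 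Hence $\mathcal{M}(\xi)\lesssim (\nu/\lambda)\rho^{2}$ and $\norm{T}_{L^{2}\to L^{2}}\lesssim ((\nu/\lambda)\rho^{2})^{1/2}$.

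Interpolating between the two endpoint bounds yields
\[
\Bigl(\dashint_{M}\int_{O_{d}}\sum_{k,k'}\norm{P^{U}_{\tau\lambda,\tau\nu,k,k'}f}_{p}^{p}\,dU\,d\tau\Bigr)^{\!1/p}\lesssim \Bigl(\frac{\nu}{\lambda}\Bigr)^{1/p}\rho^{2/p}\norm{f}_{p}
\]
for $2\le p\le\infty$, and the pointwise inequality $\sup_{k,k'}(\cdot)\le\sum_{k,k'}(\cdot)$ then gives \eqref{eq:Multiplication_Antipodal}. The main obstacle is the $L^{2}$ step, where one must isolate two independent small factors in the $(\tau,U)$ parameter space: the radial thickness $\rho^{2}$ of the annulus---which is the gain coming precisely from the antipodal geometry and is absent in Haberman's Lemma~\ref{lemma:Average_Haberman}---and the angular thickness $\nu/\lambda$ from the $\xi_{1}$-slab. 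Once these factors are located and the near-disjointness of the caps $E^{U}_{\tau,k,k'}$ is applied, the rest of the argument is a two-parameter randomization along the same lines as Haberman's lemma.
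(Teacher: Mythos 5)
Your argument is correct and follows the same route as the paper's own sketch: interpolate between a trivial $L^\infty\to L^\infty$ multiplier bound and an $L^2$ estimate obtained by Plancherel, bounding $\sum_{k,k'}|m^U_{\tau\lambda,\tau\nu,k,k'}|^2\lesssim 1$ by almost-disjointness of the antipodal caps and then integrating the resulting indicator over $(\tau,U)$ to extract the radial factor $\rho^2$ and the angular factor $\nu/\lambda$. The only cosmetic differences are that you phrase the interpolation as vector-valued Riesz--Thorin and note $\sup_{k,k'}\le\sum_{k,k'}$ explicitly, while the paper passes from Haar measure on $O_d$ to $S^{d-1}$; both lead to the identical bound $\mathcal{M}(\xi)\lesssim(\nu/\lambda)\rho^2$.
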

\begin{proof}[Sketch of the proof]
The proof is by interpolation. For the point $p=\infty$ we get 
\begin{equation*}
\sup_{k,k', U,\tau}\norm{P^U_{\tau\lambda,\tau\nu, k,k'}f}_\infty\lesssim \norm{f}_\infty.
\end{equation*}
For $p=2$ we get
\begin{multline*}
\dashint_M\int_{O_d}\sum_{k,k'}\norm{m^U_{\tau\lambda,\tau\nu, k,k'}\hat{f}}_2^2\,dUd\tau = \\
\int |\hat{f}(\xi)|^2\dashint_M\int_{S^{d-1}}\sum_{k,k'}|m_{\tau\lambda,\tau\nu, k,k'}|^2(|\xi|\omega)\,d\omega d\tau d\xi.
\end{multline*}
The function $\sum_{k,k'}|m_{\tau\lambda,\tau\nu, k,k'}|^2$ is supported in the intersection of an annulus of radius $\tau(2-\rho^2)$ and width $\sim \tau\rho^2$, and a hyperplane of width $\tau\nu$ normal to $Ue_1$. Furthermore, since the sets $-U_{\mu,k}^\rho+V_{\nu,k'}^\rho$ are almost disjoint for different $k$ and $k'$, then $\sum_{k,k'}|m_{\tau\lambda,\tau\nu, k,k'}|^2\lesssim 1$. Hence, for fixed $\xi$ we get
\begin{equation*}
\dashint_M\int_{S^{d-1}}\sum_{k,k'}|m_{\tau\lambda,\tau\nu, k,k'}|^2(|\xi|\omega)\,d\omega d\tau \lesssim \ind_{\{|\xi|\sim M\}}\frac{\nu}{\lambda}\rho^2|\xi|M^{-1},
\end{equation*}
which leads to
\begin{align*}
\dashint_M\int_{O_d}\sum_{k,k'}\norm{m^U_{\tau\lambda,\tau\nu, k,k'}\hat{f}}_2^2\,dUd\tau &\lesssim \frac{\nu}{\lambda}\rho^2\int_{\{|\xi|\sim M\}} |\hat{f}|^2 d\xi \\
& \le \frac{\nu}{\lambda}\rho^2\norm{f}_2^2.
\end{align*}
\end{proof}

We use Lemma~\ref{lemma:Average_Haberman}, Lemma~\ref{lemma:Average_HKL} and H\"older in \eqref{eq:average_multiplication} to get
\begin{align*}
\dashint\int_{O_d} \norm{m_{|D|f}}_{X^{1/2}_{\zeta(\tau, U)}\mapsto X^{-1/2}_{\zeta(\tau, U)}}\,dUd\tau &\lesssim_\varepsilon \norm{f}_d + \\
&\hspace*{-2cm}+M^{\frac{d}{p}-1}\sum_{\substack{\nu^\frac{1}{2}\lesssim \lambda\lesssim 1 \\ M^{-1}\le\mu\le \nu}}Q(\lambda,\mu,\nu)\nu^\frac{1}{p}\lambda^{-\frac{1}{p}}\norm{P_{M\lambda} f}_p + \\
&\hspace{-1.5cm}+M^{\frac{d}{p}-1}\sum_{\substack{\lambda\sim 1 \\ \mu\le\nu<\mu^\frac{1}{2}}}\lambda^{1-\frac{1}{p}}\mu^{\frac{1}{p}-\frac{1}{2}}\nu^{\frac{1}{p}-\frac{1}{2}} \sum_{\nu^{1/2}\le\rho}K_{\mu,\nu}^\rho\rho^{\frac{2}{p}}\norm{P_{M\lambda} f}_p \\
&= \norm{f}_d +\text{I} + \text{II}.
\end{align*}
To bound I, we use the definition of $Q$ in Theorem~\ref{thm:Grand_Bound_multiplication}:
\begin{equation*}
Q(\lambda,\mu,\nu) := 
\begin{cases}
\lambda^{1-\frac{1}{p}}\mu^{\frac{d+1}{2p}-\frac{1}{2}}\nu^{\frac{1}{p}-\frac{1}{2}-\varepsilon} &\text{for }\lambda>\nu\mu^{-\frac{1}{2}} \text{ and } \nu\le\mu^\frac{1}{2} \\
\lambda\mu^{\frac{d+2}{2p}-\frac{1}{2}}\nu^{-\frac{1}{2}} &\text{otherwise}. 
\end{cases}
\end{equation*}
We fix $\lambda\gtrsim M^{-\frac{1}{2}}$, and sum first in $\nu$ and then in $\mu$. Since we assume that $p\ge d\ge 5$, then we get 
\begin{equation*}
\text{I} \le  cM^{\frac{d-5}{2p}+\varepsilon}\sum_{M^{-\frac{1}{2}}\lesssim \lambda\lesssim 1}\lambda^\frac{1}{2}\norm{P_{M\lambda} f}_p\le c\norm{f}_{W^{\frac{d-5}{2p}+\varepsilon,p}}\qquad\text{for } \varepsilon\ll 1.
\end{equation*}
To bound II, recall that:
\begin{equation*}
K_{\mu,\nu}^\rho(p')\le 
\begin{cases}
C_\varepsilon\rho^{-\frac{1}{p}}\mu^{\frac{d-1}{2p}}\nu^{\frac{1}{p}-\varepsilon} & \text{for } \rho> \nu\mu^{-\frac{1}{2}}\\
C\mu^\frac{d}{2p} & \text{for } \nu^\frac{1}{2}\le \rho\le \nu\mu^{-\frac{1}{2}},
\end{cases}
\end{equation*}
We fix $\lambda\sim 1$ and sum in $\rho$, then in $\nu$ and then in $\mu$; the result is
\begin{equation*}
\text{II}\le M^{\frac{d-5}{2p}+\varepsilon}\sum_{\lambda\sim 1}\norm{P_{M\lambda} f}_p\le c\norm{f}_{W^{\frac{d-5}{2p}+\varepsilon,p}}\qquad\text{for } \varepsilon\ll 1.
\end{equation*}
This concludes the proof of Theorem~\ref{thm:Main_Theorem}.

\section{The Bilinear Theorem}\label{sec:Bilinear_Estimates}

In this section we prove the bilinear theorem for two open subsets of the paraboloid. The paraboloid is technically simpler, so the exposition runs more smoothly. After concluding the proof, we explain how we should modify the proof to get Theorem~\ref{thm:Bilinear}. The proof follows closely the ideas presented by Tao in \cite{MR2033842}, and we include here the argument for the sake of completeness. 

\begin{customthm}{\ref{thm:Bilinear}'}
Suppose that $S_1$ and $S_2$ are two open subsets of the paraboloid in $\Rs^n$ with diameter $\lesssim 1$ and at distance $\sim 1$ of each other. If $f_\mu$ and $g_\nu$ are functions with Fourier transforms supported in a $\mu$-neighborhood of $S_1$ and a $\nu$-neighborhood of $S_2$ respectively, for $\mu\le\nu< \mu^\frac{1}{2}<1$, then for every $\varepsilon>0$ it holds that
\begin{equation}\label{eq:Bilinear_Comparable_2}
\norm{f_\mu g_\nu}_{p'} \le C_\varepsilon\mu^{\frac{n}{2p}-\varepsilon}\nu^{\frac{1}{p}-\varepsilon}\norm{f_\mu}_2\norm{g_\nu}_2, \quad\text{for }1\le p'\le \frac{n}{n-1}. 
\end{equation}
The inequalities are best possible, up to $\varepsilon$-losses, in $\mu$ and $\nu$.
\end{customthm}

We can restate the theorem in terms of the quantity
\begin{equation*}
K_{\mu,\nu}(p'):= \sup_{\norm{f_\mu}_2=\norm{g_\nu}_2=1}\norm{f_\mu g_\nu}_{p'}.
\end{equation*}
We get the upper bound of $K_{\mu,\nu}(p')$ by an argument of induction in scales. With some examples, we show that the upper bound $K_{\mu,\nu}(p')$ is the best possible, up to $\varepsilon$-losses.

When $\mu^\frac{1}{2}\le\nu$, the separation between supports does not yield any improvement over Theorem~\ref{thm:Bilinear_TS}, at least in the range $1\le p'\le \frac{n+1}{n}$.

\begin{example}[Case $\mu^\frac{1}{2}\le\nu$]
Let $N_\mu(S_1)$ and $N_\nu(S_2)$ be neighborhoods of two open subsets of the paraboloid with diameter $\sim 1$ and at distance $\sim 1$ of each other. In $N_\mu(S_1)$ let $C_1$ be a cap of radius $\mu^\frac{1}{2}$ and width $\mu$. In $N_\nu(S_2)$ let $C_2:=C_1+a\subset N_\nu(S_2)$ for some vector $a$; this is possible owing to the hypothesis $\mu^\frac{1}{2}\le\nu$. After replacing for $\widehat{u}_\mu=\ind_{C_1}$ and $\widehat{v}_\nu=\ind_{C_2}$ in the bilinear inequality, we get $K_{\mu,\nu}(p')\ge c\mu^\frac{n+1}{2p}$.
\end{example}

Theorem~\ref{thm:Bilinear} holds in $\Rs^2$ without $\varepsilon$-losses. The proof is by averaging over translations of the parabola; see for example Lemma 2.4 in \cite{MR2046812}. 

\begin{example}[Case $\Rs^2$ and $\mu\le\nu\le\mu^\frac{1}{2}$]
Let $N_\mu(S_1)$ and $N_\nu(S_2)$ be separated in the parabola as in Theorem~\ref{thm:Bilinear}'. In $N_\mu(S_1)$ let $C_1$ be a cap of diameter $\nu$ and width $\mu$. In $N_\nu(S_2)$ let $C_2:=C_1+a\subset N_\nu(S_2)$ for some vector $a$. After replacing for $\widehat{u}_\mu=\ind_{C_1}$ and $\widehat{v}_\nu=\ind_{C_2}$ in the bilinear inequality, we get $K_{\mu,\nu}(p')\ge c\mu^\frac{1}{p}\nu^\frac{1}{p}$.
\end{example}

\begin{figure}[t]
\centering
\includegraphics[scale=0.8]{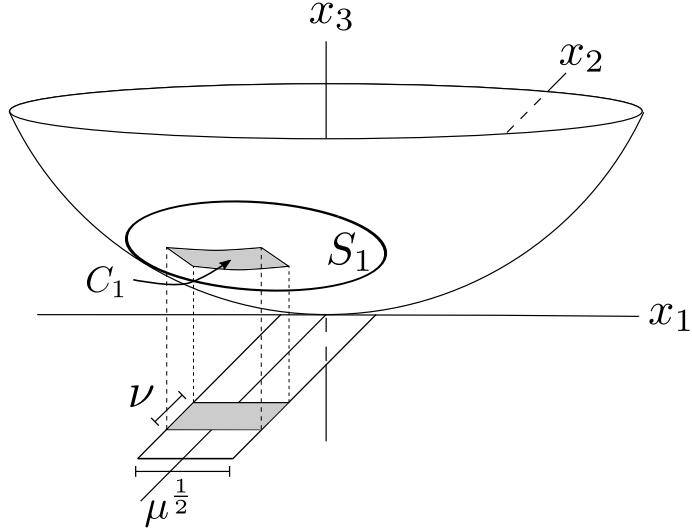}
\caption{The construction of the cap $C_1$.}\label{fig:squashed}
\end{figure}

In higher dimensions we consider as example a modification of the squashed caps in Section 2.7 of \cite{MR1625056}.

\begin{example}[Case $n\ge 3$ and $\mu\le\nu\le\mu^\frac{1}{2}$]
Let $N_\mu(S_1)$ and $N_\nu(S_2)$ be separated in the paraboloid as in Theorem~\ref{thm:Bilinear}'. Let $L_\mu\subset\Rs^{n-1}$ be a $\mu^\frac{1}{2}$-neighborhood of the plane $\{x_1=\cdots=x_{n-2}=0\}$. In $L_\mu$ choose a box $\widetilde{C}_1$ of dimensions $\nu\times\mu^\frac{1}{2}\times\cdots\times\mu^\frac{1}{2}$, so that its lift to the paraboloid lies in $S_1$, and thicken it in $N_\mu(S_1)$ creating so a cap $C_1$ of dimensions $\nu\times\mu^\frac{1}{2}\times\cdots\times\mu^\frac{1}{2}\times\mu$; see Figure~\ref{fig:squashed}. Now, let $C_2:=C_1+a\subset N_\nu(S_2)$ for some vector $a$.  After replacing for $\widehat{u}_\mu=\ind_{C_1}$ and $\widehat{v}_\nu=\ind_{C_2}$ in the bilinear inequality, we get $K_{\mu,\nu}(p')\ge c\mu^\frac{n}{2p}\nu^\frac{1}{p}$.  
\end{example}

The rest of this section is devoted to the proof of the inequality \eqref{eq:Bilinear_Comparable_2} in Theorem~\ref{thm:Bilinear}'. We do first some reductions.

By Galilean and rotational symmetry, we can assume that
\begin{align*}
S_1 &= \{(\xi',\frac{1}{2}\abs{\xi'}^2)\mid \abs{\xi'-c_1e_1}\le c_2 \} \\
S_2 &= \{(\xi',\frac{1}{2}\abs{\xi'}^2)\mid \abs{\xi'+c_1e_1}\le c_2 \};
\end{align*}
the constant $C_\varepsilon$ in \eqref{eq:Bilinear_Comparable_2} depends on $c_1$ and $c_2$. 

It suffices to prove the local inequality
\begin{equation}\label{eq:Local_Bilinear}
\norm{f_\mu g_\nu}_{L^{p'}(B_{\mu^{-1}})} \le C_\varepsilon\mu^{\frac{n}{2p}-\varepsilon}\nu^{\frac{1}{p}-\varepsilon}\norm{f_\mu}_2\norm{g_\nu}_2.
\end{equation} 
In fact, cover $\Rs^n$ with balls $B_{\mu^{-1}}$ and choose a bump function $\zeta_{B_\mu^{-1}}\sim 1$ in $B_{\mu^{-1}}$ so that $\supp \widehat{\zeta}_{B_\mu^{-1}}\subset B_\mu(0)$. Then,
\begin{align*}
\norm{f_\mu g_\nu}_{p'} &\le \sum_{B_{\mu^{-1}}}\norm{f_\mu g_\nu}_{L^{p'}(B_{\mu^{-1}})} \\
&\lesssim \sum_{B_{\mu^{-1}}}\norm{(\widehat{f}_\mu*\widehat{\zeta}_{B_\mu^{-1}})^\vee (\widehat{g}_\nu*\widehat{\zeta}_{B_\mu^{-1}})^\vee}_{L^{p'}(B_{\mu^{-1}})}
\end{align*}
The width of the supports of $\widehat{f}_\mu*\widehat{\zeta}_{B_\mu^{-1}}$ and $\widehat{g}_\nu*\widehat{\zeta}_{B_\mu^{-1}}$ are essentially $\mu$ and $\nu$ respectively. Hence, we can apply the local bilinear inequality \eqref{eq:Local_Bilinear} to get
\begin{align*}
\norm{f_\mu g_\nu}_{p'} &\le C_\varepsilon\mu^{\frac{n}{2p}-\varepsilon}\nu^{\frac{1}{p}-\varepsilon} \sum_{B_{\mu^{-1}}}\norm{f_\mu\zeta_{B_{\mu^{-1}}}}_2\norm{g_\nu\zeta_{B_{\mu^{-1}}}}_2 \\
&\le C_\varepsilon\mu^{\frac{n}{2p}-\varepsilon}\nu^{\frac{1}{p}-\varepsilon}\Big(\sum_{B_{\mu^{-1}}}\norm{f_\mu\zeta_{B_{\mu^{-1}}}}_2^2\Big)^\frac{1}{2}\Big(\sum_{B_{\mu^{-1}}}\norm{g_\nu\zeta_{B_{\mu^{-1}}}}_2^2\Big)^\frac{1}{2} \\
&\le C_\varepsilon\mu^{\frac{n}{2p}-\varepsilon}\nu^{\frac{1}{p}-\varepsilon}\norm{f_\mu}_2\norm{g_\nu}_2,
\end{align*}
which is what we wanted to prove.

At scale $\mu^{-1}$ the function $f_\mu$ looks like $\ext{f}$ for some function $f$ in the paraboloid, so it suffices to prove the next theorem.
\begin{theorem}\label{thm:Bilinear_Modified}
Suppose that $S_1$ and $S_2$ are two open subsets of the paraboloid in $\Rs^n$ with diameter $\sim 1$ and at distance $\sim 1$ of each other. If $fdS$ is a measure supported in $S_1$ and $g_\nu$ a function with Fourier transform supported in a $\nu$-neighborhood of $S_2$, then for $1<R^\frac{1}{2}\le\nu^{-1}\le R$ and for every $\varepsilon>0$ it holds
\begin{equation}\label{eq:Bilinear_Modified}
\norm{\ext{f} g_\nu}_{L^{p'}(B_R)} \le C_\varepsilon R^{\frac{1}{2}(1-\frac{n}{p})+\varepsilon}\nu^{\frac{1}{p}-\varepsilon}\norm{f}_{L^2(S)}\norm{g_\nu}_2,
\end{equation}
where $1\le p'\le \frac{n}{n-1}$.
\end{theorem}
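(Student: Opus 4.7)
The plan is to prove Theorem~\ref{thm:Bilinear_Modified} by adapting Tao's induction-on-scales argument \cite{MR2033842} to the present asymmetric setting, where $\ext{f}$ is a genuine extension while $g_\nu$ has a much thicker frequency support (width $\nu \ge R^{-1/2}$ rather than $R^{-1}$). I would run the induction on $R$ with the quantity
\[
A(R) := \sup_{R'\le R}\;\sup_{f,\,g_{\nu'}}\frac{\|\ext{f}\, g_{\nu'}\|_{L^{p'}(B_{R'})}}{(R')^{(1-n/p)/2}(\nu')^{1/p}\|f\|_{L^2(S_1)}\|g_{\nu'}\|_2},
\]
the target being $A(R)\lesssim_\varepsilon R^{\varepsilon}$.

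First I would reduce to the endpoint $p' = n/(n-1)$. The case $p'=1$ is elementary: by Cauchy--Schwarz together with the Agmon--H\"ormander local $L^2$ bound $\|\ext{f}\|_{L^2(B_R)}\lesssim R^{1/2}\|f\|_{L^2(S_1)}$ (a standard $TT^*$ argument, consistent with the Knapp example),
\[
\|\ext{f}\, g_\nu\|_{L^1(B_R)} \le \|\ext{f}\|_{L^2(B_R)}\|g_\nu\|_{L^2(B_R)} \lesssim R^{1/2}\|f\|_2 \|g_\nu\|_2,
\]
which matches the target at $p=\infty$, and the intermediate $p'$ follow by interpolation. For the endpoint, I would foliate $g_\nu$ along the vertical (parabolic) direction by writing $\xi_n=\tfrac12|\xi'|^2+s$ with $s\in(-\nu,\nu)$:
\[
g_\nu(x) = \int_{-\nu}^{\nu} e^{i x_n s}(h^s\,dS)^\vee(x)\,ds, \qquad \int_{-\nu}^{\nu}\|h^s\|_{L^2(S_2)}^2\,ds = \|g_\nu\|_2^2,
\]
so that each slice is a genuine extension from a vertical translate of $S_2$, still at distance $\sim 1$ from $S_1$.

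With the foliation in hand, I would run Tao's wave-packet/Whitney scheme at the full scale $R$: decompose $\ext{f}$ and each $\ext{h^s}$ into tubes of dimensions $R^{1/2}\times\cdots\times R^{1/2}\times R$; Whitney-decompose $S_1\times S_2$ into dyadic transversal cap pairs at scales $\rho\in[\nu^{1/2},1]$; parabolically rescale (Theorem~\ref{thm:parabolic_rescaling}) each cap pair at scale $\rho$ to unit-scale separated caps at reduced scale $\rho^2 R$; apply the inductive hypothesis to the rescaled problem; and recombine the pieces using bilinear $L^2$ orthogonality of the wave packets together with Minkowski and Cauchy--Schwarz in $s$. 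The Cauchy--Schwarz step yields a factor $\nu^{1/2}\le\nu^{1/p}$ (using $p\ge n\ge 2$), which is admissible. The non-transversal residual arising at the stopping scale $\rho\sim\nu^{1/2}$ is absorbed by the bilinear Tomas--Stein estimate of Theorem~\ref{thm:Bilinear_TS}.

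The principal obstacle is that the target range $p'\le n/(n-1)$ lies strictly below Tao's bilinear restriction exponent $(n+2)/n$, so the naive slice-by-slice Minkowski reduction combined with Tao plus H\"older interpolation leaves a positive power of $R$ rather than the desired $R^\varepsilon$. To recover the sharp $\varepsilon$-loss, the wave-packet induction must be conducted at the full scale $R$, exploiting bilinear orthogonality of wave packets simultaneously with integration over the foliation parameter $s$, and stopped at the scale $\rho\sim\nu^{1/2}$ that matches the thickness of $g_\nu$. Handling this interaction between the two very different physical scales $R$ and $\nu^{-1}$ cleanly, and arranging the geometric sum over dyadic $\rho\in[\nu^{1/2},1]$ so that it closes with only an $\varepsilon$-loss in both $R$ and $\nu$, is the delicate technical part of the argument.
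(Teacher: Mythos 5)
Your outline has the right raw ingredients --- Tao's induction on scales, the Agmon--H\"ormander bound at $p'=1$, and the foliation of $g_\nu$ into parabolic slices --- but as it stands it conflates two distinct steps of the argument and misses the key lemma of the proof, so there is a genuine gap.

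First, you propose to Whitney-decompose $S_1\times S_2$ into transversal cap pairs at scales $\rho\in[\nu^{1/2},1]$ and parabolically rescale inside the proof of Theorem~\ref{thm:Bilinear_Modified}. In the paper this Whitney/parabolic-rescaling machinery (Theorem~\ref{thm:parabolic_rescaling}) is a \emph{consequence} of the bilinear theorem and lives in Section~\ref{sec:bilinear_Strategy}, where it is used to pass from the bilinear estimate for separated pieces back to the linear Calder\'on problem. In Theorem~\ref{thm:Bilinear_Modified} the surfaces are already at distance $\sim 1$, so no cap decomposition is needed; the induction is purely on the ball radius, replacing $B_R$ by sub-balls of radius $R'=R^{1-\delta}$, with tubes related or unrelated to each $B_{R'}$ and the stopping rule $R\sim\nu^{-1}$ (not $\rho\sim\nu^{1/2}$). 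Your scheme risks circularity and, more importantly, points at a different induction variable.

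Second, and more substantively, you decompose $\ext{f}$ \emph{and each slice} $\ext{h^s}$ into full-length tubes $R^{1/2}\times\cdots\times R^{1/2}\times R$, planning to exploit ``bilinear $L^2$ orthogonality simultaneously with integration in $s$'', and you admit that making this close with only an $\varepsilon$-loss is the delicate step. The paper resolves exactly this point by \emph{not} foliating first: $g_\nu$ is packed directly into short tubes of dimensions $\nu^{-1}\times R^{1/2}\times\cdots\times R^{1/2}$ in Section~\ref{sec:WP}, and the foliation into paraboloids \eqref{eq:average_paraboloids} is only invoked \emph{inside} the $L^2$ decoupling step. The engine that couples the $L^2$ oscillation to the tube geometry is Lemma~\ref{lemma:L2_Bound}, which bounds $\norm{\ext{f}\ext{g}}_2^2$ by $\norm{f}_1\norm{g}_1\norm{g}_\infty$ times a Radon transform of $\abs{f}$ along the separation direction; this Radon factor is what feeds the Kakeya-type transversality count in Section~\ref{sec:Kakeya}, and the \emph{short} length $\nu^{-1}$ of the $g_\nu$-tubes is what makes that transversality estimate close. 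Without something playing the role of Lemma~\ref{lemma:L2_Bound} and without the correct (asymmetric) wave-packet geometry for $g_\nu$, the ``delicate technical part'' you flag is not merely delicate --- there is no mechanism in your outline that produces the factor $\nu^{1/p}$ at the endpoint $p'=n/(n-1)$, which lies below Tao's range.
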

In fact, after a change of variables $\xi\mapsto (\xi',\frac{1}{2}\abs{\xi'}^2+t)$ we can write $f_\mu$ as
\begin{align}
f_\mu(x) &= \int\limits_{-\mu}^{\mu}\Big(\int \widehat{f}_\mu(\xi',\frac{1}{2}\abs{\xi}^2+t)e(\inner{x'}{\xi'}+x_n\frac{1}{2}\abs{\xi'}^2)\,d\xi\Big)\,e(x_nt)\,dt \notag \\
&= \int_{-\mu}^{\mu}\ext{\widehat{f}_{\mu,t}}e(x_nt)\,dt,\label{eq:average_paraboloids}
\end{align}
where $\widehat{f}_{\mu,t}$ is a parabolic slice of $\widehat{f}_\mu$. To bound the local bilinear inequality \eqref{eq:Local_Bilinear} we use Minkowski to get
\begin{equation*}
\norm{f_\mu g_\nu}_{L^{p'}(B_{\mu^{-1}})}\le \int_{-\mu}^{\mu}\norm{\ext{\widehat{f}_{\mu,t}} g_\nu}_{L^{p'}(B_{\mu^{-1}})}\,dt.
\end{equation*} 
Then, writing $\mu^{-1}=R$, we can use Theorem~\ref{thm:Bilinear_Modified} and Cauchy-Schwarz inequality to get
\begin{align*}
\norm{f_\mu g_\nu}_{p'} &\le  C_\varepsilon \mu^{\frac{1}{2}(\frac{n}{p}-1)-\varepsilon}\nu^{\frac{1}{p}-\varepsilon}\int_{-\mu}^{\mu}\norm{f_{\mu,t}}_2\,dt\;\norm{g_\nu}_{2} \\
&\le  C_\varepsilon \mu^{\frac{n}{2p}-\varepsilon}\nu^{\frac{1}{p}-\varepsilon}\norm{f_\mu}_2\norm{g_\nu}_{2}.
\end{align*}
Therefore, we must prove now Theorem~\ref{thm:Bilinear_Modified}. 

The point $p'=1$ of Theorem~\ref{thm:Bilinear_Modified} can be proven readily. By Cauchy-Schwarz and by the trace inequality $\norm{\ext{f}}_2\le CR^\frac{1}{2}\norm{f}_{L^2(S)}$ we get
\begin{equation*}
\norm{\ext{f} g_\nu}_{L^1(B_R)}\le CR^\frac{1}{2}\norm{f}_2\norm{g_\nu}_2.
\end{equation*}
Hence, it suffices to prove the inequality \eqref{eq:Bilinear_Modified} at the point $p'=\frac{n}{n-1}$.

We begin the proof in the next section with the wave packet decomposition. This decomposition is nowadays a classical change of basis, so we only outline it.

\subsection{Wave Packet Decomposition}\label{sec:WP}

Let $f$ be a function in $\Rs^{n-1}$, and decompose the space into caps $\alpha$ of radius $R^{-\frac{1}{2}}$ and center $c_\alpha\in\Rs^{n-1}$. Choose a smooth partition of unity $\{\zeta_\alpha\}$ adapted to the caps $\alpha$ so that $\sum_\alpha\zeta_\alpha^2 = 1$. Use Fourier series adapted to each $\alpha$ to expand $f\zeta_\alpha$ into frequencies $\omega$, and develop $f$ as
\begin{equation*}
f(\xi) = \abs{\alpha}^{-\frac{1}{2}}\sum_{\alpha,\omega}a(\alpha,\omega)\zeta_\alpha(\xi) e(\inner{\omega}{\xi-c_\alpha}),
\end{equation*}
where $\omega = R^\frac{1}{2}\mathbb{Z}^{n-1}$. The coefficients $a$ satisfy the next properties:
\begin{gather}
a(\alpha,\omega) = \frac{1}{\abs{\alpha}^\frac{1}{2}}\int f\zeta_\alpha e(-\inner{\omega}{\xi-c_\alpha})\,d\xi, \label{eq:coefficient_fdS}\\
\sum_{\alpha,\omega}\abs{a(\alpha,\omega)}^2 = \norm{f}_2^2. \label{eq:pitagoras_fdS}
\end{gather}
By the linearity of the extension operator, we can write $\ext{f}$ as
\begin{equation*}
\ext{f}(x)=\sum_{\alpha,\omega}a(\alpha,\omega)\phi_{T(\alpha,\omega)},
\end{equation*}
where $\phi_T$ is a function essentially supported in a tube $T$ of dimensions $R^\frac{1}{2}\times\cdots\times R^\frac{1}{2}\times R$; the angle and position of $T$ are determined by $\alpha$ and $\omega$ respectively. Furthermore,
\begin{equation*}
\abs{\phi_T(x)}\le C_M R^{-\frac{n-1}{2}}\frac{1}{\japan{R^{-\frac{1}{2}}(x'+\omega+x_nc_\alpha)}^M},\quad\text{for } \abs{x_n}\le R;
\end{equation*}
so $\phi_T$ is concentrated in a tube $T$ of direction $(-c_\alpha, 1)$ whose main axis passes through $(-\omega,0)$. We deduce also that for $\delta>0$, for $x\notin R^\delta T$, and for $\abs{x_n}\le R$ it holds
\begin{equation}\label{eq:f_outside_delta}
\abs{\phi_T(x)}\le C_\delta R^{-100n},
\end{equation}
where possibly $C_\delta\to\infty$ as $\delta\to 0$.

The function $g_\nu$ can be written similarly. We decompose $N_\nu(S_2)$ into rectangles $\beta$ of dimensions $\nu\times R^{-\frac{1}{2}}\times\cdots\times R^{-\frac{1}{2}}$ and center $c_\beta\in\Rs^n$, where $c_\beta$ is now a point in $S_2$. Arguing as before we have  
\begin{equation*}
\widehat{g}_\nu(\xi) = \abs{\beta}^{-\frac{1}{2}}\sum_{\beta,\omega}b(\alpha,\omega)\zeta_\beta(\xi) e(\inner{\omega}{\xi-c_\beta}),
\end{equation*}
where $\omega$ belongs to some rotation of the grid $\nu^{-1}\mathbb{Z}\times R^\frac{1}{2}\mathbb{Z}^{n-1}$. Again, we get
\begin{gather}
b(\beta,\omega) = \frac{1}{\abs{\beta}^\frac{1}{2}}\int \widehat{g}_\nu\zeta_\beta e(-\inner{\omega}{\xi-c_\beta})\,d\xi \label{eq:coefficient_g}\\
\sum_{\beta,\omega}\abs{b(\beta,\omega)}^2= \norm{g_\nu}_2^2. \label{eq:pitagoras_g}
\end{gather}
By the linearity of the Fourier transform, we can write $g_\nu$ as
\begin{equation*}
g_\nu = \sum_{\beta,\omega}b(\beta,\omega)\phi_{T(\beta,\omega)},
\end{equation*}
where $T$ are now tubes of dimensions $\nu^{-1}\times R^\frac{1}{2}\times\cdots\times R^\frac{1}{2}$. Again, we get
\begin{gather}
\abs{\phi_T(x)}\le C_M\nu R^{-\frac{n-1}{2}}\frac{1}{\japan{R^{-\frac{1}{2}}\abs{x'+\omega'+x_nc_\beta'}+\nu\abs{x_n+\omega_n}}^M}, \notag \\
\abs{\phi_T(x)}\le C_\delta \nu R^{-100n}, \quad\text{for } x\notin R^\delta T \text{ and for } \delta>0. \label{eq:g_outside_delta}
\end{gather}

We replace the wave packet decomposition into the bilinear inequality \eqref{eq:Bilinear_Modified}, so we must prove that for $\norm{a}_2=1$ and $\norm{b}_2=1$ we have
\begin{equation*}
\norm{\sum_{T_1,T_2}a_{T_1} b_{T_2} \phi_{T_1}\phi_{T_2}}_{L^\frac{n}{n-1}(B_R)}\le C_\varepsilon R^\varepsilon\nu^{\frac{1}{n}-\varepsilon}.
\end{equation*}
Since $\abs{\phi_{T_1}}$ and $\abs{\phi_{T_2}}$ decay strongly outside the tubes, then we can ignore all the tubes that do not intersect the ball $10B_R$, so the number of tubes in each group is $\lesssim R^{Cn}$; recall that $\nu^{-1}\ge R^\frac{1}{2}$. 

Now, for all the terms that satisfy $\abs{a_{T_1}}$ or $\abs{b_{T_2}}\lesssim R^{-C_n}$ the contribution to the bilinear inequality is negligible, so we can ignore all these terms and do pigeonholing in $\abs{a_{T_1}}$ and $\abs{b_{T_2}}$; here, we introduce logarithmic losses. Hence, for two collections of tubes $\mathbb{T}_1$ and $\mathbb{T}_2$ that intersect the ball $10B_R$ we must prove that
\begin{equation}\label{eq:Bilinear_Cleaned}
\norm{\sum_{T_1\in \mathbb{T}_1,T_2\in\mathbb{T}_2} \phi_{T_1}\phi_{T_2}}_{L^\frac{n}{n-1}(B_R)}\le C_\varepsilon R^\varepsilon\nu^{\frac{1}{n}-\varepsilon}\abs{\mathbb{T}_1}^\frac{1}{2}\abs{\mathbb{T}_2}^\frac{1}{2}.
\end{equation}
The proof of this inequality begins with an induction on scales in the next section.

\subsection{Induction on Scales}\label{sec:Induction_Scales}

We want to control the quantity
\begin{equation*}
K_{\nu}(R):=\sup_{\norm{f}_2=\norm{g_\nu}_2=1}\norm{\ext{f}g_\nu}_{L^{p'}(B_R)}.
\end{equation*}
Rough estimates show that $K_{\mu,\nu}(R)$ is finite, thus well defined, and we want to prove that $K_{\mu,\nu}(R)\le C_\varepsilon R^\varepsilon \nu^{\frac{1}{n}-\varepsilon}$.

The induction on scales seeks to control $K_{\mu,\nu}(R)$ in terms of $K_{\mu,\nu}(R^{1-\delta})$ for some $\delta>0$, which we keep fixed in what follows, so we lower scales and stop at scale $\sim\nu^{-1}$, when Tao's bilinear theorem provides the best possible upper bound, up to $\varepsilon$-losses. From now on, we write $R'$ for $R^{1-\delta}$.  

We begin the induction by breaking up the ball $B_R$ into balls $B_{R'}$. Now, we define a relationship between balls and tubes, so that a tube is related to a ball if the contribution of $\phi_T$ to the bilinear term is large in that ball. We need first decompose $B_R$ into balls $q$ of radius $R^\frac{1}{2}$, and now we introduce the following group of definitions for a dyadic number $\mu_2$:
\begin{gather}
\mathbb{T}_2(q):=\{T_2\in\mathbb{T}_2\mid R^\delta T_2\cap q\neq\emptyset\} \label{eq:tubes_q}\\
q(\mu_2):=\{q\subset B_R\mid \mu_2\le \abs{\mathbb{T}_2(q)}<2\mu_2\} \label{eq:multiplicity}\\
\lambda(T_1,\mu_2,B_{R'}):=\abs{\{q\in q(\mu_2)\mid q\subset B_{R'} \text{ and }R^\delta T_1\cap  q\neq\emptyset\}}.
\end{gather}
 
\begin{definition}[Relation between tubes and balls]
For every number $\mu_2$ and every tube $T_1\in\mathbb{T}_1$ choose a ball $B_{R'}^*(\mu_2, T_1)$, if it exists, that satisfies
\begin{equation*}
\lambda(T_1,\mu_2,B^*_{R'}) = \max_{B_{R'}} \lambda(T_1,\mu_2,B_{R'})>0.
\end{equation*} 
We say that a tube $T_1\in\mathbb{T}_1$ is related to a ball $B_{R'}\subset B_R$, or $T_1\sim B_{R'}$, if $B_{R'}\subset 10B_{R'}^*(\mu_2,T_1)$ for some $\mu_2$. The negation of $T_1\sim B_{R'}$ is $T_1\nsim B_{R'}$. Symmetrically, we can define a relation between tubes $T_2\in \mathbb{T}_2$ and balls $B_{R'}$.
\end{definition}

Every tube in $\mathbb{T}_j$ intersects a number $\lesssim R^\delta$ of balls $B_{R'}\subset B_R$, but each tube is related only to $\lesssim \log R$ balls. This follows from the condition $1\le \mu_2\lesssim R^{\frac{n-1}{2}+C\delta}$.

Now, we bound the bilinear term as
\begin{align}
\norm{\sum_{\substack{T_1\in \mathbb{T}_1 \\ T_2\in\mathbb{T}_2}}\phi_{T_1}\phi_{T_2}}_{L^{p'}(B_R)}&\le \sum_{B_{R'}\subset B_R}\norm{\sum_{T_1,T_2} \phi_{T_1}\phi_{T_2}}_{L^{p'}(B_{R'})} \notag\\
&\le \sum_{B_{R'}\subset B_R}\Big(\norm{\sum_{T_1\sim B_{R'},T_2\sim B_{R'}} \phi_{T_1}\phi_{T_2}}_{L^{p'}(B_{R'})} + \notag \\
&+ \norm{\sum_{T_1\nsim B_{R'},T_2} \phi_{T_1}\phi_{T_2}}_{L^{p'}(B_{R'})}+ \norm{\sum_{T_1\sim B_{R'},T_2\nsim B_{R'}} \phi_{T_1}\phi_{T_2}}_{L^{p'}(B_{R'})}\Big). \notag \\
&= \text{I} + \text{II} + \text{III} \label{eq:Bil_simplified}
\end{align}
For the first term I at the right we use the inductive hypothesis, Cauchy-Schwarz, and the bound $\abs{\{B_{R'}\mid T_j\sim B_{R'}\}}\lesssim\log R$ to get
\begin{align}
\sum_{B_{R'}\subset B_R}\norm{\sum_{\substack{T_1\sim B_{R'} \\ T_2\sim B_{R'}}} \phi_{T_1}\phi_{T_2}}_{L^{p'}(B_{R'})} &\le K(R')\sum_{_{B_{R'}}\subset B_R}\abs{\{T_1\sim B_{R'}\}}^\frac{1}{2}\abs{\{T_2\sim B_{R'}\}}^\frac{1}{2} \notag \\
&\le K(R')\Big(\sum_{B_{R'}, T_1}\ind_{\{T_1\sim B_{R'}\}}\Big)^\frac{1}{2}\Big(\sum_{B_{R'}, T_2}\ind_{\{T_2\sim B_{R'}\}}\Big)^\frac{1}{2} \notag \\
&\le C(\log R) K(R')\abs{\mathbb{T}_1}^\frac{1}{2}\abs{\mathbb{T}_2}^\frac{1}{2}. \label{eq:Inductive_Hypothesis}
\end{align}
We have bounded so the main contribution with an acceptable logarithmic loss.

We turn now to II in \eqref{eq:Bil_simplified}; the term III can be similarly controlled, so we will not describe it. We bound the $L^\frac{n}{n-1}$-norm by interpolation between the points $p'=1$ and $p'=2$. For $p'=1$ we use Cauchy-Schwarz and the trace inequality to get
\begin{equation}\label{eq:point_p1}
\norm{\sum_{T_1\nsim B_{R'},T_2} \phi_{T_1}\phi_{T_2}}_{L^1(B_{R'})}\lesssim R^\frac{1}{2}\abs{\mathbb{T}_1}^\frac{1}{2}\abs{\mathbb{T}_2}^\frac{1}{2};
\end{equation}
recall that $\sum_{T_1\nsim B_{R'}}\phi_{T_1} = \ext{f}$ for some function $f$ in $S$, and $\sum_{T_2}\phi_{T_2} = g_\nu$ for some function $g_\nu$, so we only applied the trace theorem to $\ext{f}$, and used \eqref{eq:pitagoras_fdS} and \eqref{eq:pitagoras_g}. We are left with the point $p'=2$.

If we are to prove \eqref{eq:Bilinear_Cleaned} by interpolation, we must get the upper bound
\begin{equation*}
\norm{\sum_{T_1\nsim B_{R'},T_2} \phi_{T_1}\phi_{T_2}}_{L^2(B_{R'})}\lesssim_\delta R^{\frac{1}{2}(1-\frac{n}{2})+C\delta}\nu^\frac{1}{2}\abs{\mathbb{T}_1}^\frac{1}{2}\abs{\mathbb{T}_2}^\frac{1}{2}.
\end{equation*}
This inequality is in general false, if we do not put constrains over the tubes. The simple example $f=1$ and $g_\nu=1$ in $N_\nu(S_2)$ is enough, and worst examples can be given. Hence, we have to exploit the special structure of the tubes $T_1\nsim B_{R'}$.

We use the decomposition of $B_R$ into cubes $q$ of radius $R^\frac{1}{2}$ and the definition \eqref{eq:multiplicity} to write the $L^2$-norm as
\begin{equation*}
\norm{\sum_{T_1\nsim B_{R'},T_2} \phi_{T_1}\phi_{T_2}}_{L^2(B_{R'})}^2 = \sum_{\mu_2}\sum_{q\in q(\mu_2)}\norm{\sum_{T_1\nsim B_{R'},T_2} \phi_{T_1}\phi_{T_2}}_{L^2(q)}^2.
\end{equation*}
By pigeonholing, it suffices to control the norm for a fixed $\mu_2$. We introduce now the definitions 
\begin{gather}
\lambda(T_1,\mu_2):=\abs{\{q\in q(\mu_2)\mid R^\delta T_1\cap  q\neq\emptyset\}} \\
\mathbb{T}_1[\mu_2,\lambda_1]:=\{T_1\in\mathbb{T}_1\mid \lambda_1\le\lambda(T_1,\mu_2)<2\lambda_1\}.
\end{gather}
Since $1\le\lambda_1\lesssim R^{\frac{1}{2}+C\delta}$, by pigeonholing again it suffices to prove
\begin{equation}\label{eq:bilinear_L2_pigeon}
\sum_{q\in q(\mu_2)}\norm{\sum_{\substack{T_1\nsim B_{R'}, T_1\in \mathbb{T}_1[\mu_2,\lambda_1] \\ T_2}} \phi_{T_1}\phi_{T_2}}_{L^2(q)}^2 \lesssim_\delta R^{1-\frac{n}{2}+C\delta}\nu\abs{\mathbb{T}_1}\abs{\mathbb{T}_2}.
\end{equation}
The case $\lambda(T_1,\mu_2)=0$ is handled with \eqref{eq:f_outside_delta}. In the next section, we use the special nature of the $L^2$-norm to decouple the frequencies. 

\subsection{Decoupling at Scale $R^\frac{1}{2}$} \label{sec:decoupling}

We need first a $L^2$ upper bound of the bilinear operator. Recall that the extension operator is defined as
\begin{equation*}
\ext f(x)=\int_{\mathbb{R}^{n-1}} f(\xi)e(\inner{x'}{\xi'}+x_n\varphi(\xi'))\,d\xi',
\end{equation*}
where $\varphi(\xi')=\frac{1}{2}\abs{\xi'}^2$ and $\xi=(\xi',\xi_n)$. For an open subset $S_1$ of the paraboloid, we denote by $\pi(S_1)$ its projection to $\mathbb{R}^{n-1}$.

We need also the Radon transform of a function, and we define it as
\begin{equation*}
R f(\xi',\theta):= \int_{\Rs^{n-1}} f(\xi'+\eta)\delta\big(\inner{\eta}{\theta}\big)\,d\eta;
\end{equation*}
the Radon transform $R f(\xi',\theta)$ is the integral over the hyperplane with normal $\theta$ that passes through $\xi'$.

\begin{lemma}\label{lemma:L2_Bound}
Let $S_1$ and $S_2$ be two open subsets of the paraboloid with radius $\sim 1$ and at distance $\sim 1$ of each other. Suppose that $fdS$ and $gdS$ are measures with support in $S_1$ and $S_2$ respectively. Then, it holds that
\begin{equation}
\norm{\ext{f}\ext{g}}_2^2 \le C\norm{f}_1\sup_{\substack{\xi'\in \pi(S_1) \\ \xi''\in\pi(S_2)}}R\abs{f}\big(\xi',\frac{\xi'-\xi''}{\abs{\xi'-\xi''}}\big)\norm{g}_1\norm{g}_\infty
\end{equation} 
\end{lemma}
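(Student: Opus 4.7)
By Plancherel, the target estimate is equivalent to controlling $\norm{fdS\ast gdS}_2^2$, and expanding the $L^2$ norm of this convolution as an inner product gives the four-fold integral
\begin{equation*}
\norm{\ext{f}\ext{g}}_2^2 = \iiiint f(\zeta_1)g(\zeta_2)\overline{f(\zeta_3)g(\zeta_4)}\,\delta(\zeta_1+\zeta_2-\zeta_3-\zeta_4)\,dS_1(\zeta_1)dS_2(\zeta_2)dS_1(\zeta_3)dS_2(\zeta_4).
\end{equation*}
Taking absolute values inside, and writing $\zeta_j=(\eta_j,\tfrac12\abs{\eta_j}^2)$ with $dS_j=d\eta_j$, I would use the $(n{-}1)$-dimensional part of the delta to set $\eta_2=\eta_3+\eta_4-\eta_1$.

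The key algebraic step is the paraboloid identity
\begin{equation*}
\tfrac12\abs{\eta_3+\eta_4-\eta_1}^2+\tfrac12\abs{\eta_1}^2-\tfrac12\abs{\eta_3}^2-\tfrac12\abs{\eta_4}^2=(\eta_3-\eta_1)\cdot(\eta_4-\eta_1),
\end{equation*}
which collapses the remaining scalar delta to $\delta\big((\eta_3-\eta_1)\cdot(\eta_4-\eta_1)\big)$, encoding a perpendicularity constraint between $\eta_3-\eta_1$ and $\eta_4-\eta_1$. Bounding $\abs{g(\zeta_2)}\le\norm{g}_\infty$ then reduces the problem to
\begin{equation*}
\norm{\ext{f}\ext{g}}_2^2\le \norm{g}_\infty\iiint \abs{f}(\eta_1)\abs{f}(\eta_3)\abs{g}(\eta_4)\,\delta\big((\eta_3-\eta_1)\cdot(\eta_4-\eta_1)\big)\,d\eta_1\,d\eta_3\,d\eta_4.
\end{equation*}

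To finish, I would substitute $\zeta=\eta_3-\eta_1$, use $\delta(\zeta\cdot v)=\abs{v}^{-1}\delta(\zeta\cdot v/\abs{v})$ for any nonzero $v$, and recognize the inner integral over $\zeta$ as $\abs{\eta_4-\eta_1}^{-1}R\abs{f}\big(\eta_1,(\eta_1-\eta_4)/\abs{\eta_1-\eta_4}\big)$, using that the Radon transform is insensitive to the sign of the direction. The separation hypothesis $\operatorname{dist}(S_1,S_2)\sim 1$ gives $\abs{\eta_4-\eta_1}\gtrsim 1$, so this factor is harmless; pulling out the supremum of $R\abs{f}$ and integrating out $\eta_1$ and $\eta_4$ produces $\norm{f}_1\norm{g}_1$, yielding the claimed bound.

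The routine work is the Plancherel reduction and the taking of absolute values; the substantive step is the paraboloid identity that converts the on-shell delta into the perpendicularity relation $(\eta_3-\eta_1)\cdot(\eta_4-\eta_1)=0$. This is what turns the four-fold integral into a Radon transform of $\abs{f}$ along hyperplanes whose normal direction is controlled by the geometric separation of $S_1$ and $S_2$; without the separation, the factor $\abs{\eta_1-\eta_4}^{-1}$ would be singular and the argument would break down.
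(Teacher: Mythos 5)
Your proposal is correct and follows essentially the same route as the paper: expand $\norm{\ext{f}\ext{g}}_2^2$ via Plancherel into the four-fold integral with the on-shell momentum–conservation delta, use the paraboloid algebraic identity to collapse the remaining scalar delta to the perpendicularity constraint $(\eta_3-\eta_1)\cdot(\eta_4-\eta_1)=0$, bound one $g$-factor by $\norm{g}_\infty$, and recognize the surviving inner integral as a Radon transform of $\abs{f}$ with the Jacobian $\abs{\eta_4-\eta_1}^{-1}$ controlled by the separation of $S_1$ and $S_2$. The paper phrases the expansion slightly differently (writing $\abs{\ext{f}}^2$ and $\abs{\ext{g}}^2$ as inverse Fourier transforms of $F$ and $G$ and evaluating $(F*G)$ at the origin), but the variable identifications $\eta_1=\xi_2'$, $\eta_3=\xi_2'+\xi_1'$, $\eta_4=\xi_2''$ show the two calculations are literally the same.
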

\begin{proof}
We compute the square of the extension operator as
\begin{align*}
\abs{\ext{f}(x)}^2 &= \int_{\mathbb{R}^{2(n-1)}} f(\xi_1'+\xi_2')\conj{f}(\xi_2') \\
&\hspace{2cm} e(\inner{x'}{\xi_1'}+x_n(\varphi(\xi_1'+\xi_2')-\varphi(\xi_2')))\,d\xi_1' d\xi_2' \\
&=\int \Big(\int f(\xi_1'+\xi_2')\conj{f}(\xi_2')\delta(\varphi(\xi_1'+\xi_2')-\varphi(\xi_2')-t)\,d\xi_2'\Big)e(\inner{x}{\xi_1})\,d\xi_1 \\
&:= \widecheck{F}(x),
\end{align*}
where $F$ is the function in parentheses. Thus, we get
\begin{equation*}
\norm{\ext{f}\ext{g}}_2^2 =  \int (F*G)^\vee(x)\,dx = (F*G)^\vee(0).
\end{equation*}
We develop the convolution and change variables, so that
\begin{multline*}
\norm{\ext{f}\ext{g}}_2^2 = \int f(\xi_2')\conj{g}(\xi_2'') \\
\int\conj{f}(\xi_2'+\xi_1')g(\xi_2''+\xi_1')\delta(\varphi(\xi_2')-\varphi(\xi_1'+\xi_2')+\xi_{1,n})\delta(\varphi(\xi_1'+\xi_2'')-\varphi(\xi_2'')-\xi_{1,n})\,d\xi_1 \\
\,d\xi_2' d\xi_2''.
\end{multline*}
We can use Fubini to put inside the integral with respect to $\xi_{1,n}$, so that after the change of variables $\xi_{1,n}\mapsto \xi_{1,n}+\varphi(\xi_1'+\xi_2'')-\varphi(\xi_2'')$ we get
\begin{equation}\label{eq:delta_delta}
\begin{split}
I &:= \int \delta(\varphi(\xi_2')-\varphi(\xi_1'+\xi_2')+\xi_{1,n})\delta(\varphi(\xi_1'+\xi_2'')-\varphi(\xi_2'')-\xi_{1,n})\,d\xi_{1,n} \\
&= \delta(\inner{\xi_1'}{\xi_2'-\xi_2''}).
\end{split}
\end{equation}
Then, the $L^2$ norm gets into
\begin{align*}
\norm{\ext{f}\ext{g}}_2^2 &\le \int \abs{f}(\xi_2')\abs{g}(\xi_2'')\int\abs{f}(\xi_2'+\xi_1')\abs{g}(\xi_2''+\xi_1')\delta(\inner{\xi_1'}{\xi_2'-\xi_2''})\,d\xi_1' d\xi_2' d\xi_2'' \\
&\le \norm{f}_1\norm{g}_1\norm{g}_\infty \sup_{\xi_2',\xi_2''}\int\abs{f}(\xi_2'+\xi_1')\delta(\inner{\xi_1'}{\xi_2'-\xi_2''})\,d\xi_1'.
\end{align*}
Finally, by the identity $\delta(at)=a^{-1}\delta(t)$, and the condition of separation between $S_1$ and $S_2$, we get
\begin{equation*}
\int\abs{f}(\xi_2'+\xi_1')\delta(\inner{\xi_1'}{\xi_2'-\xi_2''})\,d\xi_1' \le CR\abs{f}\big(\xi_2',\frac{\xi_2'-\xi_2''}{\abs{\xi_2'-\xi_2''}}\big),
\end{equation*}
which concludes the proof.
\end{proof}

We use now Lemma~\ref{lemma:L2_Bound} to bound each term at the left side of the inequality \eqref{eq:bilinear_L2_pigeon}. To simplify, let us define $\mathbb{T}_1' := \{T_1\nsim B_{R'}\}\cap\mathbb{T}_1[\mu_2,\lambda_1]$. By \eqref{eq:coefficient_fdS} and \eqref{eq:coefficient_g} we can neglect the contribution from tubes such that $R^\delta T\cap q=\emptyset$. We define so the functions
\begin{align*}
f_q(\xi) &:= \abs{\alpha}^{-\frac{1}{2}}\sum_{T_1\in \mathbb{T}_1'(q)}\zeta_\alpha(\xi) e(\inner{\omega}{\xi-c_\alpha}) \\
\widehat{g}_{\nu,q}(\xi) &:= \abs{\beta}^{-\frac{1}{2}}\sum_{T_2\in \mathbb{T}_2(q)}\zeta_\beta(\xi) e(\inner{\omega}{\xi-c_\beta}).
\end{align*}
We write $g_{\nu,q}$ as an average over paraboloids as in \eqref{eq:average_paraboloids}, and by Minkowski and Cauchy-Schwarz we get
\begin{align*}
\norm{\sum_{T_1\in \mathbb{T}_1'(q), T_2\in\mathbb{T}_2(q)} \phi_{T_1}\phi_{T_2}}_{L^2(q)}^2 &\le \norm{\ext{f_q}g_{\nu,q}}_2^2 \\
&\le \norm{\ext{f_q}\int \ext{\widehat{g}_{\nu,q}^t}e(x_n t)\,dt}_2^2 \\
&\le \nu\int\norm{\ext{f_q}\ext{\widehat{g}_{\nu,q}^t}}_2^2\,dt
\end{align*}
We apply Lemma~\ref{lemma:L2_Bound} to the integrand, using the inequalities
\begin{gather*}
\norm{f_q}_1\le R^{-\frac{n-1}{4}}\abs{\mathbb{T}'_1(q)} \\
\norm{\widehat{g}_{\nu,q}^t}_1 \le \nu^{-\frac{1}{2}} R^{-\frac{n-1}{4}}\abs{\mathbb{T}_2(q)}, \qquad \norm{\widehat{g}_{\nu,q}^t}_\infty \le \nu^{-\frac{1}{2}} R^{\frac{n-1}{4}+C\delta},
\end{gather*}
to get
\begin{equation}\label{eq:decouple_1}
\norm{\sum_{\substack{T_1\in \mathbb{T}_1'(q) \\ T_2\in\mathbb{T}_2(q)}} \phi_{T_1}\phi_{T_2}}_{L^2(q)}^2 \le C\nu R^{-\frac{n-1}{4}+C\delta}\abs{\mathbb{T}'_1(q)}\abs{\mathbb{T}_2(q)}\sup_{\substack{\xi'\in \pi(S_1) \\ \xi''\in\pi(S_2)}}R\abs{f_q}\big(\xi',\frac{\xi'-\xi''}{\abs{\xi'-\xi''}}\big).
\end{equation}

Let $\mathbb{T}_1'(q)(\xi',\xi'-\xi'')$ denote the collection of tubes in $\mathbb{T}_1'(q)$ such that the corresponding cap $\alpha$ intersects the hyperplane with normal $(\xi'-\xi'')/\abs{\xi'-\xi''}$ that passes through $\xi'$. Then,
\begin{align*}
\sup_{\substack{\xi'\in \pi(S_1) \\ \xi''\in\pi(S_2)}}R\abs{f_q}\big(\xi',\frac{\xi'-\xi''}{\abs{\xi'-\xi''}}\big) &\le R^{-\frac{n-1}{4}+\frac{1}{2}}\sup_{\substack{\xi'\in \pi(S_1) \\ \xi''\in\pi(S_2)}}\abs{\mathbb{T}_1'(q)(\xi',\xi'-\xi'')} \\
&:= R^{-\frac{n-1}{4}+\frac{1}{2}}\nu(q,\mu_2,\lambda_1);
\end{align*}
we choose the last definition with the same notation as Tao in \cite{MR2033842}. We replace in \eqref{eq:decouple_1} to find
\begin{equation*}
\norm{\sum_{T_1\in \mathbb{T}_1'(q), T_2\in\mathbb{T}_2(q)} \phi_{T_1}\phi_{T_2}}_{L^2(q)}^2 \le C\nu R^{1-\frac{n}{2}+C\delta}\nu(q,\mu_2,\lambda_1)\abs{\mathbb{T}'_1(q)}\abs{\mathbb{T}_2(q)},
\end{equation*}
where $\mathbb{T}_1' := \{T_1\nsim B_{R'}\}\cap\mathbb{T}_1[\mu_2,\lambda_1]$. Summing over all the cubes $q\in q(\mu_2)$ we get
\begin{equation}\label{eq:bilinear_decoupled}
\sum_{q\in q(\mu_2)}\norm{\sum_{T_1\in \mathbb{T}_1'(q), T_2\in\mathbb{T}_2(q)} \phi_{T_1}\phi_{T_2}}_{L^2(q)}^2 \le C\nu R^{1-\frac{n}{2}+C\delta}\sum_{q\in q(\mu_2)}\nu(q,\mu_2,\lambda_1)\abs{\mathbb{T}'_1(q)}\abs{\mathbb{T}_2(q)}.
\end{equation}
The term at the right does not involve oscillations, so we achieved a decoupling of the oscillating tubes at the left. To conclude the proof of \eqref{eq:bilinear_L2_pigeon}, we must get an upper bound of $\nu(q,\mu_2,\lambda_1)$, which we do in the next section.
 
\subsection{A Kakeya-type Estimate} \label{sec:Kakeya}

In this section we aim to prove the inequality
\begin{equation}\label{eq:nu_term}
\nu(q_0,\mu_2,\lambda_1)\lesssim R^{C\delta}\frac{\abs{\mathbb{T}_2}}{\mu_2\lambda_1},
\end{equation}
for some fixed $q_0\in q(\mu_2)$, $\mu_2$ and $\lambda_1$. For any $\xi'\in\pi(S_1)$ and $\xi''\in\pi(S_2)$ we consider then the following bilinear expression
\begin{equation*}
B:=\int\limits_{\substack{q\in q(\mu_2) \\ B_R\backslash 10B_{R'}}} \sum_{T_1\in\mathbb{T}'_1(q_0)(\xi',\xi'-\xi'')}\ind_{2R^\delta T_1}\sum_{T_2\in \mathbb{T}_2}\ind_{2R^\delta T_2}.
\end{equation*}

By the definition of $q(\mu_2)$ we get
\begin{equation*}
B\gtrsim \mu_2\sum_{T_1\in\mathbb{T}'_1(q_0)(\xi',\xi'-\xi'')}\int_{\substack{q\in q(\mu_2) \\ B_R\backslash 10B_{R'}}} \ind_{2R^\delta T_1}.
\end{equation*}
Since for $T_1\in \{T_1\nsim B_{R'}\}\cap\mathbb{T}_1[\mu_2,\lambda_1]$ it holds that $\abs{\{q\in q(\mu_2)\mid R^\delta T_1\cap q \neq \emptyset\}}\sim\lambda_1$, we see that 
\begin{equation*}
\abs{\{q\in q(\mu_2)\mid q\subset B_R\backslash 10B_{R'}\text{ and } R^\delta T_1\cap q \neq \emptyset\}}\gtrsim R^{-\delta}\lambda_1.
\end{equation*}
Then,
\begin{equation}\label{eq:B_lower}
B \gtrsim R^{\frac{n}{2}-C\delta}\lambda_1\mu_2 \abs{\mathbb{T}'_1(q_0)(\xi',\xi'-\xi'')}
\end{equation}

To get an upper bound of $B$, we re-order the summations so that
\begin{equation*}
B \le\sum_{T_2\in\mathbb{T}_2}\int_{B_R\backslash 10B_{R'}} \ind_{2R^\delta T_2}\sum_{T_1\in\mathbb{T}'_1(q_0)(\xi',\xi'-\xi'')}\ind_{2R^\delta T_1}.
\end{equation*}
Since all the tubes intersect $q_0\subset B_{R'}$, we see that
\begin{equation*}
\sum_{T_1\in\mathbb{T}'_1(q_0)(\xi',\xi'-\xi'')}\ind_{2R^\delta T_1}(x)\lesssim R^{C\delta} \quad\text{for } x\in B_R\backslash 10B_{R'}.
\end{equation*}
The tubes in $\mathbb{T}'_1(q_0)(\xi',\xi'-\xi'')$ have directions $(-c_\alpha,1)$, where $c_\alpha$ lies at distance $<R^{-\frac{1}{2}}$ from a hyperplane with normal direction $\xi'-\xi''$ that passes through $\xi'$. Then, the main axis of all the tubes in $\mathbb{T}'_1(q_0)(\xi',\xi'-\xi'')$ make an angle $<R^{-\frac{1}{2}}$ with a hyperplane with normal direction $(\xi'-\xi'', \inner{\xi'}{\xi'-\xi''})$ that passes through $q_0$. It amounts to saying that the support of $\sum_{T_1\in\mathbb{T}'_1(q_0)(\xi',\xi'-\xi'')}\ind_{2R^\delta T_1}$ lies inside the $R^{\frac{1}{2}+\delta}$-neighborhood of a hyperplane that passes through $q_0$. Furthermore, every tube from $\mathbb{T}_2$ intersects the hyperplane transversally, making an angle $> c$ uniformly. Then,
\begin{equation}\label{eq:B_upper}
B\lesssim R^{\frac{n}{2}+C\delta}\abs{\mathbb{T}_2}.
\end{equation}

We use \eqref{eq:B_lower} and \eqref{eq:B_upper} to conclude that
\begin{equation*}
\abs{\mathbb{T}'_1(q_0)(\xi',\xi'-\xi'')}\lesssim R^{C\delta}\frac{\abs{\mathbb{T}_2}}{\lambda_1\mu_2},
\end{equation*}
which is what we wanted to prove.

\subsection{End of the Proof} \label{sec:end_Bilinear}

In this section we reap  all the bounds we have obtained. We plug \eqref{eq:nu_term} into \eqref{eq:bilinear_decoupled} to get
\begin{align*}
\sum_{q\in q(\mu_2)}\norm{\sum_{\substack{T_1\in \mathbb{T}_1'(q) \\ T_2\in\mathbb{T}_2(q)}} \phi_{T_1}\phi_{T_2}}_{L^2(q)}^2 &\le \nu R^{1-\frac{n}{2}+C\delta}\abs{\mathbb{T}_2}\sum_{q\in q(\mu_2)}\lambda_1^{-1}\abs{\mathbb{T}_1[\mu_2,\lambda_1](q)} \\
&\lesssim \nu R^{1-\frac{n}{2}+C\delta}\abs{\mathbb{T}_2}\sum_{T_1\in \mathbb{T}_1[\lambda_1,\mu_2]}\lambda_1^{-1}\sum_{\substack{q\in q(\mu_2) \\ }}\ind_{\{T_1\cap R^\delta q\neq\emptyset\}} \\
&\lesssim \nu R^{1-\frac{n}{2}+C\delta}\abs{\mathbb{T}_1}\abs{\mathbb{T}_2},
\end{align*}
This concludes the proof of \eqref{eq:bilinear_L2_pigeon}.

We interpolate the bilinear norm between the points $p'=1$ in \eqref{eq:point_p1} and $p'=2$ in \eqref{eq:bilinear_L2_pigeon} to get
\begin{equation*}
\norm{\sum_{T_1\nsim B_{R'},T_2} \phi_{T_1}\phi_{T_2}}_{L^\frac{n}{n-1}(B_{R'})}\le C_\delta (\log R)^CR^{C\delta}\nu^\frac{1}{n}\abs{\mathbb{T}_1}^\frac{1}{2}\abs{\mathbb{T}_2}^\frac{1}{2}.
\end{equation*}
This bound joins the inequalities \eqref{eq:Bil_simplified} and \eqref{eq:Inductive_Hypothesis} to yield
\begin{equation*}
\norm{\sum_{\substack{T_1\in \mathbb{T}_1 \\ T_2\in\mathbb{T}_2}}\phi_{T_1}\phi_{T_2}}_{L^\frac{n}{n-1}(B_R)}\le C_\delta(\log R)^C(K_\nu(R')+R^{C\delta}\nu^\frac{1}{n}) \abs{\mathbb{T}_1}^\frac{1}{2}\abs{\mathbb{T}_2}^\frac{1}{2};
\end{equation*}
in other words,
\begin{equation*}
K_\nu(R)\le C_\delta(\log R)^C(K_\nu(R^{1-\delta})+R^{C\delta}\nu^\frac{1}{n}).
\end{equation*}
When we iterate, we get at the $N$-th step
\begin{equation*}
K_\nu(R)\le C_\delta^N (\log R)^{NC}(K_\nu(R^{(1-\delta)^N})+NR^{C\delta}\nu^\frac{1}{n}).
\end{equation*}
We stop when $R^{(1-\delta)^N}\le \nu^{-1}< R^{(1-\delta)^{N-1}}$; the number of steps is 
\begin{equation*}
N\le -\frac{1}{\log(1-\delta)}+1\le 2\delta^{-1}.
\end{equation*}
If $r\le \nu^{-1}$, then we can average over translations of the paraboloid and apply Tao's bilinear to get $K_\nu(r)\le C_\varepsilon r^{1-\frac{n+2}{2p}+\varepsilon}\nu^\frac{1}{2}$. We have thus that
\begin{equation*}
K_\nu(R)\le C_\delta R^{C\delta}(\nu^{-1+\frac{n+2}{2n}+\frac{1}{2}}+\nu^\frac{1}{n})\le C_\delta R^{C\delta}\nu^\frac{1}{n}.
\end{equation*}
This concludes the proof of Theorem~\ref{thm:Bilinear_Modified}, which implies Theorem~\ref{thm:Bilinear}'.

\subsubsection{Additional Remarks}

We indicate here the changes we need to do for surfaces of elliptic type or the hemisphere. The argument is sufficiently robust to admit perturbations.

For surfaces of $\varepsilon$-elliptic type, the semi-norms $\norm{\partial^N\Phi}_\infty$ enter in the constants $C_\delta$ of \eqref{eq:f_outside_delta} and \eqref{eq:g_outside_delta}. Since the eigenvalues of $D^2\Phi$ are close to one, then the tubes have approximately the same length. 

The delta function in \eqref{eq:delta_delta} gets into
\begin{equation*}
\delta(\Phi(\xi_2')-\Phi(\xi_1'+\xi_2')+\Phi(\xi_1'+\xi_2'')-\Phi(\xi_2''))=\delta(\inner{A(\xi_2'-\xi_2'')}{\xi_1'})
\end{equation*} 
for some matrix $A$ with eigenvalues in $[1-\varepsilon,1+\varepsilon]$. Then $\abs{\inner{A(\xi_2'-\xi_2'')}{\xi_1'}-\inner{\xi_2'-\xi_2''}{\xi_1'}}\le C\varepsilon$, and instead of an integral over the hyperplane $H$ with normal direction $\xi_2'-\xi_2''$ that passes through $\xi_2'$, we integrate over a $(n-2)$-surface $\tilde{H}$ that lies in a $\varepsilon$-neighborhood of $H$ and passes through $\xi_2'$. 

A tube associated with a cap with center $c_\alpha$ has velocity $(-\nabla\Phi(c_\alpha),1)$. If $\tilde{P}\subset\Rs^n$ is a $(n-1)$-cone with center in a cube $q$ generated by all the lines with directions $(-\nabla\Phi(\eta),1)$ for $\eta\in\tilde{H}$, then we must verify that all the tubes coming from the separated set $S_2$ are transversal to $\tilde{P}$. In fact, notice that for any point $\xi_2'+\xi_1'\in\tilde{H}$, a vector $v$ tangent to $\tilde{H}$ satisfies the equation
\begin{equation*}
\inner{\nabla\Phi(\xi_1'+\xi_2'')-\nabla\Phi(\xi_1'+\xi_2')}{v}=0;
\end{equation*}
hence, $\inner{A(\xi_2''-\xi_2')}{v}=0$ for some matrix $A$ close to $I$. Then, the vectors normal to $\tilde{P}$ have the form $(A(\xi_2''-\xi_2'), \inner{\nabla\Phi(\xi_2'+\xi_1')}{A(\xi_2''-\xi_2')})$. If we take the inner product of these vectors with $(-\nabla\Phi(\eta_2),1)$ for $\eta_2\in\pi(S_2)$, then we get 
\begin{equation*}
\inner{A(\xi_2''-\xi_2')}{\nabla\Phi(\xi_2'+\xi_1')-\nabla\Phi(\eta_2)}=\inner{A(\xi_2''-\xi_2')}{A'(\xi_2'+\xi_1'-\eta_2)};
\end{equation*}
hence, the inner product is basically equal to $\inner{\eta_1-\eta_2}{\eta_1'-\eta_2'}$ for all the pairs $\eta_1,\eta_1'\in\pi(S_1)$ and $\eta_2,\eta_2'\in\pi(S_2)$, and $\abs{\inner{\eta_1-\eta_2}{\eta_1'-\eta_2'}}\ge c>0$, then $\tilde{P}$ is uniformly transversal to all the tubes coming from $S_2$. The estimates hold uniformly in $\varepsilon\ll 1$.

The case of the hemi-sphere is similar. The term \eqref{eq:delta_delta} is almost as simple as for the paraboloid. By symmetry, we can assume that $\xi_2'=-ae_1$ and $\xi_2''=ae_1$ for some $0<a\le\frac{1}{\sqrt{2}}+\frac{1}{10}$. Then, the $(n-2)$-surface $\tilde{H}$ is again a hyperplane $H$ with normal direction $e_1$ that passes through $\xi_2'$. The cone $\tilde{P}$ is a translation of a portion of the quadratic cone $\{\xi\mid \xi_1^2=a^2\abs{\xi}^2\}$. It is intuitively clear that the portion of the cone generated by direction from $S_1$ is uniformly transversal to tubes from $S_2$.

\subsection*{Notations}
 
\begin{itemize}
\item Relations: $A\lesssim_\epsilon B$ if $A\le C_\epsilon B$; $A\sim B$ if $A\lesssim B\lesssim A$; $A\ll 1$ if $A\le c$, where $c$ is chosen sufficiently small.
\item Various: $e(z):=e^{2\pi i z}$, $\japan{x}=(1+\abs{x}^2)^\frac{1}{2}$, $B_r(x)$ a ball of radius $r$ with center at $x$. $\dashint_M\,d\tau := \frac{1}{M}\int_M^{2M}\,d\tau$. $a+:=a+\varepsilon$ for $\varepsilon\ll 1$. $\abs{E}$ is the Lebesgue measure of a set $E\subset\Rs^n$, or the cardinality of a finite set $E$. If $T$ is a tube with main axis $l$, then $AT$ is a dilation of $T$ by a factor $A>0$ and same main axis $l$.
\item Multipliers: $m(D)f = (m\widehat{f})^\vee$, where $m$ stands for \textit{multiplier}; $Pf = m(D)f$, where $m$ is a smooth cut-off for a set of frequencies where we want to project to.
\item The operator $\Delta_\zeta:=\Delta + \zeta\cdot\nabla$ has symbol $p_\zeta(\xi):=-\abs{\xi}^2+2i\zeta\cdot\nabla$ and characteristic $\Sigma_\zeta:=\{\xi\mid p_\zeta(\xi)=0\}$.
\item $\zeta(U,\tau):=\tau(Ue_1-iUe_2)$, where $\{e_i\}$ is the canonical basis, $\tau\ge 1$ and $U\in O_d$ is a rotation.
\item $\norm{u}_{\dot{X}^b_\zeta}^2:= \int \abs{p_\zeta(\xi)}^{2b}\abs{\widehat{u}(\xi)}^2\,d\xi$.
\item $\norm{u}_{X^b_{\zeta,\sigma}}^2:= \int (\abs{p_\zeta(\xi)}+\sigma)^{2b}\abs{\widehat{u}(\xi)}^2\,d\xi$ for $\sigma>0$; $\norm{u}_{X^b_\zeta}=\norm{u}_{X^b_{\zeta,\abs{\zeta}}}$.
\item Sobolev-Slobodeckij spaces: For $1\le p<\infty$, $W^{s,p}(\Rs^d)$ is the space of distributions $f$ such that 
\begin{align}
\norm{f}_{s,p}&:= \sum_{\abs{\alpha}\le s}\norm{D^\alpha f}_p\quad \text{for } s \text{ integer.} \\
\norm{f}_{s,p}&:= \norm{P_{\le 1}f}_p+\Big(\sum_{k>0}2^{skp}\norm{P_kf}_p^p\Big)^\frac{1}{p}<\infty\quad \text{for } 0<s\neq \text{integer.}
\end{align}
For a domain $\Omega\subset\Rs^d$, we define $W^{s,p}(\Omega):=\{f|_\Omega \mid f\in W^{s,p}(\Rs^d)\}$. The space $\accentset{\circ}{W}^{s,p}(\Omega)$ is the completion in $W^{s,p}(\Rs^d)$ of test functions $D(\Omega):=\{\varphi\in C^{\infty}(\Omega)\mid \supp \varphi\Subset\Omega\}$. For further details, see \textit{e.g.} \cite{MR781540,MR884984}.
\item $\ext{f}(x):=\int_{\Rs^{n-1}} f(\xi)e(\inner{x'}{\xi}+x_n\varphi(\xi))\,d\xi$, where $S$ is the graph of $\varphi$ and $(x',x_n)\in\Rs^n$.
\end{itemize}

\bibliographystyle{plain}
\bibliography{../Calderon_Problem,../Restriction}

\end{document}